\newtheorem{remark}{Remark}[section]
\newtheorem{theorem}{Theorem}[section]
\newtheorem{lemma}{Lemma}[section]
\newtheorem{corollary}{Corollary}[section]
\numberwithin{equation}{section}
\numberwithin{figure}{section}
\title{Pointwise error estimates and local superconvergence of Jacobi expansions}
\author{Shuhuang Xiang\thanks{School of Mathematics and Statistics, INP-LAMA, Central South University, Changsha 410083,  P. R. China, xiangsh@csu.edu.cn} \and Desong Kong\thanks{School of Mathematics and Statistics, Central South University, Changsha 410083, P. R. China, desongkong@csu.edu.cn} \and Guidong Liu\thanks{Corresponding author, School of  Statistics and Mathematics, Nanjing Audit University, Nanjing 211815, P. R. China, csu\_guidongliu@163.com} \and Li-Lian Wang\thanks{Division of Mathematical Sciences, School of Physical and Mathematical Sciences, Nanyang Technological University, 637371, Singapore,  lilian@ntu.edu.sg}}
\begin{document}

\maketitle

\begin{abstract} As one myth of polynomial interpolation and quadrature, Trefethen \cite{Trefethen} revealed that the Chebyshev
interpolation of $|x-a|$ (with $|a|<1 $) at  the Clenshaw-Curtis points exhibited a much smaller error than the best polynomial approximation (in the maximum norm)
in about $95\%$ range of $[-1,1]$ except for a small neighbourhood near the singular point $x=a.$
In this paper, we rigorously show that the Jacobi expansion for a more general class of   $\Phi$-functions also enjoys such a local convergence behaviour.   Our assertion  draws on the pointwise error estimate using  the reproducing kernel of Jacobi polynomials
and  the Hilb-type formula on the asymptotic of the Bessel transforms.  We also study the local superconvergence
and show the gain in order and the subregions it occurs. As a by-product of this new argument, the undesired $\log n$-factor in the pointwise error estimate for the Legendre expansion recently stated in
Babu\u{s}ka and  Hakula \cite{Babuska2019} can be removed.  Finally, all these estimates are extended to the functions with boundary singularities. We provide ample numerical evidences to demonstrate the optimality and sharpness of the estimates.
\end{abstract}

{\bf Keywords:} Pointwise error analysis, superconvergence, asymptotic, Bessel transform,  Jacobi polynomial,  spectral expansion
\vspace{0.05in}

{\bf AMS subject classifications:}  41A10, 41A25,  41A50, 65N35, 65M70

\section{Introduction}
\label{introduction}
Approximation by polynomials  plays a fundamental role in algorithm development and  numerical analysis of  many computational methods. It is known that for a given continuous function $f(x)$ defined on $[-1,1]$, the best polynomial approximation  of $f(x)$  in the maximum norm is  a unique  polynomial $p_n^*\in {\cal P}_n$ (denotes the set of polynomials of degree  at most $n$) that minimizes
\begin{equation*}
	\|f-p_n^*\|_{\infty}=\min_{p\in {\cal P}_n}\|f-p\|_{\infty}.
\end{equation*}
The best polynomial approximation $p_n^*(x)$ is optimal, but its computation is  nontrivial for a general nonlinear  function $f(x)$\,\cite{Trefethen1}. In fact, Trefethen \cite{Trefethen} pointed out that for $f(x)=|x-\frac{1}{4}|$, the pointwise   error $|f(x)-p_n(x)|$ by the  polynomial interpolant  $p_n$  at  the Clenshaw-Curtis points $\{x_j=\cos(\frac{j\pi}{n})\}_{j=0}^n$ is much smaller than that of the best polynomial: $|f(x)-p_n^*(x)|$  for most values of $x,$ except for a small subinterval centred around the singular point $x=\frac{1}{4}$  (see Fig.\! \ref{figure11} (left) for an illustration of  $n=100$).

\begin{figure}[hpbt]
\centerline{\includegraphics[height=4.5cm,width=14.5cm]{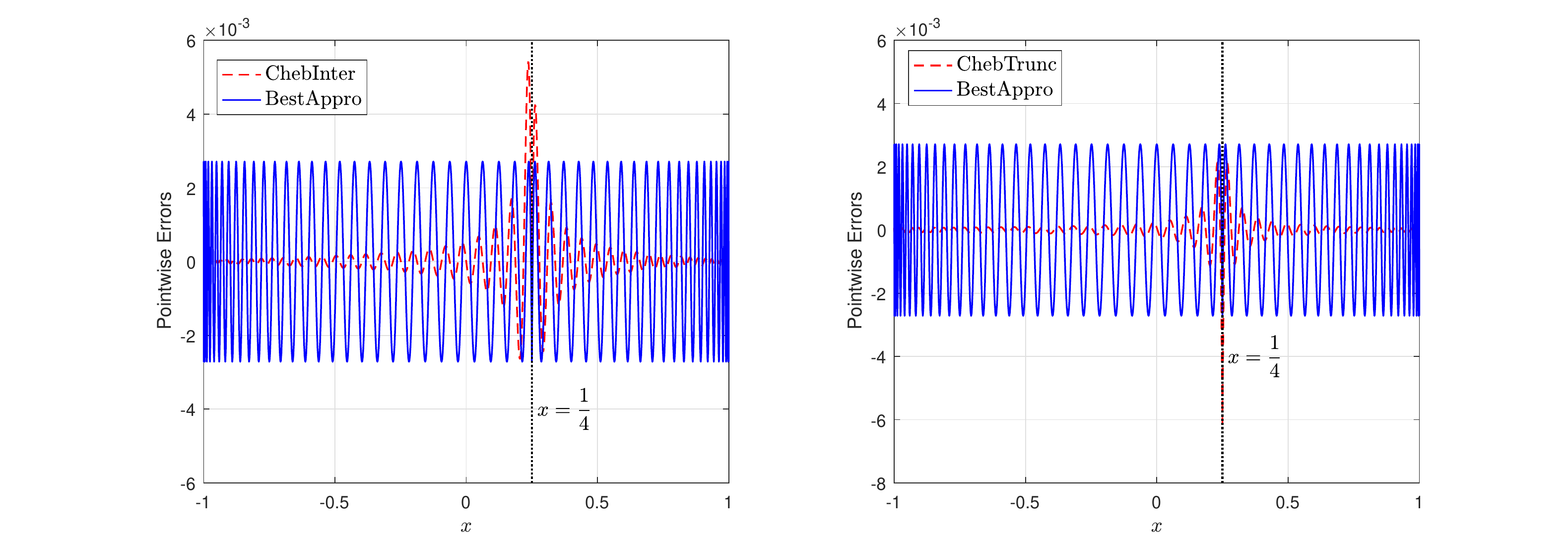}}
\vspace*{-10pt}
\caption{\small Pointwise error curves of the best polynomial approximation $f(x)-p_n^*(x)$, Chebyshev interpolation $f(x)-p_n(x)$ (left), and Chebyshev truncation $f(x)-S_{n}^{(-\frac{1}{2},-\frac{1}{2})}[f](x)$ (right), where $n=100$.}\label{figure11}
\end{figure}

Needless to say,  the pointwise error is a very useful  indication of  the approximability and approximation quality  of a numerical tool in solving partial differential equations \cite{BaGuo3,CSYZ,CSYZ2,CZZ,CCS,LZ}.  In the past several decades,  the error estimates of spectral approximation in Sobolev norms   has been intensively studied and well-documented in e.g., \cite{BaGuo1,BaGuo2,LWL,STW,Trefethen1,Tuan}. However, whenever possible,  one would wish to estimate
the pointwise error of the approximation \cite{Babuska2019,LWW,Tuan,Wang1,Zhang,ZhangZ,ZZ}, though it is usually more challenging.

Compared with the aforementioned Chebyshev interpolation $p_n(x)$,   the pointwise error $f(x)-S^{(-\frac 1 2,-\frac 12)}_n[f](x)$ of  the Chebyshev spectral projection $S^{(-\frac 1 2,-\frac 12)}_n[f]$ of $f(x)=|x-\frac{1}{4}|$  is much smaller than the best polynomial approximation $f(x)-p_n^*(x)$ except for the subinterval near  the  singular point  $x=\frac{1}{4}$, and it is more localized than $f(x)-p_n(x)$ near the singularity ( see  Fig.\! \ref{figure11} (right)). This  implies some  underlying local superconvergence at the points slightly away from  the singularity.

\begin{figure}[hpbt]
\centerline{\includegraphics[height=4.5cm,width=14.5cm]{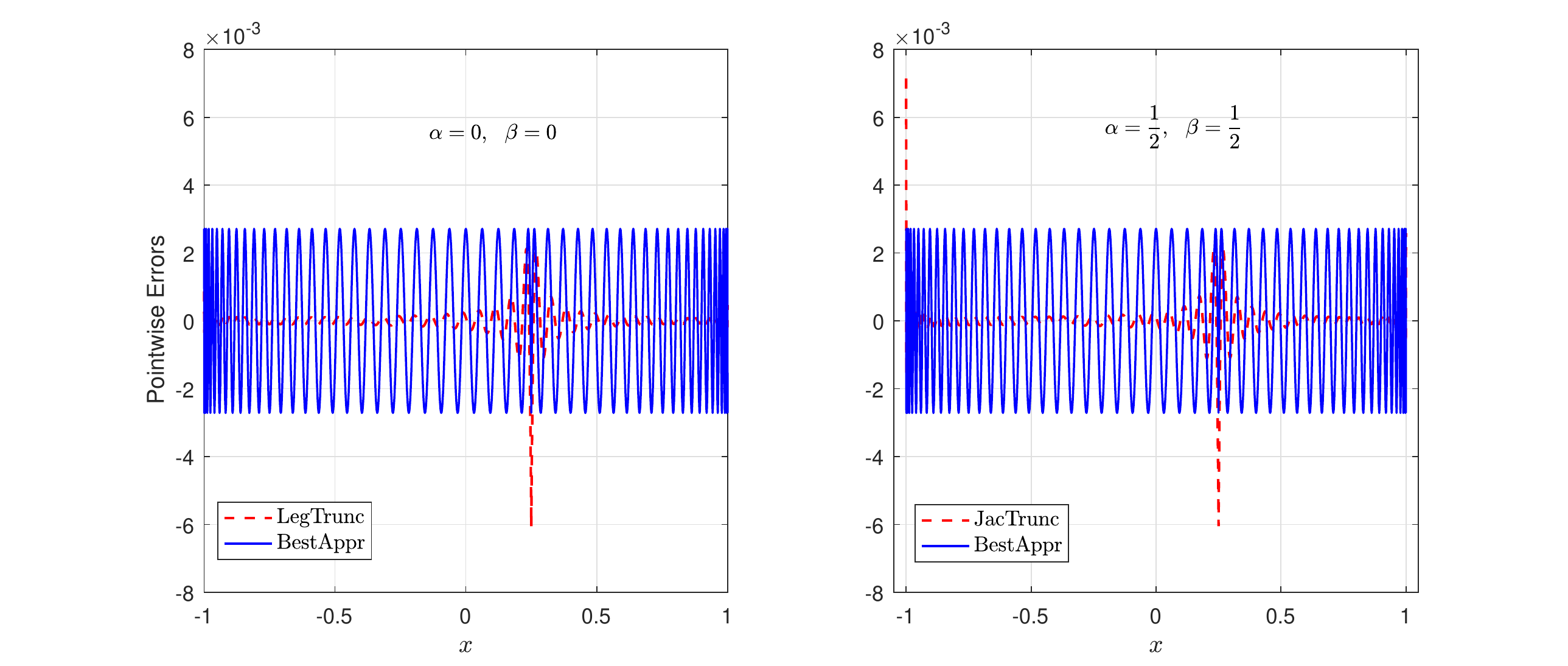}}
\vspace*{-10pt}
\caption{\small  Pointwise error curves of the best polynomial approximation  $f(x)-p_{n}^{*}(x)$ and the Jacobi truncation $f(x)-S_{n}^{(\alpha,\beta)}[f](x)$  with $\alpha=\beta=0$ (left) and $\alpha=\beta=\frac 12$  (right),  where $n=100$.}\label{figure12}
\end{figure}

Interestingly, such a superconvergence phenomenon also occurs in the Legendre and more general Jacobi expansions (except for
an additional  small neighbourhood of the endpoints $x=\pm 1$, see Fig.\!\! \ref{figure12}). In fact, we are not the first to unfold this  convergence behaviour.  In a recent work,  Babu\u{s}ka and  Hakula \cite{Babuska2019} provided deep insights into this phenomenon for the Legendre expansion of the class of $\Phi$-functions defined by

\begin{equation}\label{pfun}
	f(x)=(x-a)_+^\lambda=\begin{dcases}
	0,&-1\le x<a,\\
	(x-a)^{\lambda},&a<x\le 1,
	\end{dcases}
	\quad\,\lambda>-1,\quad a\in(-1,1),
\end{equation}
which  appears frequently in various applications \cite{Babuska2019}. More precisely, define the truncated Legendre series and the pointwise error as
\begin{equation}\label{elpfun}
    S_n^{(0,0)}[f](x)=\sum_{k=0}^na_kP_k(x),\quad e_f(n,x)=\big|f(x)-S_n^{(0,0)}[f](x)\big|,
\end{equation}
where $P_k(x)$ is the Legendre polynomial of degree $k$ as in \cite{Szego} and  $\{a_k\}$ are the Legendre expansion coefficients.
Taking into account  the convergence rates on the piecewise analytic functions in Saff and Totik \cite{st}, Babu\u{s}ka and  Hakula \cite{Babuska2019} derived the following estimates.
\begin{theorem}[see \cite{Babuska2019}]\label{Thm11}
    Let $f(x)$  be a $\Phi$-function defined by \eqref{pfun} with $\lambda=0$, i.e., a step function.
    \begin{itemize}
    \item[{\rm(i)}]  For $x\in (-1,a)\cup (a,1)$, we have $e_f(n,x)\le C(x)n^{-1}$, where  $C(x)$ is independent of $n$ and has the behaviours near $x=a,\pm 1$ as follows
    \begin{equation}\label{Cpmxi}
        C(-1+ \xi)\le D(-1)\,\xi^{-\frac 14},\quad C(1- \xi)\le D(1)\,\xi^{-\frac 1 4},\quad  C(a\pm \xi)\le D(a)\,\xi^{-1},
    \end{equation}
    for $0<\xi\le \delta,$ where $D(\pm 1), D(a)>0$ and $\delta>0$ are independent of $n.$
    \smallskip
    \item[{\rm (ii)}]  At  $x=\pm 1, a$,  we have
    \begin{equation}\label{Cpmapm1}
        e_f(n,\pm 1)\le Cn^{-\frac{1}{2}},\quad  e_f(n, a)\le Cn^{-1}.
    \end{equation}
    \end{itemize}
\end{theorem}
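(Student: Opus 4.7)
The plan is to work directly with the Fourier--Legendre coefficients and a Hilb-type asymptotic for $P_k(\cos\theta)$. First, I would compute the coefficients in closed form. For the step function $f=(x-a)_+^0$ and $k\ge 1$,
\begin{equation*}
a_k=\frac{2k+1}{2}\int_a^1 P_k(t)\,dt=\frac{1}{2}\bigl(P_{k-1}(a)-P_{k+1}(a)\bigr),
\end{equation*}
using $(2k+1)P_k=P_{k+1}'-P_{k-1}'$ and $P_k(1)=1$. With $a=\cos\psi$, $x=\cos\phi$, I would then invoke the Darboux/Hilb asymptotic
\begin{equation*}
P_k(\cos\theta)=\sqrt{\tfrac{2}{\pi k\sin\theta}}\cos\!\bigl((k+\tfrac12)\theta-\tfrac{\pi}{4}\bigr)+O(k^{-3/2})
\end{equation*}
(uniformly for $\theta$ in compact subsets of $(0,\pi)$) and apply a sum-to-product identity to obtain
\begin{equation*}
a_k=\sqrt{\tfrac{2\sin\psi}{\pi k}}\,\sin\!\bigl((k+\tfrac12)\psi-\tfrac{\pi}{4}\bigr)+O(k^{-3/2}),
\end{equation*}
so that each coefficient gains an extra $k^{-1/2}$ factor thanks to the cancellation $P_{k-1}-P_{k+1}\sim 2\sin\psi\cdot P_k'$-type behavior.

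Next, for $x\in(-1,a)\cup(a,1)$ and $x\neq\pm1$, I would express $a_kP_k(x)$ via product-to-sum, schematically
\begin{equation*}
a_kP_k(x)\approx\frac{1}{\pi k}\sqrt{\tfrac{\sin\psi}{\sin\phi}}\Bigl[-\cos\!\bigl((k+\tfrac12)(\psi+\phi)\bigr)+\sin\!\bigl((k+\tfrac12)(\psi-\phi)\bigr)\Bigr],
\end{equation*}
plus an $O(k^{-2})$ remainder whose tail is trivially $O(1/n)$. The sum $\sum_{k>n}k^{-1}\cos((k+\tfrac12)(\psi+\phi))$ is handled by Abel summation since $\psi+\phi\in(0,2\pi)$ stays away from multiples of $2\pi$, producing $O(1/n)$ with a constant controlled by $|\sin((\psi+\phi)/2)|^{-1}$. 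The delicate term is $\sum_{k>n}k^{-1}\sin((k+\tfrac12)(\psi-\phi))$: when $|\psi-\phi|$ is small this is the ``resonant'' regime, and a Dirichlet-kernel estimate yields a bound of order $\min(1,(n|\psi-\phi|)^{-1})$. Combining, $e_f(n,x)\lesssim n^{-1}(\sin\phi\sin\psi)^{-1/2}|\psi-\phi|^{-1}$, from which (i) of the theorem follows: near $a$ one has $|\psi-\phi|\sim\xi/\sqrt{1-a^2}$ giving $C(x)\lesssim\xi^{-1}$, while near $\pm1$ the prefactor $(\sin\phi)^{-1/2}\sim(1-x^2)^{-1/4}\sim\xi^{-1/4}$ dominates.

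For part (ii), the interior point $x=a$ corresponds to $\psi-\phi=0$, which simply kills the resonant sine term; only the harmless $\cos((k+\tfrac12)\cdot2\psi)$ tail remains and yields $e_f(n,a)=O(n^{-1})$. At $x=\pm1$ the asymptotic for $P_k(x)$ is not available, but $P_k(\pm1)=(\pm1)^k$ keeps the full $k^{-1/2}$ size of $a_k$, so I would directly bound
\begin{equation*}
\sum_{k=n+1}^\infty a_k(\pm1)^k=\sum_{k=n+1}^\infty(\pm1)^k\sqrt{\tfrac{2\sin\psi}{\pi k}}\sin\!\bigl((k+\tfrac12)\psi-\tfrac{\pi}{4}\bigr)+O(n^{-1/2})
\end{equation*}
by Abel summation, using that the partial sums of the oscillating factor are uniformly bounded (as $\psi\in(0,\pi)$); this gives the advertised $O(n^{-1/2})$ rate at the endpoints.

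The main obstacle I expect is the uniformity of all constants, in particular ensuring that the $\xi^{-1}$ behavior near $x=a$ does not accidentally inherit a spurious $\log n$ factor from naive bounding of $\sum_{k>n}k^{-1}|\sin((k+1/2)\gamma)|$. This is precisely the subtlety highlighted in the abstract, and the remedy is to keep the oscillating structure throughout and apply a second summation by parts, trading one factor of $k$ for the frequency $\gamma=\psi-\phi$, so that the bound $(n\gamma)^{-1}$ arises cleanly in place of the worst-case $O(\log n/n)$.
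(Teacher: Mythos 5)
Your route---closed-form coefficients $a_k=\tfrac12\bigl(P_{k-1}(a)-P_{k+1}(a)\bigr)$, the Darboux asymptotic, and Abel summation on the oscillatory tail---is genuinely different from the paper's, which never sums the tail at all: Theorem \ref{thm1} collapses the error into the two terms $A_n^{(0,0)}a_n^{(0,0)}(x;g)P_{n+1}(x)-B_n^{(0,0)}a_{n+1}^{(0,0)}(x;g)P_n(x)$ via the Christoffel--Darboux kernel, then bounds $a_n^{(0,0)}(x;g)$ by $|x-a|^{-1}\mathcal O(n^{-1/2})$ uniformly in $x$ (Theorem \ref{Thm32}, via the Bessel-transform lemmas) and $P_n$ by Corollary \ref{Bineua}. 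Your computation of $a_k$, the product-to-sum splitting, the $\min\bigl(1,(n|\psi-\phi|)^{-1}\bigr)$ bound on the resonant sum (correctly avoiding the spurious $\log n$), and the treatment of $x=a$ and $x=\pm1$ are all sound, and they deliver the $\mathcal O(n^{-1})$ interior rate, the $\xi^{-1}$ growth near $a$, and both rates in (ii).

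The gap is in the claimed endpoint behaviour $C(1-\xi)\le D(1)\,\xi^{-1/4}$. You invoke the trigonometric asymptotic ``uniformly on compact subsets of $(0,\pi)$'', but $C(x)$ must be controlled as $x\to\pm1$, where that asymptotic degrades: the remainder in $P_k(\cos\phi)=\sqrt{2/(\pi k\sin\phi)}\cos\bigl((k+\tfrac12)\phi-\tfrac{\pi}{4}\bigr)+R_k(\phi)$ satisfies only $R_k(\phi)=\mathcal O\bigl(k^{-3/2}(\sin\phi)^{-3/2}\bigr)$, which is what the Hilb formula \eqref{Hilb} combined with the large-argument expansion of $J_0$ yields. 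Feeding this into $\sum_{k>n}|a_k|\,|R_k(\phi)|$ gives $\mathcal O\bigl(n^{-1}(\sin\phi)^{-3/2}\bigr)=\mathcal O(n^{-1}\xi^{-3/4})$, i.e.\ $C(1-\xi)\lesssim\xi^{-3/4}$, weaker than the stated $\xi^{-1/4}$. To recover $\xi^{-1/4}$ you must not expand $J_0(\tilde N\phi)$ into a cosine near the endpoints: either keep the Bessel form and use a van der Corput-type estimate for Bessel transforms (precisely the role of Lemmas \ref{vdc} and \ref{van der Corput Jacobi} here), or perform a further summation by parts on the oscillatory remainder and handle the range $\xi\lesssim n^{-2}$ separately with $|P_k|\le1$. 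As written, the plan does not address this transition region, so that part of \eqref{Cpmxi} is not established. (A minor point: writing $e_f(n,x)=|\sum_{k>n}a_kP_k(x)|$ presupposes pointwise convergence of the Legendre series to $f$, including to the mean value at the jump; this is classical but should be justified or cited.)
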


     In \eqref{Cpmapm1} and what follows,  we denote by  $C$   a generic positive constant independent of $n$ which may have a different value  in a different context.

 Following Wahlbin \cite{Wahlbin} and Bary \cite{Bary}, Babu\u{s}ka and  Hakula \cite{Babuska2019}  further obtained the following estimates  for
  $\lambda\not=0$.
 \smallskip
\begin{theorem}[see \cite{Babuska2019}]\label{Thm12}
Let $f(x)$  be a $\Phi$-function defined by \eqref{pfun} with $\lambda>-1$ but $\lambda\not=0$.
\begin{itemize}
\item[{\rm(i)}]  For $x\in (-1,a)\cup (a,1)$, we have
\begin{equation}\label{logfactor}
e_f(n,x)\le C(x)n^{-\lambda-1}\log n,
\end{equation}
where $C(x)>0$ is independent of $n,$ and has the same behaviour as $C(x)$ in \eqref{Cpmxi}.
\smallskip
\item[{\rm(ii)}]  At $x=\pm1, a$,  we have
\begin{equation}\label{Cpmapmlbda}
e_f(n,\pm 1)\le Cn^{-\lambda-\frac{1}{2}}\log n, \quad  e_f(n, a)\le
\begin{dcases}
  Cn^{-\lambda-1}\log n, & \mbox{$\lambda$ {\rm even},}\\[2pt]
  Cn^{-\lambda}\log n,   & \mbox{$\lambda>0$ {\rm and non-even}}.
\end{dcases}
\end{equation}
\end{itemize}
\end{theorem}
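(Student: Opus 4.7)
The plan is to estimate the tail $R_n(x):=\sum_{k=n+1}^\infty a_k P_k(x)$ of the Legendre expansion, where
$$a_k=\bigl(k+\tfrac12\bigr)\int_a^1(x-a)^\lambda P_k(x)\,dx.$$
The first step is to establish a sharp asymptotic for $a_k$. Writing $x=\cos\theta$, $a=\cos\phi_a$ with $\phi_a\in(0,\pi)$, and inserting the Hilb/Bessel-type asymptotic of $P_k(\cos\theta)$, the integral becomes an oscillatory Bessel-type transform on $[0,\phi_a]$ with an integrable endpoint singularity $(\cos\theta-\cos\phi_a)^\lambda$. A stationary-phase/van der Corput argument, whose only critical contribution sits at the endpoint $\theta=\phi_a$, should yield
$$a_k=A_\lambda(\phi_a)\,k^{-\lambda-\frac12}\cos\bigl((k+\tfrac12)\phi_a+\psi_\lambda\bigr)+O(k^{-\lambda-\frac32}),$$
with amplitude $A_\lambda$ carrying an algebraic weight in $\sin\phi_a$ that ultimately produces the blow-up of $C(x)$ as $a\to\pm1$.

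For part (i), with $x=\cos\theta_x\in(-1,a)\cup(a,1)$, I would substitute the classical Darboux asymptotic
$$P_k(\cos\theta_x)=\sqrt{\tfrac{2}{k\pi\sin\theta_x}}\,\cos\bigl((k+\tfrac12)\theta_x-\tfrac{\pi}{4}\bigr)+O(k^{-\frac32})$$
into $R_n(x)$ and apply the product-to-sum identity $2\cos A\cos B=\cos(A-B)+\cos(A+B)$. Because $\theta_x\neq\phi_a$, the resulting frequencies $\phi_a\pm\theta_x$ are bounded away from $0\bmod2\pi$, so Abel summation bounds each oscillatory tail $\sum_{k>n}k^{-\lambda-1}e^{i(k+\frac12)\gamma}$ by $O\bigl(n^{-\lambda-1}|\sin(\gamma/2)|^{-1}\bigr)$. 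The lower-order $O(k^{-3/2})$ remainders contribute at most $\sum_{k>n}k^{-\lambda-2}=O(n^{-\lambda-1})$, except for a non-oscillatory residual inherited from the uniform error in the Hilb expansion, which behaves like $\int_n^\infty t^{-\lambda-2}\log t\,dt$ and supplies the $\log n$ factor in \eqref{logfactor}. The weights in $A_\lambda$ and in Darboux's formula then combine to reproduce the $\xi^{-1}$ and $\xi^{-1/4}$ singular behaviour of $C(x)$ in \eqref{Cpmxi}.

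For part (ii) the three special points must be treated by hand, because either the Darboux asymptotic breaks down or the difference frequency degenerates. At $x=\pm1$, $P_k(\pm1)=(\pm1)^k$ lacks the $k^{-1/2}$ gain, so $R_n(\pm1)=\sum_{k>n}k^{-\lambda-\frac12}(\pm1)^k\cos\bigl((k+\tfrac12)\phi_a+\psi_\lambda\bigr)+\ldots$; the combined phase never degenerates for $a\in(-1,1)$, and one Abel summation produces the asserted $O(n^{-\lambda-\frac12}\log n)$. At $x=a$ the difference frequency $\theta_x-\phi_a$ vanishes, so that cosine piece loses its oscillatory cancellation and its mean value $\tfrac12\cos\psi_\lambda$ dominates; a direct bound yields $n^{-\lambda}\log n$, unless $\cos\psi_\lambda=0$, which via $\Gamma$-function connection identities happens precisely when $\lambda$ is a non-negative even integer, and then only the non-degenerate sum piece survives and the rate improves to $n^{-\lambda-1}\log n$.

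The main obstacle will be establishing the asymptotic of $a_k$ with a remainder that is uniform in $\lambda$ and in $\phi_a$ on compact subsets of $(0,\pi)$, and pinning down the phase $\psi_\lambda$ explicitly, since $\psi_\lambda$ is precisely what drives the parity dichotomy in \eqref{Cpmapmlbda}. A secondary technical point is tracing where the $\log n$ factor is really born: it is a byproduct of using a one-term Hilb remainder rather than an intrinsic feature of the estimate, and, as the abstract announces, replacing this step by a reproducing-kernel pointwise representation is exactly what will remove it in the sequel.
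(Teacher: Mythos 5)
First, note that the paper does not actually prove Theorem~\ref{Thm12}: it is quoted verbatim from Babu\u{s}ka and Hakula \cite{Babuska2019}, whose argument runs through Wahlbin \cite{Wahlbin} and Bary's trigonometric-series machinery \cite{Bary} (this is where their $\log n$ is born, essentially as a Lebesgue-constant/localization artifact). Your route --- write the error as the tail $\sum_{k>n}a_kP_k(x)$, extract a two-term oscillatory asymptotic of $a_k$ from the Hilb formula, and kill the tail by Abel summation after a product-to-sum expansion --- is genuinely different from the cited proof, and is in fact much closer in spirit to what this paper does for its sharper Theorem~\ref{Thm13}: there the infinite tail is replaced outright by the two-term Christoffel--Darboux representation \eqref{Jtruerr}, and the coefficient asymptotics are handled by the van der Corput-type Lemmas \ref{vdc}--\ref{JEC}. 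If your program were completed it would deliver the log-free bounds of Theorem~\ref{Thm13} with $\alpha=\beta=0$, which of course implies \eqref{logfactor}--\eqref{Cpmapmlbda} a fortiori; your accounting of the singular weights ($\xi^{-1}$ near $a$ from $|\sin((\theta_x-\phi_a)/2)|^{-1}$, $\xi^{-1/4}$ near $\pm1$ from the Darboux amplitude) matches \eqref{Cpmxi} correctly.

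That said, there are concrete gaps. The crux is the asserted expansion $a_k=A_\lambda(\phi_a)k^{-\lambda-1/2}\cos((k+\tfrac12)\phi_a+\psi_\lambda)+O(k^{-\lambda-3/2})$ with an \emph{explicit} phase $\psi_\lambda$ and a remainder that stays summable after multiplication by $P_k(x)$; you defer this, yet the entire parity dichotomy at $x=a$ in \eqref{Cpmapmlbda} rests on evaluating $\cos(\psi_\lambda+\pi/4)$ and showing it vanishes exactly for even $\lambda$ (compare the cancellation mechanism the paper makes rigorous in \eqref{JacPoly}--\eqref{JacPoly1}, where the phase shift is visibly $\lambda\pi/2$). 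Without that lemma the proof of part (ii) at $x=a$ does not close. Second, your stated origin of the $\log n$ factor is spurious: the uniform Hilb remainder in \eqref{Hilb} is $O(\tilde N^{-3/2})$ with no logarithm, so the integral $\int_n^\infty t^{-\lambda-2}\log t\,\mathrm{d}t$ you invoke has no source in your own scheme; in your approach no $\log$ appears at all (harmless for an upper bound, but the mechanism you describe is not real). Finally, identifying $f(x)-S_n^{(0,0)}[f](x)$ with the tail sum requires pointwise convergence of the Legendre series at the point in question (including $x=\pm1$ and, for $-1<\lambda<0$, near the unbounded singularity), which should be stated rather than assumed.
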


\noindent Some remarks are in order.
\smallskip
\begin{itemize}
\item From ample delicate  numerical experiments, Babu\u{s}ka and  Hakula \cite{Babuska2019} conjectured  that the multiplicative factor $\log n$ in Theorem \ref{Thm12} seems to be a defect  of the analysis technique employed in the proof, and Theorem \ref{Thm12}
should   hold without the $\log n$ factor.  This  was  stated  as a hypothesis  and  claimed  ``in spite of many attempts, the hypothesis underlines the need for new theory" in \cite{Babuska2019}.
\smallskip
\item  It is worthy of mentioning that  Kruglov  extended the numerical study in \cite{Babuska2019} for the Legendre expansions
to the more general Jacobi polynomial cases 
in the master thesis \cite{Kruglov}, but the $\log$-term remained as a conjecture in the results therein.
\smallskip
\item It is seen from Theorem \ref{Thm12} that if $\lambda$ is not an even integer,  we  have the superconvergence
\begin{equation}\label{newsuper}
 e_f(n,x)\le C(x) n^{-1}\log n\,  \|f-S_n^{(0,0)}[f]\|_\infty, 
\end{equation}
with  a gain of convergence rate   ${\cal O} (n^{-1}\log n)$ 
on any closed subinterval that excludes $x=a, \pm 1.$
\end{itemize}

The main purposes of this paper are twofold. Firstly, using a new technique, we shall show that the log-factor can be removed. Secondly,  we shall conduct  the optimal  pointwise convergence  and  superconvergence analysis for the Jacobi expansions of the following  generalised $\Phi$-functions
\begin{equation}\label{gpfun}
f(x)=z(x)\cdot\begin{dcases}
0,&-1\le x<a,\\
(x-a)^{\lambda},&a<x\le 1,\end{dcases} \quad a\in(-1,1),\quad\lambda>-1,
\end{equation}
and
\begin{equation}\label{gafun}
  f(x)=|x-a|^{\lambda}z(x) \;\;\;  (\lambda>-1\;\; \text{is not an even integer}),
\end{equation}
where we set in (\ref{gpfun}) $f(a)=0$ for $\lambda>0$ and $f(a)=\frac{z(a)}{2}$ for $\lambda=0$, and the given function $z(x)$ involved is assumed to be smooth with $z(a)\neq0$.
Denote the Jacobi expansion of $f(x)$ in (\ref{gpfun}) or (\ref{gafun}) and the pointwise error  by
\begin{equation}\label{sepAfun}
S_n^{(\alpha,\beta)}[f](x)=\sum_{k=0}^n a_{k}^{(\alpha,\beta)}P_k^{(\alpha,\beta)}(x),\;\;\;\;  e_f^{(\alpha,\beta)}(n,x)=|f(x)-S_n^{(\alpha,\beta)}[f](x)|,
\end{equation}
respectively, where $P_k^{(\alpha,\beta)}(x)$ is the Jacobi polynomial of degree $k$ and
\begin{equation}\label{eq:jacexpcoeffs0}
\begin{split}
  & a_{k}^{(\alpha,\beta)}=\frac{1}{\sigma_{k}^{(\alpha,\beta)}}\int_{-1}^{1}\!f(x)P_k^{(\alpha,\beta)}(x) \omega^{(\alpha,\beta)}(x)\mathrm{d}x,
 \quad  \omega^{(\alpha,\beta)}(x)=(1-x)^\alpha(1+x)^\beta,
  \\& \sigma_{k}^{(\alpha,\beta)}=\frac{2^{\alpha+\beta+1}\,\Gamma(k+\alpha+1)\Gamma(k+\beta+1)}{k!(2k+\alpha+\beta+1)\Gamma(k+\alpha+\beta+1)}.
  \end{split}
\end{equation}

Using the reproducing kernel of Jacobi polynomials, together with the Hilb-type formula and van der Corput-type Lemma on the asymptotic  of the Bessel transforms, we are able to  derive the following
main results.

\smallskip
\begin{theorem}\label{Thm13}
Let $f(x)$  be a generalised $\Phi$-function  defined in \eqref{gpfun} or \eqref{gafun}. Then for $\alpha,\beta >-1$ and  $\lambda>-1$, we have the following pointwise error estimates.
\begin{itemize}
\item[{\rm(i)}]  For $x\in (-1,a)\cup (a,1)$, we have
\begin{equation}\label{PerrJac100}
e_f^{(\alpha,\beta)}(n,x)\le C(x)n^{-\lambda-1},
\end{equation}
where  $C(x)$ is independent of $n$ and has the behaviours near $x=a,\pm 1$ as follows
\begin{equation}\label{PerrJac1}
\begin{split}
& C(-1+ \xi)\le D(-1)\,\xi^{-\max\{\frac \beta2+\frac 14,0\}},\quad C(1- \xi)\le D(1)\,\xi^{-\max\{\frac \alpha2+\frac 14,0\}},\\
&  C(a\pm \xi)\le D(a)\,\xi^{-1},\quad 0<\xi\le \delta.
\end{split}
\end{equation}
Here  $D(\pm 1), D(a)>0$ and $\delta>0$ are  independent of $n.$
\smallskip
\item[{\rm (ii)}]  At  $x=\pm 1$,  we have
\begin{equation}\label{PerrJac2}
e_f^{(\alpha,\beta)}(n,1)\le C n^{-\lambda+\alpha-\frac{1}{2}},\quad e_f^{(\alpha,\beta)}(n,-1)\le C n^{-\lambda+\beta-\frac{1}{2}}.
\end{equation}

\smallskip
\item[{\rm (iii)}]  At $x=a$ and for $\lambda>0$,  we have
\begin{equation}\label{PerrJac3}
\begin{split}
& e_f^{(\alpha,\beta)}(n, a)\le\begin{cases}
Cn^{-\lambda-1},&\lambda\ \text{even},\\
 Cn^{-\lambda}, &\text{otherwise},  \end{cases}\quad \mbox{for $f(x)$ defined by \eqref{gpfun}}; \\[2pt]
& e_f^{(\alpha,\beta)}(n, a)\le Cn^{-\lambda}\quad  \mbox{for $f(x)$ defined by  \eqref{gafun} and  non-even $\lambda$}.
\end{split}
 \end{equation}
\end{itemize}
\end{theorem}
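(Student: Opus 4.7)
The plan is to express the pointwise error as the tail series
\[f(x)-S_n^{(\alpha,\beta)}[f](x)=\sum_{k>n}a_k^{(\alpha,\beta)}\,P_k^{(\alpha,\beta)}(x),\]
and to extract sharp oscillatory asymptotics for both $a_k^{(\alpha,\beta)}$ and $P_k^{(\alpha,\beta)}(x)$ from the Hilb-type formula $(\sin\tfrac{\phi}{2})^\alpha(\cos\tfrac{\phi}{2})^\beta P_k^{(\alpha,\beta)}(\cos\phi)=N^{-\alpha}\tfrac{\Gamma(k+\alpha+1)}{k!}(\phi/\sin\phi)^{1/2} J_\alpha(N\phi)+R_k(\phi)$, with $N=k+(\alpha+\beta+1)/2$. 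After combining the two asymptotics via the product-to-sum identity, the tail collapses into a pair of oscillatory sums to which Abel summation applies. The companion tool is a van der Corput-type lemma for the Bessel transforms $\int_0^{\alpha_0}h(\phi)(\alpha_0-\phi)^\lambda J_\alpha(N\phi)\,d\phi$ quantifying the contribution of the endpoint singularity $\phi=\alpha_0$, where $a=\cos\alpha_0$.

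First, to obtain the coefficient asymptotics I would restrict the defining integral of $a_k^{(\alpha,\beta)}$ to the support $[a,1]$ of the $\Phi$-function, substitute $y=\cos\phi$, and insert the Hilb formula. Combining the classical expansion $J_\alpha(t)=\sqrt{2/(\pi t)}\cos(t-\tfrac{\alpha\pi}{2}-\tfrac{\pi}{4})+\mathcal{O}(t^{-3/2})$ with the endpoint identity $\int_0^\infty u^\lambda e^{iNu}du=\Gamma(\lambda+1)N^{-\lambda-1}e^{i(\lambda+1)\pi/2}$ and the normalization $\sigma_k^{-1}\sim k/2^{\alpha+\beta}$ yields
\[a_k^{(\alpha,\beta)}=\frac{A(a)}{k^{\lambda+1/2}}\cos(N\alpha_0+\Phi_\lambda)+\mathcal{O}(k^{-\lambda-3/2})\]
for explicit amplitude $A(a)$ and phase $\Phi_\lambda$ depending on $\lambda,\alpha,\beta$. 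The function in \eqref{gafun} is handled by splitting $|x-a|^\lambda z(x)=(x-a)_+^\lambda z(x)+(a-x)_+^\lambda z(x)$ and treating each singularity by the same method.

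Next, for $x=\cos\theta\in(-1,a)\cup(a,1)$, inserting the Hilb asymptotic $P_k^{(\alpha,\beta)}(\cos\theta)\approx k^{-1/2}B(\theta)\cos(N\theta+\Psi_\alpha)$ and applying $\cos A\cos B=\tfrac12[\cos(A-B)+\cos(A+B)]$ decomposes the tail into two oscillatory series
\[\sum_{k>n}\frac{c_\pm(x,a)}{k^{\lambda+1}}\cos(N(\theta\pm\alpha_0)+\chi_\pm),\]
both bounded via Abel summation as $\mathcal{O}(n^{-\lambda-1}/|\sin((\theta\pm\alpha_0)/2)|)$. This proves \eqref{PerrJac100} and recovers the $\xi^{-1}$ behaviour of $C(a\pm\xi)$, while the $\xi^{-\max\{\alpha/2+1/4,0\}}$ prefactor at $x=1$ (respectively $\beta$ at $x=-1$) comes from the gauge $(\sin(\theta/2))^{-\alpha-1/2}(\cos(\theta/2))^{-\beta-1/2}$ in the Hilb expansion of $P_k^{(\alpha,\beta)}(x)$. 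At $x=\pm 1$ I would instead use $P_k^{(\alpha,\beta)}(1)\sim k^\alpha/\Gamma(\alpha+1)$ and its counterpart at $-1$, multiply by the oscillatory coefficient asymptotic, and Abel-sum (the frequency $\alpha_0\in(0,\pi)$ is always nonzero) to obtain \eqref{PerrJac2}. At $x=a$ the phase $\theta-\alpha_0$ vanishes: the $+\alpha_0$ sum stays $\mathcal{O}(n^{-\lambda-1})$, the $-\alpha_0$ sum reduces to the non-oscillatory $\sum_{k>n}\cos(\chi_-)/k^{\lambda+1}=\mathcal{O}(n^{-\lambda})$, and a parity computation shows that for $\lambda$ even the constant $\cos(\chi_-)$ vanishes, sharpening the bound to $\mathcal{O}(n^{-\lambda-1})$ as in \eqref{PerrJac3}.

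The main obstacle is the elimination of the $\log n$ factor of Babu\v{s}ka--Hakula. That factor arises when the Bessel-asymptotic remainders in the two Hilb expansions are bounded separately in magnitude, producing a harmonic sum $\sum k^{-1}$. The escape route is to keep the oscillation alive in both $a_k^{(\alpha,\beta)}$ and $P_k^{(\alpha,\beta)}(x)$, invoke the product-to-sum identity \emph{before} taking magnitudes, and only then apply Abel summation to the clean oscillatory sum $\sum_{k>n}k^{-\lambda-1}\cos(k\omega+\psi)$, whose bound $\mathcal{O}(n^{-\lambda-1}/|\sin(\omega/2)|)$ is free of logarithms. The technical heart is ensuring that the Hilb remainder $R_k(\phi)$ and the van der Corput remainder are uniform in $k$ and $x$, with explicit $x$-dependence near $x=a,\pm 1$; this uniformity is what makes the constants $D(\pm 1),D(a)$ in \eqref{PerrJac1} genuinely finite and independent of $n$.
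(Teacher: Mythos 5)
Your proposal is correct in outline but follows a genuinely different route from the paper. You represent the error as the full tail $\sum_{k>n}a_k^{(\alpha,\beta)}P_k^{(\alpha,\beta)}(x)$, derive a \emph{two-term oscillatory asymptotic} for the coefficients, combine it with the Hilb asymptotic of $P_k^{(\alpha,\beta)}(x)$ via product-to-sum, and Abel-sum the resulting series $\sum_{k>n}k^{-\lambda-1}\cos(N(\theta\pm\alpha_0)+\chi_\pm)$. The paper never sums over $k$ at all: the Christoffel--Darboux identity collapses the error into exactly two terms (Theorem \ref{thm1}, formula \eqref{Jtruerr}) involving the coefficients $a_n^{(\alpha,\beta)}(x;g)$ of the divided difference $g(x,y)=(f(x)-f(y))/(x-y)$, so it only needs one-sided \emph{upper bounds} on those coefficients (Theorem \ref{Thm32}) together with Muckenhoupt's uniform pointwise bound on $P_n^{(\alpha,\beta)}$ (Corollary \ref{Bineua}). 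Both strategies kill the $\log n$ of Babu\u{s}ka--Hakula, but for different reasons: yours because the oscillation is retained until after the product-to-sum step, the paper's because there is no infinite sum whose terms could pile up into a harmonic series. Your approach buys more---it yields the leading asymptotic constant and phase of the error, not just its order---at the price of needing sharper input; the paper's identity trades that precision for robustness (only $L^\infty$/BV-type control of $g$ is required).

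The places where your sketch still owes real work are precisely where the paper spends its technical effort. (a) The asymptotic $a_k^{(\alpha,\beta)}=A(a)k^{-\lambda-1/2}\cos(N\alpha_0+\Phi_\lambda)+\mathcal{O}(k^{-\lambda-3/2})$ with a remainder that is genuinely one full power better, uniformly in $k$, is the crux: the divergent ``endpoint identity'' $\int_0^\infty u^\lambda e^{\mathrm{i}Nu}\,\mathrm{d}u$ must be regularized, and the contribution of the endpoint $y=1$ (i.e.\ $\phi=0$, where $J_\alpha(N\phi)$ is not oscillatory) must be shown to be $\mathcal{O}(k^{-\lambda-3/2})$; the paper suppresses that endpoint by iterating the Rodrigues formula (Lemma \ref{JEC}) before invoking the van der Corput-type Lemma \ref{vdc}. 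Without an explicit second-order control of this type your remainder-times-leading cross terms could reintroduce a slower decay. (b) Near $x=\pm1$ the Hilb formula changes form for $\theta\lesssim n^{-1}$, so reading off the $\xi^{-\max\{\alpha/2+1/4,0\}}$ behaviour in \eqref{PerrJac1} from the gauge alone is not quite legitimate for $\alpha\le-\tfrac12$; you effectively need the uniform Muckenhoupt bound of Theorem \ref{Thm23} anyway. (c) The parity cancellation at $x=a$ for even $\lambda$ in \eqref{PerrJac3} is only asserted; the paper's version of this step (equations \eqref{Intcase}--\eqref{JacPoly1}) is a nontrivial computation. None of these is a fatal obstruction---the coefficient asymptotics you need are available in the spirit of Tuan--Elliott and Wang---but as written the proposal defers the hardest estimates to citations it does not make precise.
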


We emphasize  that all the above estimates are optimal in the sense that the convergence order can not be improved, which will be illustrated numerically in Section \ref{sec3}. As a special case, the multiplicative factor $\log n$ in Theorem \ref{Thm12} for the Legendre expansion is removed.  The asymptotic behavior of the pointwise error around the endpoints is described clearly. Indeed, we infer from (\ref{PerrJac1}) and (\ref{PerrJac2}) that $e_{f}^{(\alpha,\beta)}(n,x)$ achieves the best convergence rate around $x=\pm1$ when $\alpha,\beta\leq-\frac{1}{2}$. As an example, we consider  $f(x)=|x-a|$. It is known that the pointwise error of the best polynomial approximation equally oscillates  $N\ge n+2$ times and converges linearly as $n\to\infty$, i.e., there exist at least $N\ge n+2$ distinct  points $x_{1}, x_{2},\cdots, x_{N}$ on $[-1,1]$ such that
\begin{equation*}
  f(x_{i})-p_{n}^{*}(x_{i})=\varepsilon(-1)^{i}\|f-p_{n}^{*}\|_{\infty},\quad \|f-p_{n}^{*}\|_{\infty}\sim\sigma\sqrt{1-a^{2}}\,n^{-1},
\end{equation*}
where $\varepsilon=\pm1$ and $\sigma\approx1/2\sqrt{\pi}$ is the Bernstein constant (see \cite{Bernstein1,Trefethen1}). As a comparison, $e_{f}^{(\alpha,\beta)}(n,x)$ shares the same order of convergence $\mathcal{O}(n^{-1})$ at $x=a$, but somehow worse in magnitude than that of the best polynomial approximation. Nevertheless, superconvergence appears when $x\in(-1,a)\cup(a,1)$, where it follows from Theorem \ref{Thm13} that $e_{f}^{(\alpha,\beta)}(n,x)=\mathcal{O}(n^{-2})$ (also see   Fig.\! \ref{figure12}).

Incidentally, from the viewpoint of the maximum norm (i.e., the worst-case behavior of $e_{f}^{(\alpha,\beta)}(n,x)$), the Jacobi truncation $S_{n}^{(\alpha,\beta)}[f](x)$ ($\alpha,\beta\leq\frac{1}{2}$) performs as excellent as the best polynomial approximation in the sense of asymptotic rate when $n\to\infty$ for functions defined in (\ref{gpfun}) and (\ref{gafun}) where $\lambda>0$, that is,
\begin{equation}\label{maxer1}
 \|f-S_{n}^{(\alpha,\beta)}[f]\|_{\infty} =\begin{dcases}
 {\cal O}\big(n^{\max\{\alpha-\frac{1}{2},\beta-\frac{1}{2}\}-\lambda}\big),&  {\rm if}\; \max\{\alpha,\beta\}>\frac{1}{2}, \\[4pt]
{\cal O}\left(n^{-\lambda}\right),& {\rm if}\; \max\{\alpha,\beta\}\le\frac{1}{2}. \end{dcases}
\end{equation}

Taking the local behavior of $e_{f}^{(\alpha,\beta)}(n,x)$ around the boundaries $x=\pm1$ and singularity $x=a$ into consideration, we consider  a new weighted pointwise error function
\begin{equation}\label{weightedpointerror}
	\hat{e}_{f}^{(\alpha, \beta)}(n,x)=(1-x)^{\max\{\frac{\alpha}{2}+\frac{1}{4},0\}}(1+x)^{\max\{\frac{\beta}{2}+\frac{1}{4},0\}}(x-a)\,e_{f}^{(\alpha, \beta)}(n,x).
\end{equation}
Then we deduce from Theorem \ref{Thm13}  the uniform convergence order
\begin{equation}\label{weightedpointerrormax}
  \|\hat{e}_{f}^{(\alpha,\beta)}\|_{\infty}= {\cal O}(n^{-\lambda-1}),
\end{equation}
which also testifies to  the optimality of the estimates on $C(x)$ in (\ref{PerrJac1}). As a result, a global superconvergence is attained by $\hat{e}_{f}^{(\alpha,\beta)}(x)$, which gains one order higher in convergence rate than the best polynomial approximation (see Fig.\! \ref{figure13}).

\begin{figure}
\centerline{\includegraphics[height=5cm,width=16cm]{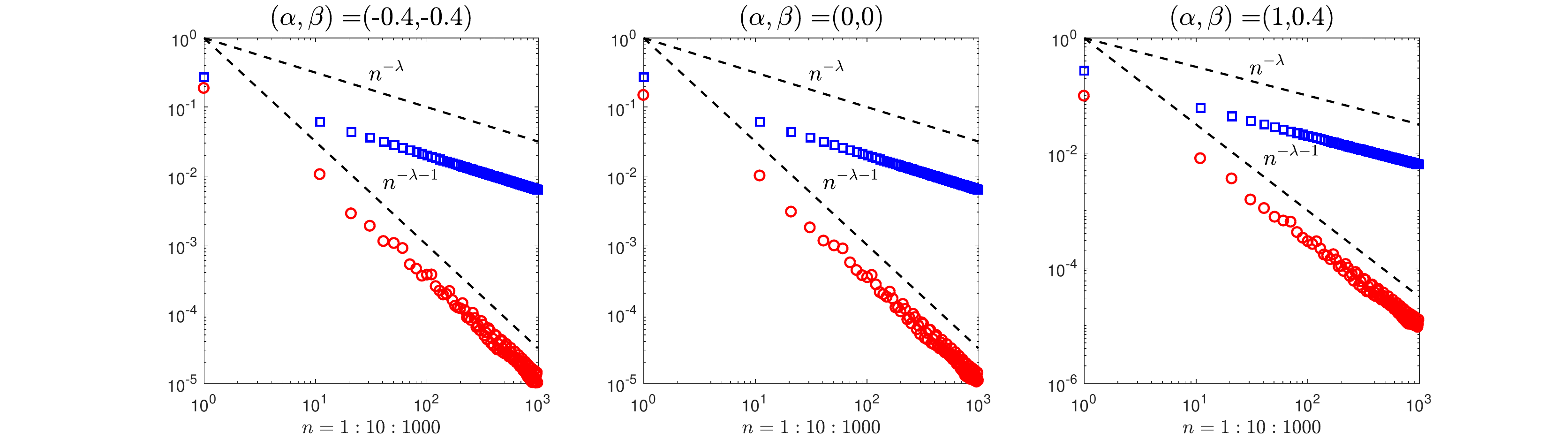}}
\caption{\small Comparison of Jacobi expansion and the optimal polynomial approximation to $f(x)=(x-1/4)_{+}^{1/2}$ by the convergence rates of $\|\hat{e}_{f}^{(\alpha,\beta)}\|_{\infty}$  {\rm(}red{\rm)} versus $\|f-p_{n}^{*}\|_{\infty}$ {\rm(}blue{\rm)}.}\label{figure13}	
\end{figure}

The rest of this paper is largely devoted to the proof of the results stated in Theorem \ref{Thm13}. In Section \ref{sec2}, we present the pointwise error formula and some asymptotic results on the Jacobi  polynomials.
Applying the Hilb-type formula and van der Corput-type Lemma for Bessel transforms, in Section \ref{sec3}, we prove the optimal pointwise error estimates for the Jacobi truncation on functions defined in (\ref{gpfun})-(\ref{gafun}).  Errors in maximum norm and weighted maximum norm (superconvergence analysis) are considered in Section \ref{sec4}. Finally we extend the analysis to functions with boundary singularities and conclude the paper with some remarks  in Section \ref{sec5}.

\section{The reproducing kernel and pointwise error formula}\label{sec2}

Let ${\rm d}\omega(x)$ be a given distribution in  the Stieltjes sense.
 Assume that $\left\{p_k\right\}_{k=0}^{\infty}$ with ${\rm deg}(p_k)=k$ is the  set of orthonormal polynomials
associated with ${\rm d}\omega(x):$
$$
\int_{-1}^{1}p_j(x)\,p_k(x)\,{\rm d} \omega(x)=\delta_{jk},\quad j,k=0,1,2,\cdots,
$$
where $\delta_{jk}$ is the Kronecker Delta symbol.
In view of the Christoffel-Darboux formula, the reproducing kernel $K_n(x,y)$ is defined by
\begin{equation}\label{kernel}
K_n(x,y)=\sum_{k=0}^n p_k(x)p_k(y)= \frac{\kappa_n}{\kappa_{n+1}}\frac{p_{n+1}(x)p_n(y)-p_{n}(x)p_{n+1}(y)}{x-y},\quad \lim_{n\rightarrow\infty} \frac{\kappa_n}{\kappa_{n+1}}=\frac{1}{2}
\end{equation}
(see \cite[Theorem 3.22]{Szego} and Lubinsky \cite{Lubinsky}), where $\kappa_n$ is the leading coefficient of $p_n(x)$. It is easy to verify from  the orthogonality  that
\begin{equation}\label{kernel2}
\int_{-1}^1K_n(x,y)q(y) \,{\rm d}\omega(y)=q(x), \quad \forall\, q\in {\mathcal P}_n.
\end{equation}

We intend to estimate the pointwise error of the Jacobi orthogonal projection $e_f^{(\alpha,\beta)}(n,x)$ defined in  \eqref{sepAfun}.
According to  Szeg\"{o} \cite[(4.5.2)]{Szego} and Hesthaven, Gottlieb and Gottlieb \cite[Theorem 4.4]{Hesthaven}, the reproducing kernel of the Jacobi polynomials can be represented as follows
\begin{equation}\label{Jkernel}
\begin{split}
& K_n(x,y) ={\displaystyle\sum_{k=0}^n \frac{1}{ \sigma_{k}^{(\alpha,\beta)}}P_k^{(\alpha,\beta)}(x)P_k^{(\alpha,\beta)}(y)}
={\displaystyle \rho_n^{(\alpha, \beta)}
\frac{P_{n+1}^{(\alpha,\beta)}(x)P_n^{(\alpha,\beta)}(y)-P_n^{(\alpha,\beta)}(x)P_{n+1}^{(\alpha,\beta)}(y)}{x-y}},
\end{split}
\end{equation}
where
\begin{equation}\label{rhoformula}
 \rho_n^{(\alpha, \beta)}:=\frac{2^{-\alpha-\beta}}{2n+\alpha+\beta+2}\cdot\frac{\Gamma(n+2)\Gamma(n+\alpha+\beta+2)}{\Gamma(n+\alpha+1)\Gamma(n+\beta+1)}.
\end{equation}
From \eqref{kernel2} with $q(x)\equiv 1$ and \eqref{Jkernel}, we can derive  the following pointwise error formula, which  plays a fundamental role in the error analysis.
\smallskip
\begin{theorem}\label{thm1} Let  $f(x)$ be a suitably smooth function on $[-1,1].$ For every $x\in [-1,1],$  we denote the Jacobi expansion coefficients of the following quotient in $y$  by
\begin{equation}\label{galpbet}
 a_{n}^{(\alpha,\beta)}(x;g):=\frac{1}{\sigma_{k}^{(\alpha,\beta)}}\int_{-1}^{1}\!g(x,y) P_n^{(\alpha,\beta)}(y) \omega^{(\alpha,\beta)}(y)\mathrm{d}y,\quad  g(x,y):=\frac{f(x)-f(y)}{x-y}.
\end{equation}
Then the pointwise error of  the Jacobi expansion has the compact representation
\begin{equation}\label{Jtruerr}
\begin{split}
f(x)- S_n^{(\alpha,\beta)}[f](x)
&=A_n^{(\alpha, \beta)} {a_{n}^{(\alpha,\beta)}(x;g)P_{n+1}^{(\alpha,\beta)}(x)}- B_n^{(\alpha, \beta)} {a_{n+1}^{(\alpha,\beta)}(x;g)P_n^{(\alpha,\beta)}(x)},
\end{split}
\end{equation}
where
\begin{equation}\label{haveA}
A_n^{(\alpha, \beta)}=\displaystyle \frac{2(n+1)(n+\alpha+\beta+1)}{(2n+\alpha+\beta+2)(2n+\alpha+\beta+1)},\;\;
B_n^{(\alpha, \beta)}=  \frac{2(n+\alpha+1)(n+\beta+1)}{(2n+\alpha+\beta+2)(2n+\alpha+\beta+3)}.
\end{equation}
\end{theorem}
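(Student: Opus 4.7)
The plan is to derive \eqref{Jtruerr} as a direct consequence of the reproducing kernel property together with the Christoffel-Darboux identity in \eqref{Jkernel}. First I would write the Jacobi projection as
\begin{equation*}
S_n^{(\alpha,\beta)}[f](x)=\int_{-1}^1 K_n(x,y)\,f(y)\,\omega^{(\alpha,\beta)}(y)\,\mathrm{d}y,
\end{equation*}
which follows by inserting the expansion coefficients \eqref{eq:jacexpcoeffs0} into \eqref{sepAfun} and recognising the sum as the first form of $K_n(x,y)$ in \eqref{Jkernel}. Next, applying \eqref{kernel2} with $q\equiv 1$ and the constant $f(x)$ in place of $q$ (with respect to the integration variable $y$) gives $f(x)=\int_{-1}^1 K_n(x,y)f(x)\omega^{(\alpha,\beta)}(y)\,\mathrm{d}y$, so that
\begin{equation*}
f(x)-S_n^{(\alpha,\beta)}[f](x)=\int_{-1}^1 K_n(x,y)\bigl[f(x)-f(y)\bigr]\,\omega^{(\alpha,\beta)}(y)\,\mathrm{d}y.
\end{equation*}

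Then I would insert the Christoffel-Darboux form \eqref{Jkernel}, which produces a factor $1/(x-y)$ that combines with $f(x)-f(y)$ to yield exactly the divided difference $g(x,y)$ from \eqref{galpbet}. Splitting the resulting integrand into two pieces and pulling the $x$-dependent polynomials outside gives
\begin{equation*}
f(x)-S_n^{(\alpha,\beta)}[f](x)=\rho_n^{(\alpha,\beta)}\Bigl\{P_{n+1}^{(\alpha,\beta)}(x)\!\int_{-1}^1\! g(x,y)P_n^{(\alpha,\beta)}(y)\omega^{(\alpha,\beta)}(y)\mathrm{d}y-P_n^{(\alpha,\beta)}(x)\!\int_{-1}^1\! g(x,y)P_{n+1}^{(\alpha,\beta)}(y)\omega^{(\alpha,\beta)}(y)\mathrm{d}y\Bigr\}.
\end{equation*}
Recognising each integral as $\sigma_k^{(\alpha,\beta)} a_k^{(\alpha,\beta)}(x;g)$ for $k=n$ and $k=n+1$ respectively (via the definition \eqref{galpbet}) gives \eqref{Jtruerr} with the identifications $A_n^{(\alpha,\beta)}=\rho_n^{(\alpha,\beta)}\sigma_n^{(\alpha,\beta)}$ and $B_n^{(\alpha,\beta)}=\rho_n^{(\alpha,\beta)}\sigma_{n+1}^{(\alpha,\beta)}$.

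The only remaining task is to simplify these two products into the closed forms in \eqref{haveA}. This is a routine Gamma-function calculation using $\Gamma(n+2)=(n+1)\,n!$, $\Gamma(n+\alpha+\beta+2)=(n+\alpha+\beta+1)\Gamma(n+\alpha+\beta+1)$ for $A_n^{(\alpha,\beta)}$, and analogous one-step recurrences $\Gamma(n+\alpha+2)=(n+\alpha+1)\Gamma(n+\alpha+1)$, $\Gamma(n+\beta+2)=(n+\beta+1)\Gamma(n+\beta+1)$, $(n+1)!=(n+1)\,n!$ for $B_n^{(\alpha,\beta)}$; the $2^{\pm(\alpha+\beta)}$ factors cancel in each case. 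I expect no conceptual obstacle; the only place one needs to be mildly careful is tracking the correct denominator $(2n+\alpha+\beta+3)$ in $B_n^{(\alpha,\beta)}$, which comes from the normalisation $\sigma_{n+1}^{(\alpha,\beta)}$ rather than $\sigma_n^{(\alpha,\beta)}$, and ensuring that the divided difference $g(x,y)$ is polynomial-integrable against the Jacobi weight, which holds for any $f$ smooth enough for the expansion coefficients in \eqref{eq:jacexpcoeffs0} to be well defined.
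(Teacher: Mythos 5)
Your proposal is correct and follows essentially the same route as the paper's own proof: kernel representation of the projection, the reproducing property applied to the constant function, insertion of the Christoffel--Darboux form of $K_n(x,y)$ to produce the divided difference $g(x,y)$, and the identifications $A_n^{(\alpha,\beta)}=\rho_n^{(\alpha,\beta)}\sigma_n^{(\alpha,\beta)}$, $B_n^{(\alpha,\beta)}=\rho_n^{(\alpha,\beta)}\sigma_{n+1}^{(\alpha,\beta)}$ simplified via one-step Gamma recurrences. No gaps.
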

\begin{proof}
From (\ref{eq:jacexpcoeffs0}) and (\ref{kernel})-(\ref{kernel2}), we find readily that
\[
\begin{split}
 S_n^{(\alpha,\beta)}[f](x)
={\displaystyle\sum_{k=0}^n\frac{P_k^{(\alpha,\beta)}(x)}{ \sigma_{k}^{(\alpha,\beta)}}\int_{-1}^1f(y)P_k^{(\alpha,\beta)}(y) \omega^{(\alpha,\beta)}(y) \mathrm{d}y}
={\displaystyle\int_{-1}^1K_n(x,y)f(y) \omega^{(\alpha,\beta)}(y) {\rm d} y},
\end{split}
\]
and
$$ f(x)=\int_{-1}^1K_n(x,y)f(x) \omega^{(\alpha,\beta)}(y){\rm d}y.$$
Thus, we obtain from \eqref{eq:jacexpcoeffs0} and  \eqref{Jkernel} that
\[
\begin{split}
f(x) &- S_n^{(\alpha,\beta)}[f](x)=\int_{-1}^1K_n(x,y)[f(x)-f(y)]\omega^{(\alpha,\beta)}(y) {\rm d}y\\
&=
{\displaystyle  \rho_n^{(\alpha,\beta)}\int_{-1}^1\frac{f(x)-f(y)}{x-y}\big[P_{n+1}^{(\alpha,\beta)}(x)P_n^{(\alpha,\beta)}(y)-P_n^{(\alpha,\beta)}(x)P_{n+1}^{(\alpha,\beta)}(y)\big]} \omega^{(\alpha,\beta)}(y) {\rm d}y\\
&=\rho_n^{(\alpha,\beta)} \big[\sigma_n^{(\alpha,\beta)}   a_{n}^{(\alpha,\beta)}(x;g)P_{n+1}^{(\alpha,\beta)}(x) -
\sigma_{n+1}^{(\alpha,\beta)}   a_{n+1}^{(\alpha,\beta)}(x;g)P_{n}^{(\alpha,\beta)}(x)\big].
\end{split}
\]
Then, the identity  (\ref{Jtruerr})  follows from  directly working out the constants $A_n^{(\alpha, \beta)}=\rho_n^{(\alpha,\beta)}\sigma_n^{(\alpha,\beta)}$ and $B_n^{(\alpha, \beta)}=\rho_n^{(\alpha,\beta)}\sigma_{n+1}^{(\alpha,\beta)}$
by using  \eqref{eq:jacexpcoeffs0} and
\eqref{rhoformula}.
\end{proof}

\smallskip

Now, we take $f(x)$ in the above to be the generalised $\Phi$-function (\ref{gpfun}),
and obtain from \eqref{galpbet} that
\begin{equation}\label{Jcoe}
    a_{n}^{(\alpha,\beta)}(x;g)= \frac{1}{\sigma_{n}^{(\alpha,\beta)}}\cdot\begin{cases}
    	\displaystyle\int_{a}^{1}g_{1}(x,y)P_{n}^{(\alpha,\beta)}(y) \omega^{(\alpha,\beta)}(y)\mathrm{d}y,\quad &x<a,\\[11pt]
	    	\displaystyle\int_{a}^{1}g_{2}(y)P_{n}^{(\alpha,\beta)}(y) \omega^{(\alpha,\beta)}(y) \mathrm{d}y,\quad &x=a,\\[11pt]
     	    	\displaystyle\int_{-1}^{1}g_{3}(x,y)P_{n}^{(\alpha,\beta)}(y)\omega^{(\alpha,\beta)}(y) \mathrm{d}y,\quad &x>a,
    \end{cases}
\end{equation}
where $g=g_i, i=1,2,3$ are given by
\begin{equation}\label{Jcoe2}\begin{split}
	g_{1}(x,y)&=\frac{z(y)(y-a)^{\lambda}}{y-x}; \quad g_{2}(y)=g_1(a,y)=z(y)(y-a)^{\lambda-1},\quad\lambda>0;\\[4pt]
	g_{3}(x,y)&=\begin{cases}
		\displaystyle\frac{z(x)(x-a)^{\lambda}}{x-y},\quad &y\leq a,\\[11pt]
		\displaystyle\frac{z(x)(x-a)^{\lambda}-z(y)(y-a)^{\lambda}}{x-y},\quad &y>a.
	\end{cases}
\end{split}
\end{equation}

\smallskip
It is important to point out  that in  \eqref{Jtruerr}, the convergence rate  of $e_f^{(\alpha,\beta)}(n,x)$  depends on both $a_{n}^{(\alpha,\beta)}(x;g)$ and  $P_n^{(\alpha,\beta)}(x)$.
 Moreover,  the two terms  in the right hand side of (\ref{Jtruerr}) do not cancel each other for almost all $x$,
except for  the function (\ref{gpfun})  with $\lambda$ being even.
Accordingly, we can estimate   $a_{n}^{(\alpha,\beta)}(x;g)$ and  $P_n^{(\alpha,\beta)}(x)$ separately.
The roadmap for the pointwise error analysis is as follows.
\begin{itemize}
\item[(i)]  We shall bound $|P_n^{(\alpha,\beta)}(x)|$ pointwisely using Theorem \ref{Thm23} and Corollary \ref{Bineua}  below.
\item[(ii)]  We shall estimate $|a_{n}^{(\alpha,\beta)}(x;g)|$ by using the Hilb-type formula in Theorem \ref{Thm22} and the van der Corput-type  Lemma to be presented in  Section \ref{sec3}, together with the regularity analysis of the underlying function $g(x,y)$ in $y$ given in
\eqref{Jcoe2}.
\end{itemize}

We first recall that Darboux \cite{Darboux} and Szeg\"{o}  \cite[Theorem 8.21.12]{Szego} introduced the  following Hilb-type formula on the asymptotics of  $P_n^{(\alpha,\beta)}(x)$ in terms of  a highly oscillatory Bessel function.
 \smallskip
\begin{theorem}[see \cite{Darboux,Szego}]\label{Thm22}
For $\alpha,\beta>-1$ and  $n\gg 1,$ we have
  \begin{equation}\label{Hilb}
  \begin{split}
     \theta^{-\frac{1}{2}}\sin^{\alpha+\frac{1}{2}}&\Big(\frac{\theta}{2}\Big)\cos^{\beta+\frac{1}{2}}
     \Big(\frac{\theta}{2}\Big)P_n^{(\alpha,\beta)}(\cos\theta)\\
    &=\frac{\Gamma(n+\alpha+1)}{\sqrt{2}n!{\tilde N}^{\alpha}}J_{\alpha}({\tilde N}\theta)
     +\begin{dcases}
     \theta^{\frac{1}{2}}{\cal O}\big({\tilde N}^{-\frac{3}{2}}\big),& cn^{-1}\le \theta\le \pi-\epsilon,\\[4pt]
     \theta^{\alpha+2}{\cal O}\big({\tilde N}^{\alpha}\big),& 0< \theta\le cn^{-1},
     \end{dcases}
     \end{split}
  \end{equation}
  where ${\tilde N}=n+(\alpha+\beta+1)/2$, $c$ and $\epsilon$ are fixed positive numbers, and $J_{\alpha}(z)$ is the first kind of Bessel function of order $\alpha$. The constants in the ${\cal O}$-terms depend on $\alpha$, $\beta$, $c$, and $\epsilon$, but do not depend on $n.$
\end{theorem}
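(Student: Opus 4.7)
The plan is to derive the Hilb-type asymptotic by reducing the Jacobi differential equation to a perturbed Bessel equation, a strategy going back to Liouville--Green and refined by Szeg\"{o}. First, $y(\theta)=P_n^{(\alpha,\beta)}(\cos\theta)$ satisfies the trigonometric form of the Jacobi ODE. Applying the Liouville substitution
\[
u(\theta) = \sin^{\alpha+\frac12}\!\Big(\frac{\theta}{2}\Big)\cos^{\beta+\frac12}\!\Big(\frac{\theta}{2}\Big)\,P_n^{(\alpha,\beta)}(\cos\theta)
\]
to eliminate the first-order term, a direct computation puts it in the normal form
\[
u''(\theta) + \Big[\tilde{N}^2 - \frac{\alpha^2-\frac14}{4\sin^2(\theta/2)} - \frac{\beta^2-\frac14}{4\cos^2(\theta/2)}\Big]\,u(\theta) = 0.
\]
The comparison function $v(\theta)=\theta^{1/2}J_\alpha(\tilde{N}\theta)$ satisfies $v'' + [\tilde{N}^2 - (\alpha^2-\frac14)\theta^{-2}]v = 0$, so $u$ and $v$ solve the same equation up to the smooth bounded perturbation
\[
R(\theta) := \frac{\alpha^2-\frac14}{4\sin^2(\theta/2)} - \frac{\alpha^2-\frac14}{\theta^2} + \frac{\beta^2-\frac14}{4\cos^2(\theta/2)},
\]
which is real-analytic and of order ${\cal O}(1)$ on $(0,\pi-\epsilon]$ since $4\sin^2(\theta/2) = \theta^2 - \theta^4/12 + \cdots$.

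Next, I would seek a constant $C_n$ and an error $w(\theta) := u(\theta) - C_n v(\theta)$ which satisfies the inhomogeneous Bessel equation $w'' + [\tilde{N}^2 - (\alpha^2-\frac14)\theta^{-2}]w = -R(\theta)\,u(\theta)$. Using the Wronskian identity for $\{J_\alpha,Y_\alpha\}$ (equal to $2/\pi$), $w$ admits a Volterra representation against the Green's function
\[
G(\theta,s) = \frac{\pi}{2}\sqrt{s\theta}\,\bigl[J_\alpha(\tilde{N}s)Y_\alpha(\tilde{N}\theta) - Y_\alpha(\tilde{N}s)J_\alpha(\tilde{N}\theta)\bigr].
\]
The constant $C_n$ is pinned down by the normalization $P_n^{(\alpha,\beta)}(1)=\Gamma(n+\alpha+1)/(n!\,\Gamma(\alpha+1))$ together with the small-argument expansion $J_\alpha(z)\sim z^\alpha/(2^\alpha\Gamma(\alpha+1))$, which after matching gives exactly $C_n = \Gamma(n+\alpha+1)/(\sqrt{2}\,n!\,\tilde{N}^\alpha)$.

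With the Volterra representation in hand, a Gronwall-type iteration combined with the standard bounds $|J_\alpha(z)|,\,|Y_\alpha(z)|\le C z^{-1/2}$ for $z\ge 1$ yields the remainder $\theta^{1/2}{\cal O}(\tilde{N}^{-3/2})$ on the bulk regime $cn^{-1}\le\theta\le\pi-\epsilon$. For the inner regime $0<\theta\le cn^{-1}$, where $\tilde{N}\theta$ is bounded, a separate and simpler approach suffices: one expands $P_n^{(\alpha,\beta)}(\cos\theta)$ and $J_\alpha(\tilde{N}\theta)$ in power series about $\theta=0$ and compares term by term. The leading terms agree by the choice of $C_n$, and the first mismatched term produces the remainder $\theta^{\alpha+2}{\cal O}(\tilde{N}^\alpha)$.

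The main obstacle I foresee is uniformity of the Volterra estimate as $\theta\downarrow cn^{-1}$: the kernel $G(\theta,s)$ involves $Y_\alpha$, which blows up like $s^{-\alpha}$ (or logarithmically when $\alpha=0$) at the origin, so one must verify that $R(s)\,u(s)\,G(\theta,s)$ is integrable down to $s=0$ and that the accumulated constant from the Gronwall iteration does not overwhelm the $\tilde{N}^{-3/2}$ gain. A secondary subtlety is ensuring the bulk and boundary-layer estimates match at $\theta\sim cn^{-1}$ with compatible error orders, so that no ``seam'' remains in the overall statement.
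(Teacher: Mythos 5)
The paper offers no proof of this statement: it is quoted verbatim as a classical result of Darboux and Szeg\"{o} (Theorem 8.21.12 of \cite{Szego}), so there is no in-paper argument to compare against. Your outline --- the Liouville transformation of the Jacobi ODE to normal form, comparison with the Bessel equation through a Volterra/Green's-function representation built from $J_\alpha$ and $Y_\alpha$, and fixing $C_n=\Gamma(n+\alpha+1)/(\sqrt{2}\,n!\,\tilde{N}^{\alpha})$ by matching at $\theta=0$ --- is precisely the classical argument given in that reference (up to an immaterial sign in the forcing term $R(\theta)u$), so it is the correct and standard route to the cited result.
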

\smallskip

Using \cite[Theorem 7.32.2]{Szego} and $P_n^{(\alpha,\beta)}(-x)=(-1)^nP_n^{(\beta,\alpha)}(x)$, Muckenhoupt \cite{Muckenhoupt} derived the following pointwise bound.
\begin{theorem}[{see \cite[(2.6)-(2.7)]{Muckenhoupt}}] \label{Thm23} Let $\alpha,\beta>-1$ and $d$ be a fixed integer. Then for $n\ge \max\{0,-d\}$, we have
  \begin{equation}\label{Jasy1}
\big|P_{n+d}^{(\alpha,\beta)}(x)\big|\le CE_n^{(\alpha,\beta)}(x),
 \end{equation}
where $C$ is a positive  constant independent of $n$ and $x$, and
\begin{equation}\label{Jasy2}
  E_n^{(\alpha,\beta)}(x)=
  \begin{dcases}
    (n+1)^{\alpha},  & 1-(n+1)^{-2}\le x\le 1,\\[4pt]
    (n+1)^{-\frac{1}{2}}(1-x)^{-\frac{\alpha}{2}-\frac{1}{4}}, & 0\le x\le 1-(n+1)^{-2},\\[4pt]
    (n+1)^{-\frac{1}{2}}(1+x)^{-\frac{\beta}{2}-\frac{1}{4}},  & -1+(n+1)^{-2}\le x\le 0,\\[4pt]
    (n+1)^{\beta}, & -1\le x\le -1+(n+1)^{-2}.
  \end{dcases}
\end{equation}
\end{theorem}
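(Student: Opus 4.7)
The plan is to obtain Theorem \ref{Thm23} directly from the Hilb-type formula \eqref{Hilb} together with classical size bounds for the Bessel function $J_\alpha$. I would first exploit the reflection $P_n^{(\alpha,\beta)}(-x) = (-1)^n P_n^{(\beta,\alpha)}(x)$ to reduce the four cases of \eqref{Jasy2} to the two cases on $x \in [0,1]$; the cases on $[-1,0]$ then follow by interchanging $\alpha$ and $\beta$ in what is proved for $x \ge 0$. Since $d$ is a fixed integer, replacing $n$ by $n+d$ perturbs only the constants arising from Stirling's formula and can be absorbed into $C$, so it suffices to treat $d=0$.

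For the bulk case $0 \le x \le 1-(n+1)^{-2}$, I would set $x = \cos\theta$ so that $\theta \in [c/n,\pi/2]$ for some fixed $c>0$, and apply \eqref{Hilb}. Using the uniform bound $|J_\alpha(z)| \le C z^{-1/2}$ for $z \ge 1$ (which gives $|J_\alpha(\tilde N\theta)| \le C(n\theta)^{-1/2}$), the equivalences $\sin(\theta/2) \asymp \theta$ and $\cos(\theta/2) \asymp 1$, together with the Stirling estimate $\Gamma(n+\alpha+1)/(n!\,\tilde N^\alpha) = O(1)$, I would rearrange \eqref{Hilb} to obtain
\begin{equation*}
\big|P_n^{(\alpha,\beta)}(\cos\theta)\big| \le C\,\theta^{1/2} \cdot \theta^{-\alpha-1/2} \cdot (n\theta)^{-1/2} = C n^{-1/2}\,\theta^{-\alpha-1/2},
\end{equation*}
while the error term in \eqref{Hilb} contributes at lower order and can be absorbed. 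The substitution $1-x = 2\sin^2(\theta/2) \asymp \theta^2$ then converts this to $|P_n^{(\alpha,\beta)}(x)| \le Cn^{-1/2}(1-x)^{-\alpha/2-1/4}$, which is exactly the second line of \eqref{Jasy2}.

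For the endpoint case $1-(n+1)^{-2} \le x \le 1$, one has $\theta \le \sqrt{2}/(n+1)$ and $\tilde N\theta = O(1)$. I would feed the small-argument expansion $J_\alpha(z) = z^\alpha/(2^\alpha \Gamma(\alpha+1)) + O(z^{\alpha+2})$ into \eqref{Hilb} together with Stirling, obtaining $|P_n^{(\alpha,\beta)}(\cos\theta)| \le Cn^\alpha$; equivalently, one may invoke the exact value $P_n^{(\alpha,\beta)}(1) = \binom{n+\alpha}{n} \sim n^\alpha/\Gamma(\alpha+1)$ and extend by a uniform-continuity argument on the narrow window. The two regimes match consistently at the transition $\theta \sim 1/n$, since $n^{-1/2}\theta^{-\alpha-1/2}\big|_{\theta=1/n} = n^\alpha$. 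The main obstacle I anticipate is uniformity in $\alpha \in (-1,\infty)$: for $\alpha<0$ the Bessel function $J_\alpha$ itself is singular at $z=0$, but this singularity is precisely cancelled by the prefactor $\sin^{\alpha+1/2}(\theta/2)$ on the left of \eqref{Hilb}, and tracking this cancellation cleanly (so that the constant in the final bound does not blow up as $\alpha \downarrow -1$) is where the argument requires the most care.
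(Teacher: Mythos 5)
This theorem is not proved in the paper at all: it is imported verbatim as a known result, cited to Muckenhoupt \cite{Muckenhoupt}, who (as the paper notes just above the statement) obtained it from Szeg\H{o}'s Theorem 7.32.2 together with the reflection identity $P_n^{(\alpha,\beta)}(-x)=(-1)^nP_n^{(\beta,\alpha)}(x)$. Your sketch essentially reconstructs that classical derivation: the reduction to $x\in[0,1]$ by reflection, the bulk bound $|P_n^{(\alpha,\beta)}(\cos\theta)|\le Cn^{-1/2}\theta^{-\alpha-1/2}$ from \eqref{Hilb} with $|J_\alpha(z)|\le Cz^{-1/2}$, and the endpoint bound $O(n^\alpha)$ from the small-argument behaviour $|J_\alpha(z)|\le Cz^\alpha$ together with the second error regime of \eqref{Hilb} are exactly the content of Szeg\H{o}'s 7.32.2, and your order bookkeeping in both regimes (including the matching $n^{-1/2}\theta^{-\alpha-1/2}\asymp n^\alpha$ at $\theta\asymp n^{-1}$, which also justifies absorbing the fixed shift $d$ despite the slightly displaced case boundaries in \eqref{Jasy2}) checks out. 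Three small points would need attention in a full write-up: (i) \eqref{Hilb} is stated only for $n\gg1$, so the finitely many remaining $n$ must be absorbed into $C$ using that $E_n^{(\alpha,\beta)}$ is bounded below on each of its pieces for fixed $n$; (ii) your fallback of ``uniform continuity on the narrow window'' near $x=1$ is not quantitative enough on an $n$-dependent interval — stick with the Bessel small-argument route, which works; (iii) the worry about uniformity as $\alpha\downarrow-1$ is moot here, since $\alpha,\beta$ are fixed and $C$ is allowed to depend on them. With those caveats the proposal is a correct, standard proof of the cited bound rather than a new route.
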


\smallskip
As  a direct consequence of this theorem,  we have the following useful pointwise upper bound.
\begin{corollary}\label{Bineua} For $\alpha,\beta>-1$, we have
  \begin{equation}\label{Jasy}
\big|P_{n}^{(\alpha,\beta)}(x)\big|\le C_0 (n+1)^{-\frac{1}{2}}(1-x)^{-\max\{\frac{\alpha}{2}+\frac{1}{4},0\}}(1+x)^{-\max\{\frac{\beta}{2}+\frac{1}{4},0\}},\,\,\;\;  x\in [-1,1],
 \end{equation}
where  $C_0=2^{\max\{\frac{\alpha}{2}+\frac{1}{4},\frac{\beta}{2}+\frac{1}{4},0\}}C$ and $C$ is the same as in \eqref{Jasy1} with $d=0.$
\end{corollary}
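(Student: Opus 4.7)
The plan is to derive the corollary as a direct consequence of Theorem \ref{Thm23} with $d=0$, by showing that each of the four piecewise bounds in \eqref{Jasy2} is dominated by the single uniform expression on the right-hand side of \eqref{Jasy}. Since the target bound carries a common prefactor $(n+1)^{-1/2}$ and weight $(1-x)^{-\max\{\alpha/2+1/4,\,0\}}(1+x)^{-\max\{\beta/2+1/4,\,0\}}$, the argument reduces to a case-by-case verification in which every missing factor is absorbed into the constant $C_0$ via the elementary inequalities $1\pm x\le 2$ on $[-1,1]$.

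First I would handle the ``middle'' subregion $0\le x\le 1-(n+1)^{-2}$, where Theorem \ref{Thm23} gives $|P_n^{(\alpha,\beta)}(x)|\le C(n+1)^{-1/2}(1-x)^{-\alpha/2-1/4}$. When $\alpha/2+1/4\ge 0$, this exponent coincides with $\max\{\alpha/2+1/4,\,0\}$, and the only gap is the missing weight, for which $(1+x)^{-\max\{\beta/2+1/4,\,0\}}\ge 2^{-\max\{\beta/2+1/4,\,0\}}$ since $1+x\le 2$. When $\alpha/2+1/4<0$, we have $(1-x)^{-\alpha/2-1/4}\le 1$ trivially (since $1-x\le 1$ and the exponent is positive), while $\max\{\alpha/2+1/4,\,0\}=0$ matches on the target side. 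The symmetric subregion $-1+(n+1)^{-2}\le x\le 0$ follows by the same reasoning with $\alpha$ and $\beta$ interchanged.

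Next, for the ``boundary'' subregion $1-(n+1)^{-2}\le x\le 1$, the original bound is $(n+1)^{\alpha}$. If $\alpha/2+1/4\ge 0$, then on this subregion $(n+1)^{-1/2}(1-x)^{-\alpha/2-1/4}\ge (n+1)^{-1/2}\cdot (n+1)^{\alpha+1/2}=(n+1)^{\alpha}$, so the target dominates up to a bounded factor from the $(1+x)$ weight. If $\alpha/2+1/4<0$, i.e.\ $\alpha<-1/2$, then $(n+1)^{\alpha}\le (n+1)^{-1/2}$, which is exactly the target prefactor, and the weights again contribute only bounded factors of $2$. The mirror subregion near $x=-1$ is handled by the symmetry $\alpha\leftrightarrow\beta$.

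No serious obstacle is expected here, as this is essentially routine bookkeeping of the Muckenhoupt estimate. The only point requiring care is to track all factors of $2$ and verify that the single constant $2^{\max\{\alpha/2+1/4,\,\beta/2+1/4,\,0\}}$ in $C_0$ is large enough to absorb the worst case across all four subregions simultaneously; this is immediate since the three-term maximum dominates both $\max\{\alpha/2+1/4,\,0\}$ and $\max\{\beta/2+1/4,\,0\}$ individually.
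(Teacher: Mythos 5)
Your proposal is correct and follows essentially the same route as the paper: a case-by-case comparison of the four branches of $E_n^{(\alpha,\beta)}(x)$ in \eqref{Jasy2} against the uniform bound, splitting on the sign of $\frac{\alpha}{2}+\frac14$ (resp.\ $\frac{\beta}{2}+\frac14$) and absorbing the missing weight via $1\pm x\le 2$. The bookkeeping of the constant $C_0$ is also handled exactly as in the paper's proof.
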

\begin{proof}  It is evident that if  $-1<\alpha\le -1/2,$
then $\max\{\frac{\alpha}{2}+\frac{1}{4},0\}=0,$  and by \eqref{Jasy2},
\begin{equation*}
\begin{dcases}
  (n+1)^{\alpha} \le (n+1)^{-\frac{1}{2}}(1-x)^{-\max\{\frac{\alpha}{2}+\frac{1}{4},0\}}, & 1-(n+1)^{-2}\le x\le 1,\\[2pt]
  (n+1)^{-\frac{1}{2}}(1-x)^{-\frac{\alpha}{2}-\frac{1}{4}}\le (n+1)^{-\frac{1}{2}}(1-x)^{-\max\{\frac{\alpha}{2}+\frac{1}{4},0\}}, & 0\le x\le 1-(n+1)^{-2}.
  \end{dcases}
\end{equation*}
If  $\alpha>-1/2,$ then we have $\max\{\frac{\alpha}{2}+\frac{1}{4},0\}=\frac{\alpha}{2}+\frac{1}{4}>0,$ so apparently \eqref{Jasy2} is valid.
Therefore, for   $x\in [0,1],$ and  $\alpha,\beta>-1,$ we have
\begin{equation*}\label{Jasy5}
\begin{split}
 E_n^{(\alpha,\beta)}(x)& \le (n+1)^{-\frac{1}{2}}(1-x)^{-\max\{\frac{\alpha}{2}+\frac{1}{4},0\}}\\
 & \le  2^{\max\left\{\frac{\beta}{2}+\frac{1}{4},0\right\}}
  (n+1)^{-\frac{1}{2}}(1-x)^{-\max\{\frac{\alpha}{2}+\frac{1}{4},0\}}(1+x)^{-\max\{\frac{\beta}{2}+\frac{1}{4},0\}}.
  \end{split}
 \end{equation*}
Following the same lines as above,  we can show that for $x\in [-1,0]$ and  $\alpha,\beta>-1,$
 \begin{equation*}\label{Jasy50}
\begin{split}
 E_n^{(\alpha,\beta)}(x)& \le (n+1)^{-\frac{1}{2}}(1+x)^{-\max\{\frac{\beta}{2}+\frac{1}{4},0\}}\\
 & \le 2^{\max\left\{\frac{\alpha}{2}+\frac{1}{4},0\right\}}
  (n+1)^{-\frac{1}{2}}(1-x)^{-\max\{\frac{\alpha}{2}+\frac{1}{4},0\}}(1+x)^{-\max\{\frac{\beta}{2}+\frac{1}{4},0\}}.
  \end{split}
 \end{equation*}
This completes the proof.
  \end{proof}

\begin{remark}  In some special cases,  the constant $C$  in \eqref{Jasy} is explicitly known. Indeed, we find from
\cite[18.14.3]{Olver} that for $-\frac{1}{2}\le \alpha,\,\beta\le \frac{1}{2}$, we have
  \begin{equation*}
  \Big(\frac{1-x}{2}\Big)^{\frac{\alpha}{2}+\frac{1}{4}}\Big(\frac{1+x}{2}\Big)^{\frac{\beta}{2}+\frac{1}{4}}\big|P_{n}^{(\alpha,\beta)}(x)\big|\le
  \frac{\Gamma(\max(\alpha,\beta)+n+1)}{\pi^{\frac{1}{2}}n!\big(n+\frac{\alpha+\beta+1}{2}\big)^{\max(\alpha,\beta)+\frac{1}{2}}},\quad x\in[-1,1].
   \end{equation*}
Moreover, F\"{o}rster \cite{Forster} stated the bound for the Gegenbauer polynomials  with $\alpha\geq1$, that is,
 \begin{equation*}\label{Gegen}
  (1-x^{2})^{\frac{\alpha}{2}}\big|C_{n}^{(\alpha)}(x)\big|\leq\frac{ (2\alpha-1)\Gamma\left(\frac{n}{2}+\alpha\right)}{\Gamma(\alpha)\Gamma\left(\frac{n}{2}+1\right)},\quad x\in[-1,1],
  \end{equation*}
  where
  \begin{equation*}\label{}
    C_{n}^{(\alpha)}(x)=\frac{\Gamma(\alpha+\frac{1}{2})\Gamma(n+2\alpha)}{\Gamma(2\alpha)\Gamma(n+\alpha+\frac{1}{2})}P_n^{(\alpha-\frac{1}{2},\alpha-\frac{1}{2})}(x).
  \end{equation*}
\end{remark}

\begin{remark}
	The above bound of  $P_n^{(\alpha,\beta)}(x)$ 
	can precisely  characterise its behaviour  near the endpoints, which allows us to 
 describe   $C(1-\xi)$ and $C(-1+\xi)$ for  $\xi\in (0,\delta)$ in \eqref{PerrJac1}.
\end{remark}

\section{Pointwise error estimate for the Jacobi expansion of  the generalised $\Phi$-function}
 \label{sec3}

 This section is devoted to the asymptotic analysis of the Jacobi expansion coefficient $a_{n}^{(\alpha,\beta)}(x;g)$ of $g(x,y)$  in \eqref{Jcoe}-\eqref{Jcoe2}. With this and the preparations in Section \ref{sec2}, we shall be able to prove the main results stated in Theorem \ref{Thm13}.

\subsection{Useful lemmas}
A critical tool for the analysis is the following asymptotic formulas involving a highly oscillatory Bessel functions related to the Jacobi polynomials in \eqref{Hilb},
 which extend the classical van der Corput Lemma on
the Fourier transform \cite[pp. 332-334]{Stein} to the Bessel transform. They are  therefore dubbed as the generalized van der Corput-type Lemmas.

Let  $\Omega=(a, b) \subset \mathbb{R}$  be a finite open interval. Denote by   ${\rm  AC}(\bar \Omega)$  the space of
absolutely continuous functions on $\bar \Omega$.  We further introduce the space
$$W_{\rm AC}(\Omega)=\big\{\psi\,:\,\psi\in {\rm AC}(\bar \Omega),\;\; \psi'\in L^1(\Omega)\big\},$$
equipped with the norm
\begin{equation*}\label{}
  \|\psi\|_{W_{\rm AC}(\Omega)}=\|\psi\|_{L^{\infty}(\Omega)}+\|\psi^{\prime}\|_{L^{1}(\Omega)}.
\end{equation*}
 Indeed, according to Stein and Shakarchi \cite[pp.\! 130]{Stein2003} and Tao \cite[pp.\! 143-145]{Tao}, we have the integral representation for any $\psi \in W_{\rm AC}(\Omega):$
\begin{equation*}\label{}
\psi(x)=\psi(a)+\int_a^x\psi'(t){\rm d}t,
\end{equation*}
and any continuous function of bounded variation on $\Omega$ which maps each set of measure zero into a set of measure zero is also absolutely continuous.

It is also noteworthy that the AC-type space with different regularity on the highest derivative, e.g., BV (functions of bounded variation) has been used in Trefethen  \cite{Trefethen1} in the context of Chebyshev polynomial approximation of singular functions.

\begin{lemma}\label{vdc}
  Given $\alpha+\nu>-1$ and $\beta>-1$, the following asymptotic estimates hold for $\omega\gg 1.$
  \begin{itemize}
    \item[{\rm (i)}] For $\psi(x)\in W_{\rm AC}(0,b)$, we have
    \begin{equation}\label{vdczero}
      \int_{0}^{b}\!x^{\alpha}(b-x)^{\beta}J_{\nu}(\omega x)\psi(x)\,\mathrm{d}x=\|\psi\|_{W_{\rm AC}(0,b)}\cdot\mathcal{O}\big(\omega^{-\min\{\alpha+1,\beta+\frac{3}{2},\frac{3}{2}\}}\big).
    \end{equation}
 \item[{\rm (ii)}]  For $b>c>0$ and $\psi(x)\in W_{\rm AC}(c,b)$, we have
    \begin{equation}\label{vdcnonzero}
       \int_{c}^{b}\!(b-x)^{\beta}J_{\nu}(\omega x)\psi(x)\,\mathrm{d}x=\|\psi\|_{W_{\rm AC}(c,b)}\cdot\mathcal{O}\big(\omega^{-\min\{\beta+\frac{3}{2},\frac{3}{2}\}}\big).
    \end{equation}
    \end{itemize}
Here, the constant in the Big $\mathcal O$ is independent of $\omega$ and $\psi$.
\end{lemma}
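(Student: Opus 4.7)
The plan is to prove (i) by decomposing the integration domain into three pieces at the scale $1/\omega$, handling each with a different tool, and matching the resulting rates to the three exponents in the minimum. Specifically, I would write
$$
\int_0^b = \int_0^{1/\omega} + \int_{1/\omega}^{b-1/\omega} + \int_{b-1/\omega}^b =: I_1+I_2+I_3,
$$
with thresholds chosen so that the Bessel function is in its small-argument regime on the first piece and in its large-argument regime on the other two.

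For $I_1$, I would apply the standard small-argument bound $|J_\nu(z)|\leq Cz^\nu$ (valid for $\nu>-1$ and $0\leq z\leq 1$). Together with $(b-x)^\beta\leq C$ on this region, this gives
$$
|I_1|\leq C\omega^\nu\|\psi\|_{L^\infty}\int_0^{1/\omega}x^{\alpha+\nu}\,dx \lesssim \omega^{-\alpha-1}\|\psi\|_{W_{\rm AC}(0,b)},
$$
where integrability at $0$ uses exactly the hypothesis $\alpha+\nu>-1$. For $I_3$, I would apply the uniform large-argument bound $|J_\nu(z)|\leq Cz^{-1/2}$ (valid since $\omega x\gtrsim 1$ on $[b-1/\omega,b]$) together with $x^\alpha\leq C$ to get
$$
|I_3|\leq C\omega^{-1/2}\|\psi\|_{L^\infty}\int_{b-1/\omega}^b (b-x)^\beta\,dx \lesssim \omega^{-\beta-3/2}\|\psi\|_{W_{\rm AC}(0,b)}.
$$

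For the interior integral $I_2$, I would substitute the Hankel-type asymptotic
$$
J_\nu(\omega x)=\sqrt{\tfrac{2}{\pi\omega x}}\cos\!\big(\omega x-\tfrac{\nu\pi}{2}-\tfrac{\pi}{4}\big)+R_\nu(\omega x),\qquad |R_\nu(z)|\leq Cz^{-3/2},
$$
and treat the two pieces separately. The remainder $R_\nu$ directly yields $\mathcal{O}(\omega^{-3/2})$ after a weighted integration of $\psi$. The oscillatory main term I would handle by one integration by parts (integrating $\cos$ to $\sin/\omega$). The amplitude factor $\omega^{-1/2}$ and the $\omega^{-1}$ gain from IBP combine to the target $\omega^{-3/2}$ rate. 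The two boundary terms at $x=1/\omega$ and $x=b-1/\omega$ come out to be precisely of orders $\omega^{-\alpha-1}$ and $\omega^{-\beta-3/2}$, matching the adjacent regions. The interior integral left by the IBP splits into two kinds of contributions: terms in which the derivative falls on $\psi$, controlled by $\omega^{-3/2}\|\psi'\|_{L^1}$ times the sup of the weight $x^{\alpha-1/2}(b-x)^\beta$ on $[1/\omega,b-1/\omega]$; and terms in which the derivative falls on the weight, giving pure integrals of $x^{\alpha-3/2}(b-x)^\beta$ or $x^{\alpha-1/2}(b-x)^{\beta-1}$ over the same region. Each of these evaluates, by a direct computation using the $1/\omega$ thresholds, to at most the maximum of $\omega^{-\alpha-1}$, $\omega^{-\beta-3/2}$, and $\omega^{-3/2}$. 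Summing $I_1+I_2+I_3$ then yields the claimed $\omega^{-\min\{\alpha+1,\beta+3/2,3/2\}}$.

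Part (ii) follows from the same three-region argument restricted to $[c,b]$: since $c>0$, the origin contribution disappears entirely and only the middle and near-$b$ analyses are needed, giving $\omega^{-\min\{\beta+3/2,3/2\}}$. The main obstacle I expect is the bookkeeping in $I_2$ when $\alpha<1/2$ or $\beta<0$, where the weight derivatives $x^{\alpha-3/2}$ and $(b-x)^{\beta-1}$ are not integrable over $(0,b)$: one must verify that on $[1/\omega,b-1/\omega]$ they contribute exactly the matching $\omega^{-\alpha-1}$ and $\omega^{-\beta-3/2}$ rates, so that the minimum-rate bound is preserved rather than a worse combined exponent. This delicate absorption is precisely what forces the choice of cutoffs at the natural scale $1/\omega$ and explains why the $W_{\rm AC}$-norm (with $\|\psi'\|_{L^1}$ but no pointwise regularity of $\psi'$) is the correct function-space setting.
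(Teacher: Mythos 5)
Your proposal is essentially correct, but it takes a genuinely different route from the paper. The paper does not prove part (i) at all: it is quoted as a special case of Lemma 2.5 of Xiang (SIAM J.\ Numer.\ Anal.\ 59 (2021)), and only part (ii) is proved, by inserting the Hankel asymptotic $J_{\nu}(z)=\sqrt{2/(\pi z)}\cos(z-\nu\pi/2-\pi/4)+\mathcal{O}(z^{-3/2})$ (uniformly valid on $[c,b]$ because $c>0$), complexifying the cosine, substituting $u=b-x$, and integrating by parts against the incomplete moment $F(x)=\int_0^x u^{\beta}e^{{\rm i}\omega u}\,\mathrm{d}u$, whose bound $|F(x)|=\mathcal{O}(\omega^{-1}+\omega^{-\beta-1})$ is read off from the ${}_1\mathrm{F}_1$ representation. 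Your three-region decomposition at scale $1/\omega$ is a self-contained, more elementary argument that recovers both parts, and it makes transparent where each of the three exponents $\alpha+1$, $\beta+\tfrac32$, $\tfrac32$ originates; the paper's route for (ii) buys a shorter proof by packaging the endpoint behaviour into the known asymptotics of $F$.

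One technical caveat in your treatment of (i): the non-oscillatory remainder $R_{\nu}(\omega x)=\mathcal{O}((\omega x)^{-3/2})$, estimated by absolute values over $[1/\omega,\,b-1/\omega]$, produces $\omega^{-3/2}\int_{1/\omega}^{b/2}x^{\alpha-3/2}\,\mathrm{d}x$, which at the borderline $\alpha=\tfrac12$ equals $\omega^{-3/2}\log\omega$ rather than the claimed $\omega^{-3/2}$. (The analogous borderline for the weight-derivative terms is harmless because the prefactors $|\alpha-\tfrac12|$ and $|\beta|$ vanish there, as you can check.) The fix is standard: take one more term of the Hankel expansion, so that the new remainder is $\mathcal{O}(z^{-5/2})$ and the $z^{-3/2}$-amplitude correction is itself oscillatory and gains an extra $\omega^{-1}$ from integration by parts. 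With that adjustment, and the observation that part (ii) is immune to this issue since the origin is excluded, your argument delivers the stated rates in full.
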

 \begin{proof}
  The estimate \eqref{vdczero} is a special case of  \cite[Lemma 2.5]{Xiang2020}.  Now we show the improved estimate
  (\ref{vdcnonzero}) on the closed subinterval $[c,b]\subset [0,b]$. From the asymptotic property  of the Bessel function  \cite[pp. 362]{Abram}:
  \begin{equation*}
    J_{\nu}(z)=\sqrt{\frac{2}{\pi z}}\cos(z-\nu\pi/2-\pi/4)+\mathcal{O}\big(z^{-\frac{3}{2}}\big),\quad z\to\infty,
  \end{equation*}
  we have by using $\cos(\omega x-\nu\pi/2-\pi/4)=\cos(-\omega x+\nu\pi/2+\pi/4)$ that
  \begin{equation}\label{vdcpf1}
    \begin{aligned}
    \int_{c}^{b} &(b-x)^{\beta}\psi(x)J_{\nu}(\omega x)\,\mathrm{d}x\\
    & =\sqrt{\frac{2}{\pi \omega}}\int_{c}^{b}\!(b-x)^{\beta}
      \cos(\omega x-\nu\pi/2-\pi/4)x^{-\frac{1}{2}}\psi(x)\,\mathrm{d}x+\mathcal{O}(\omega^{-\frac{3}{2}})\\
    & =\sqrt{\frac{2}{\pi\omega}} \,\Re\Big\{e^{{\rm i} (\nu\pi/2+\pi/4)}\int_{c}^{b}\!(b-x)^{\beta}
       e^{-{\rm i}\omega x}x^{-\frac{1}{2}}\psi(x)\,\mathrm{d}x\Big\}+\mathcal{O}(\omega^{-\frac{3}{2}})\\
    & =\sqrt{\frac{2}{\pi\omega}}\,  \Re\Big\{e^{{\rm i} (\nu\pi/2+\pi/4-b\omega)}\int_{0}^{b-c}\! u^{\beta}
     e^{{\rm i}\omega u}(b-u)^{-\frac{1}{2}}\psi(b-u)\,\mathrm{d}u\Big\}+\mathcal{O}(\omega^{-\frac{3}{2}}),\\
     \end{aligned}
  \end{equation}
  where $\Re\{z\}$ denotes the real part of $z$. Setting $F(x)=\int_{0}^{x}\!u^{\beta}e^{{\rm i}\omega u}\,\mathrm{d}u$ and applying the integration by parts, we obtain
  \begin{equation}\label{vdcpf2}
    \begin{aligned}
    &\Big|\int_{0}^{b-c}\!u^{\beta}e^{{\rm i}\omega u}(b-u)^{-\frac{1}{2}}\psi(b-u)\,\mathrm{d}u\Big|=\Big| \int_{0}^{b-c}\!(b-u)^{-\frac{1}{2}}\psi(b-u) {\rm d} F(u) \Big|\\
    &\;\; =\Big|\Big[(b-u)^{-\frac{1}{2}}\psi(b-u)F(u)\Big]_{0}^{b-c}-\int_{0}^{b-c}\!F(u)\,\big((b-u)^{-\frac{1}{2}}\psi(b-u)\big)' \mathrm{d} u\Big|\\
    &\;\; \leq \Big(\frac{|\psi(c)|}{\sqrt{c}}+\int_{c}^{b}\big|\big(x^{-\frac{1}{2}}\psi(x)\big)^{\prime}\big|\,\mathrm{d}x\Big) \max_{u\in[0,b-c]}|F(u)|
    \\
    &\;\; \leq C\|\psi\|_{W_{\rm AC}(c,b)}\max_{x\in[0,b-c]}|F(x)|,
    \end{aligned}
  \end{equation}
  where $C$ is a constant independent of $\omega$. Finally, we use an asymptotic behavior of $F(x)$ in terms of the hypergeometric function ${}_{1}{\rm F}_{1}(\cdot)$ in \cite[(13.5.1)]{Abram}  to claim that
  \begin{equation*}\label{}
    |F(x)|=\frac{x^{\beta+1}}{\beta+1}\big|{}_{1}{\rm F}_{1}(\beta+1;\beta+2;{\rm i}\omega x)\big|=\mathcal{O}\left(\omega^{-1}+\omega^{-\beta-1}\right),
  \end{equation*}
  which, together with (\ref{vdcpf1}) and (\ref{vdcpf2}), leads to  (\ref{vdcnonzero}).
\end{proof}

With Lemma \ref{vdc} at our disposal, we now associate the Bessel function in \eqref{vdczero}-\eqref{vdcnonzero} with the Jacobi polynomial
through the  Hilb-type formula  \eqref{Hilb} and derive the  asymptotic estimates in Lemma  \ref{van der Corput Jacobi}  and Lemma \ref{JEC} below.
These allow us to deal with  the  integrals involved the Jacobi polynomials in $a_{n}^{(\alpha,\beta)}(x;g)$.
\begin{lemma}\label{van der Corput Jacobi} Let  $\alpha,\beta,\gamma,\delta>-1.$
If $\psi(x)\in W_{\rm AC}(a,1)$ with $a\in (-1,1)$, then for $n\gg 1,$ we have
   \begin{equation}\label{vdcorputJaca}
        \int_{a}^{1}\!(x-a)^{\gamma}(1-x)^{\delta}P_{n}^{(\alpha,\beta)}(x)\psi(x)\,\mathrm{d}x=\|\psi\|_{W_{\rm AC}(a,1)} \cdot \mathcal{O}\big(n^{-\min\{2\delta-\alpha+2,\gamma+\frac{3}{2},\frac{3}{2}\}}\big).
     \end{equation}
If $\psi(x)\in W_{\rm AC}(a,b)$  with  $-1<a<b<1$, then  for $n\gg 1,$ we have
    \begin{equation}\label{vdcorputJacb}
      \int_{a}^{b}\!(x-a)^{\gamma}P_{n}^{(\alpha,\beta)}(x)\psi(x)\,\mathrm{d}x=\|\psi\|_{W_{\rm AC}(a,b)} \cdot \mathcal{O}\big(n^{-\min\{\gamma+\frac{3}{2},\frac{3}{2}\}}\big).
     \end{equation}
\end{lemma}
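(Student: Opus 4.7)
My plan is to pass to the trigonometric variable $\theta$ via $x=\cos\theta$, substitute the Hilb-type formula of Theorem~\ref{Thm22} to replace $P_n^{(\alpha,\beta)}(\cos\theta)$ with a controlled multiple of $J_\alpha(\tilde N\theta)$ plus a two-piece remainder, and then invoke Lemma~\ref{vdc} on the resulting Bessel transform.

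For part~(i), let $\theta_a=\arccos a\in(0,\pi)$. The substitution recasts the integral as
$$\int_0^{\theta_a}(\cos\theta-\cos\theta_a)^\gamma(1-\cos\theta)^\delta P_n^{(\alpha,\beta)}(\cos\theta)\psi(\cos\theta)\sin\theta\,\mathrm{d}\theta.$$
Inserting the Hilb expansion and using $1-\cos\theta=2\sin^2(\theta/2)$ and $\sin\theta=2\sin(\theta/2)\cos(\theta/2)$, the algebraic weights combine into $\theta^{1/2}\sin^{2\delta-\alpha+1/2}(\theta/2)\cos^{1/2-\beta}(\theta/2)$, which behaves like $\theta^{2\delta-\alpha+1}$ near $\theta=0$; likewise $(\cos\theta-\cos\theta_a)^\gamma\sim(\sin\theta_a)^\gamma(\theta_a-\theta)^\gamma$ near $\theta=\theta_a$. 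After stripping off these leading algebraic factors, the residual amplitude lies in $W_{\rm AC}(0,\theta_a)$ with norm controlled by $\|\psi\|_{W_{\rm AC}(a,1)}$, via the chain-rule identity $\int_0^{\theta_a}\sin\theta\,|\psi'(\cos\theta)|\,\mathrm{d}\theta=\|\psi'\|_{L^1(a,1)}$. Applying Lemma~\ref{vdc}(i) with parameters $\alpha_{\rm vdc}=2\delta-\alpha+1$, $\beta_{\rm vdc}=\gamma$ and $\nu=\alpha$ (so the hypothesis $\alpha_{\rm vdc}+\nu=2\delta+1>-1$ holds), together with the $O(1)$ prefactor $\Gamma(n+\alpha+1)/[\sqrt{2}\,n!\,\tilde N^\alpha]$, then delivers the main-term bound $\mathcal{O}(n^{-\min\{2\delta-\alpha+2,\gamma+3/2,3/2\}})$ matching the target.

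The Hilb remainder is treated in two regions. On $[cn^{-1},\theta_a]$ its pointwise contribution to $P_n^{(\alpha,\beta)}(\cos\theta)$ is of order $\theta\,\tilde N^{-3/2}/[\sin^{\alpha+1/2}(\theta/2)\cos^{\beta+1/2}(\theta/2)]$; paired with the algebraic-plus-$\sin\theta$ weight and $\psi(\cos\theta)$ it produces an integrable integrand that yields $\mathcal{O}(n^{-3/2})$. On $[0,cn^{-1}]$ the remainder is of order $\theta^2\,\tilde N^\alpha$ pointwise, and integration against the weight gives $\mathcal{O}(n^{\alpha-2\delta-4})$; a short case split on whether $2\delta-\alpha+2\le 3/2$ confirms this contribution is dominated by $n^{-\min\{2\delta-\alpha+2,3/2\}}$. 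Part~(ii) is strictly easier: the image interval $[\theta_b,\theta_a]\subset(0,\pi)$ avoids both endpoints, all algebraic weights are bounded, and Hilb plus Lemma~\ref{vdc}(ii) with $c=\theta_b>0$ and $\beta_{\rm vdc}=\gamma$ immediately yields $\mathcal{O}(n^{-\min\{\gamma+3/2,3/2\}})$, with a uniformly $\mathcal{O}(n^{-3/2})$ Hilb remainder.

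The main obstacle I foresee is the careful bookkeeping in part~(i): correctly isolating the leading algebraic singularities at both endpoints $\theta=0$ and $\theta=\theta_a$, verifying that the residual amplitude genuinely lies in $W_{\rm AC}$ rather than merely $L^\infty$ (this is where the absolute continuity hypothesis on $\psi$ is essential), and confirming that the inner-region remainder $\mathcal{O}(n^{\alpha-2\delta-4})$ never overtakes the main estimate across the full admissible range $\alpha,\beta,\gamma,\delta>-1$.
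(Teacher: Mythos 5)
Your proposal follows essentially the same route as the paper: the substitution $x=\cos\theta$, the Hilb-type formula of Theorem~\ref{Thm22} to convert the Jacobi polynomial into a Bessel transform with the combined weight $\theta^{2\delta-\alpha+1}$ near $\theta=0$ and $(\theta_a-\theta)^{\gamma}$ near $\theta=\theta_a$, the verification that the residual amplitude lies in $W_{\rm AC}$ with norm controlled by $\|\psi\|_{W_{\rm AC}}$, and an application of Lemma~\ref{vdc} with exactly the parameters you name. Your explicit two-region treatment of the Hilb remainder (which the paper absorbs into a single $\mathcal{O}(n^{-3/2})$ term) is a welcome extra check but does not change the argument.
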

\begin{proof} We make a change of variable $x=\cos\theta$  and denote $\theta_{0}=\arccos{a}.$ Then it follows from
 the Hilb-type formula (\ref{Hilb}) that
  \begin{equation}\label{Ja1}
    \begin{aligned}
    &\int_{a}^{1}\!(x-a)^{\gamma}(1-x)^{\delta}P_{n}^{(\alpha,\beta)}(x)\psi(x)\,\mathrm{d}x \\
    &\;\; =\int_{0}^{\theta_{0}}\!2^{\delta+1}\sin^{2\delta+1}\!\!\Big(\frac{\theta}{2}\Big)\cos\!\Big(\frac{\theta}{2}\Big)(\cos\theta-\cos\theta_{0})^{\gamma}P_{n}^{(\alpha,\beta)}(\cos\theta)\psi(\cos\theta)\,\mathrm{d}\theta\\
    &\;\;=\frac{\Gamma(n+\alpha+1)}{n!\tilde{N}^{\alpha}}\int_{0}^{\theta_{0}}\!\theta^{2\delta+1-\alpha}(\theta_{0}-\theta)^{\gamma}J_{\alpha}(\tilde{N}\theta)\Psi(\theta)\,\mathrm{d}\theta+\mathcal{O}(n^{-3/2}),
  \end{aligned}
  \end{equation}
  where $\Psi(\theta)=h(\theta)\psi(\cos\theta)$ and
  \begin{equation*}\label{Bel}
    h(\theta)= 2^{\alpha-\delta}\Big(\frac{\sin(\theta/2)}{\theta/2}\Big)^{2\delta-\alpha+1/2}\cos^{1/2-\beta}
    \Big(\frac{\theta}{2}\Big)\Big(\frac{\cos\theta-\cos\theta_{0}}{\theta_{0}-\theta}\Big)^{\gamma}.
  \end{equation*}
  One verifies readily that  $\Psi(\theta)$ is absolutely continuous on $[0,\theta_{0}]$ and $\Psi^{\prime}(\theta)\in L^{1}(0,\theta_{0})$.
  Then using \eqref{vdczero} in  Lemma \ref{vdc}  and the asymptotic property of the Gamma function (see \cite{Abram}):
  \begin{equation*}
    \lim_{n\to\infty}\frac{\Gamma(n+\alpha+1)}{n!\tilde{N}^{\alpha}}=1,
  \end{equation*}
   we obtain from \eqref{Ja1} that
  \begin{equation}\label{Phasmp}
  \int_{a}^{1}\!(x-a)^{\gamma}(1-x)^{\delta}P_{n}^{(\alpha,\beta)}(x)\psi(x)\,\mathrm{d}x=\|\Psi\|_{{W}_{\rm AC}(0,\theta_{0})} \cdot \mathcal{O}\big(n^{-\min\{2\delta-\alpha+2,\gamma+\frac{3}{2},\frac{3}{2}\}}\big).
  \end{equation}
 As
\begin{equation*}
 |\Psi(\theta)|\le C_0 |\psi(\cos\theta)|,\quad  |\Psi'(\theta)|\le C_1(|\psi(\cos\theta)|+|\psi'(\cos\theta)|\sin\theta),\quad \forall\, \theta\in [0,\theta_{0}],
  \end{equation*}
 for some constants $C_0, C_1$ independent of $\theta$, we derive from direct calculation that
\begin{equation}\label{WAC}
\begin{aligned}
\|\Psi\|_{{W}^{\rm AC}(0,\theta_{0})}&\le C\Big(\|\psi(\cos\theta)\|_{L^\infty(0,\theta_0)}+\int_0^{\theta_0}|\psi(\cos\theta)|{\rm d}\theta+\int_0^{\theta_0}|\psi'(\cos\theta)|\sin\theta {\rm d}\theta\Big)\\
&=C\Big(\|\psi\|_{L^\infty(a,1)}+\int_a^{1}|\psi(x)|\frac{{\rm d}x}{\sqrt{1-x^2}}+\int_a^{1}|\psi'(x)| {\rm d}x\Big)\\
&\le C\Big((\pi+1)\|\psi\|_{L^\infty(a,1)}+\int_a^{1}|\psi'(x)| {\rm d}x\Big)\\
& \le C(\pi+1)\|\psi\|_{W_{\rm AC}(a,1)}.
 \end{aligned}\end{equation}
Thus we claim \eqref{vdcorputJaca} from  \eqref{Phasmp} and \eqref{WAC}.

  Next for fixed $b<1$, we set $\theta_1=\arccos{b}.$ Using  \eqref{vdcnonzero} in  Lemma \ref{vdc} on $[\theta_1,\theta_0]$, we derive \eqref{vdcorputJacb} directly
   from the second identity in \eqref{Ja1} with $\delta=0$ and  $b=\cos \theta_1$, since 
  \begin{equation*}
    \int_{a}^{b}\!(x-a)^{\gamma}P_{n}^{(\alpha,\beta)}(x)\psi(x)\,\mathrm{d}x
    =\frac{\Gamma(n+\alpha+1)}{n!\tilde{N}^{\alpha}}\int_{\theta_1}^{\theta_{0}}\!(\theta_{0}-\theta)^{\gamma}J_{\alpha}(\tilde{N}\theta)\bar{\Psi}(\theta)
    \,\mathrm{d}\theta+\mathcal{O}(n^{-3/2}),
  \end{equation*}
  where
  \begin{equation*}
    \bar{\Psi}(\theta)=\sqrt{2\theta}\sin^{\frac{1}{2}-\alpha}\!\Big(\frac{\theta}{2}\Big)\cos^{\frac{1}{2}-\beta}\!\Big(\frac{\theta}{2}\Big)
    \Big(\frac{\cos\theta-\cos\theta_{0}}{\theta_{0}-\theta}\Big)^{\gamma}\psi(\cos\theta)\in W_{\rm AC}(\theta_1,\theta_{0}).
  \end{equation*}
Thus, following the same lines as the above for \eqref{WAC}, we can obtain \eqref{vdcorputJacb}.
\end{proof}

If $\psi(x)$ has more regularity, we denote $W_{\rm AC}^{m}(\Omega)$  for some positive integer $m$  as
\begin{equation*}
	W_{\rm AC}^{m}(\Omega)=\big\{\psi\,:\psi^{(k)}\in W_{\rm AC}(\Omega),\; k=0,\cdots, m\big\},
\end{equation*}
equipped with the norm
\begin{equation*}
	\|\psi\|_{W_{\rm AC}^{m}(\Omega)}=\sum_{k=0}^{m}\|\psi^{(k)}\|_{W_{\rm AC}(\Omega)}.
\end{equation*}
In particular, for $m=0,$ we have $W_{\rm AC}(\Omega)=W_{\rm AC}^{0}(\Omega).$
Then we can further derive the following estimate using Lemma \ref{van der Corput Jacobi}.

\begin{lemma}\label{JEC} Let $\alpha,\beta,\gamma>-1, a\in(-1,1)$ and $n\gg 1$. Denote by  $m=\lfloor\gamma\rfloor$  the greatest integer that is less than $\gamma$. If $\psi\in W_{\rm AC}^{m+1}(a,\frac{1+a}{2})\cap W_{\rm AC}^{m+2}(\frac{1+a}{2},1)$, then we have
  \begin{subequations}\label{JC2}
  \begin{equation}\label{JC2-1}
  \begin{split}
    &\int_{a}^{1}\!(x-a)^{\gamma}\psi(x) P_{n}^{(\alpha,\beta)}(x)\omega^{(\alpha,\beta)}(x)\,\mathrm{d}x\\
    &\qquad=\big(\|\psi\|_{W^{m+1}_{\rm AC}(a,\frac{1+a}{2})}+\|\psi\|_{W^{m+2}_{\rm AC}(\frac{1+a}{2},1)}\big)\cdot \mathcal{O}(n^{-\gamma-\frac32}).
    \end{split}
  \end{equation}
  If $\psi\in W_{\rm AC}^{m+1}(\frac{-1+a}{2},a)\cap W^{m+2}_{\rm AC}(-1,\frac{-1+a}{2})$, then
  \begin{equation}\label{JC2-2}
  \begin{split}
    &\int_{-1}^{a}\!(a-x)^{\gamma}\psi(x)P_{n}^{(\alpha,\beta)}(x)\omega^{(\alpha,\beta)}(x)\,\mathrm{d}x\\
    &\qquad=\big(\|\psi\|_{W_{\rm AC}^{m+1}(\frac{-1+a}{2},a)}+\|\psi\|_{W_{\rm AC}^{m+2}(-1,\frac{-1+a}{2})}\big)\cdot \mathcal{O}(n^{-\gamma-\frac32}).
  \end{split}
  \end{equation}
  \end{subequations}
 \end{lemma}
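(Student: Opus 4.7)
The plan is to combine iterated integration by parts against the Jacobi weight with the van der Corput-type estimate (\ref{vdcorputJaca}). The key tool is the classical derivative identity
\[
\omega^{(\alpha,\beta)}(x)\,P_n^{(\alpha,\beta)}(x)=-\frac{1}{n+\alpha+\beta+1}\,\frac{d}{dx}\!\left[\omega^{(\alpha+1,\beta+1)}(x)\,P_{n-1}^{(\alpha+1,\beta+1)}(x)\right],
\]
which supplies an antiderivative of the weighted Jacobi polynomial that simultaneously carries a factor $O(n^{-1})$, raises the parameters $(\alpha,\beta)$ by one, and lowers the polynomial degree by one. Setting $u(x)=(x-a)^{\gamma}\psi(x)$, one IBP on $(a,1)$ kills the boundary term at $x=1$ via the new factor $(1-x)^{\alpha+1}$ and the one at $x=a$ via the assumption $\gamma>m\ge 0$.

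I would then iterate this procedure exactly $m+1$ times. At the $j$-th step ($1\le j\le m+1$), the Leibniz expansion of $[(x-a)^{\gamma}\psi]^{(j-1)}$ evaluated at $x=a$ only contains terms carrying a factor $(x-a)^{\gamma-l}$ with $l\le j-1\le m<\gamma$, so every boundary residue vanishes identically. What remains after the $(m+1)$-st iteration is
\[
\int_a^1(x-a)^{\gamma}\psi\,\omega^{(\alpha,\beta)}P_n^{(\alpha,\beta)}\,dx=\frac{(-1)^{m+1}}{\prod_{j=1}^{m+1}(n+\alpha+\beta+j)}\int_a^1 [(x-a)^{\gamma}\psi]^{(m+1)}\omega^{(\alpha+m+1,\beta+m+1)}P_{n-m-1}^{(\alpha+m+1,\beta+m+1)}\,dx,
\]
so the prefactor is already $O(n^{-(m+1)})$, and the Leibniz rule expands the inner derivative into a finite sum of terms of the form $C_{j}(x-a)^{\gamma-j}\psi^{(m+1-j)}(x)$ for $j=0,\ldots,m+1$.

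To each such piece I apply (\ref{vdcorputJaca}) with parameters $\alpha\mapsto\alpha+m+1$, $\beta\mapsto\beta+m+1$, $\delta=\alpha+m+1$, and inner function $\psi^{(m+1-j)}(x)(1+x)^{\beta+m+1}$. For $j\le m$ the exponent $\gamma-j$ is strictly positive so $\gamma-j+\tfrac{3}{2}>\tfrac{3}{2}$ and the $\min$ in (\ref{vdcorputJaca}) equals $\tfrac{3}{2}$, producing $O(n^{-3/2})$; for $j=m+1$ the exponent $\gamma-m-1\in(-1,0]$ lies in the singular regime and the rate becomes $O(n^{-(\gamma-m+1/2)})$. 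Multiplied by the $n^{-(m+1)}$ prefactor from the iterated IBP, the term $j=m+1$ furnishes the dominant contribution $O(n^{-\gamma-3/2})$, matching (\ref{JC2-1}). The $W_{\rm AC}(a,1)$-regularity demanded of the inner function reduces to $\psi^{(m+2)}\in L^{1}(a,1)$ together with continuity of $\psi^{(m+1)}$ across $c=\tfrac{1+a}{2}$, which the asymmetric hypothesis $\psi\in W^{m+1}_{\rm AC}(a,c)\cap W^{m+2}_{\rm AC}(c,1)$ provides; bounding the global $W_{\rm AC}$-norm by the sum of its restrictions to the two subintervals produces precisely the two norms appearing on the right of (\ref{JC2-1}).

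The main obstacle is the careful bookkeeping: one must certify at every one of the $m+1$ IBP steps that the boundary residue at $x=a$ truly vanishes through the Leibniz expansion, and after the last step one must identify the single slowest-decaying piece among the $m+2$ Leibniz contributions to match the target rate exactly. A minor delicacy is the endpoint case $\gamma=m+1$ (integer $\gamma$), where the extremal exponent $\gamma-m-1=0$ yields the constant factor $(x-a)^{0}=1$; this is still admissible since the van der Corput rate is then $n^{-\min\{3/2,\,3/2\}}=n^{-3/2}$ and the accumulated prefactor $n^{-(m+1)}=n^{-\gamma}$ combines to the desired $n^{-\gamma-3/2}$. The twin estimate (\ref{JC2-2}) on $(-1,a)$ follows by the involution $x\mapsto -x$ combined with $P_n^{(\alpha,\beta)}(-x)=(-1)^nP_n^{(\beta,\alpha)}(x)$, which interchanges the roles of the two endpoints and of the parameters $(\alpha,\beta)$ and reduces the claim to the case just treated.
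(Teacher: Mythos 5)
Your overall strategy---iterating the derivative identity for the weighted Jacobi polynomial and then invoking the van der Corput-type Lemma \ref{van der Corput Jacobi}---is the same as the paper's, and your handling of the vanishing boundary residues at $x=a$ and the identification of the $j=m+1$ Leibniz term as dominant are correct. (A cosmetic slip: the constant in your derivative identity should be $-\tfrac{1}{2n}$, as in the Rodrigues formula \eqref{Rodrigues} with $k=1$, not $-\tfrac{1}{n+\alpha+\beta+1}$; both are ${\cal O}(n^{-1})$, so this is harmless.) However, there is a genuine gap in the final step. You apply the global estimate \eqref{vdcorputJaca} on all of $(a,1)$ and then assert that the $\min$ equals $\tfrac32$ (for $j\le m$) or $\gamma-m+\tfrac12$ (for $j=m+1$), silently discarding the first entry $2\delta-\alpha+2$ of the $\min$. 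After $m+1$ integrations by parts this entry equals $\alpha+m+3$. For $m\ge 0$ it exceeds $2$ and is indeed never binding, but for $m=-1$ (i.e.\ $\gamma\in(-1,0]$, where no integration by parts is performed at all) it equals $\alpha+2$, and whenever $\alpha<\gamma-\tfrac12$ (possible for $\gamma\in(-\tfrac12,0]$ and $\alpha$ close to $-1$) your argument only delivers ${\cal O}(n^{-\alpha-2})$, which falls short of the claimed ${\cal O}(n^{-\gamma-\frac32})$. The entry $2\delta-\alpha+2$ encodes a real contribution of the endpoint $x=1$ in \eqref{vdczero}, so it cannot simply be dropped.

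This is exactly the difficulty the paper's proof is built to circumvent, and it explains the asymmetric hypotheses that you noticed but misattributed. The paper splits $(a,1)$ at the midpoint $\tfrac{1+a}{2}$: on $(a,\tfrac{1+a}{2})$ it uses the subinterval estimate \eqref{vdcorputJacb}, whose rate $\min\{\gamma'+\tfrac32,\tfrac32\}$ carries no $x=1$ endpoint term; on $(\tfrac{1+a}{2},1)$ it performs \emph{one additional} integration by parts, gaining an extra factor $n^{-1}$ there (at the price of a boundary term at the midpoint of size ${\cal O}(n^{-m-\frac52})$, controlled via $P_{n}^{(\alpha,\beta)}(\tfrac{1+a}{2})={\cal O}(n^{-\frac12})$, and of requiring one more derivative of $\psi$ on that piece---hence $W^{m+2}_{\rm AC}(\tfrac{1+a}{2},1)$ versus $W^{m+1}_{\rm AC}(a,\tfrac{1+a}{2})$). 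To repair your proof you need this splitting (or an equivalent device to suppress the $x=1$ contribution when $\gamma\in(-1,0]$ and $\alpha$ is near $-1$); the rest of your bookkeeping can stand.
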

\begin{proof}
Recall the Rodrigues' formula of Jacobi polynomials (see \cite[pp.\! 94]{Szego}):
\begin{equation}\label{Rodrigues}
  \omega^{(\alpha,\beta)}(y)P_{n}^{(\alpha,\beta)}(y)=\frac{(-1)^{k}}{2^{k}(n)_{k}}\frac{\mathrm{d}^{k}}{\mathrm{d}y^{k}}
  \big(\omega^{(\alpha+k,\beta+k)}(y)P_{n-k}^{(\alpha+k,\beta+k)}(y)\big),\quad k\in\mathbb{N},
\end{equation}
where $(n)_{k}=n(n-1)\cdots(n-k+1)$ denotes the falling factorial.  Using \eqref{Rodrigues} and integration by parts, we
find from Lemma \ref{van der Corput Jacobi}  and the fact $P_{n}^{(\alpha,\beta)}(\frac {1+a} 2)=\mathcal{O}(n^{-\frac12})$
(see Theorem \ref{Thm23}) that
\begin{equation}\label{la3p}
\begin{split}
&\int_{a}^{1}\!(x-a)^{\gamma}\psi(x)P_{n}^{(\alpha,\beta)}(x)\omega^{(\alpha,\beta)}(x)\,\mathrm{d}x\\	
&\quad  =\frac{1}{2^{m+1}(n)_{m+1}}\int_{a}^{1}
\left[(x-a)^{\gamma}\psi(x)\right]^{(m+1)}P_{n-m-1}^{(\alpha+m+1,\beta+m+1)}(x)\omega^{(\alpha+m+1,\beta+m+1)}(x)\,\mathrm{d}x\\
& \quad =\frac{1}{2^{m+1}(n)_{m+1}}\int_{a}^{\frac{1+a}{2}}
\left[(x-a)^{\gamma}\psi(x)\right]^{(m+1)}P_{n-m-1}^{(\alpha+m+1,\beta+m+1)}(x)\omega^{(\alpha+m+1,\beta+m+1)}(x)\,\mathrm{d}x\\
&\quad\;\;  -\frac{1}{2^{m+2}(n)_{m+2}}\left[(x-a)^{\gamma}\psi(x)\right]^{(m+1)}P_{n-m-2}^{(\alpha+m+2,\beta+m+2)}(x)\omega^{(\alpha+m+2,\beta+m+2)}(x)\big|_{\frac{1+a}{2}}^1\\
&\quad\;\;  +\frac{1}{2^{m+2}(n)_{m+2}}\int_{\frac{1+a}{2}}^1\!
\left[(x-a)^{\gamma}\psi(x)\right]^{(m+2)}P_{n-m-2}^{(\alpha+m+2,\beta+m+2)}(x)
\omega^{(\alpha+m+2,\beta+m+2)}(x)\,\mathrm{d}x\\
&\quad  = \|\psi\|_{W_{\rm AC}^{m+1}(a,\frac{1+a}{2})} \cdot \mathcal{O}(n^{-\gamma-\frac32})+\left[(x-a)^{\gamma}\psi(x)\right]^{(m+1)}\big|_{x=\frac{1+a}2}\cdot  \mathcal{O}(n^{-m-\frac{5}{2}})
\\&\qquad
+\|\psi\|_{W_{\rm AC}^{m+2}(\frac{1+a}{2},1)}\cdot \mathcal{O}(n^{-m-\frac{7}{2}}).
\end{split}
 \end{equation}
This leads to (\ref{JC2-1})  by the fact that $\gamma+\frac{3}{2}\le  m+\frac 5 2$.

We can obtain  (\ref{JC2-2})  directly using $P_{n}^{(\alpha,\beta)}(-x)=(-1)^{n}P_{n}^{(\beta,\alpha)}(x)$.
\end{proof}

\subsection{Analysis of $a_{n}^{(\alpha,\beta)}(x;g)$}
We now turn to the optimal asymptotic estimates of $a_{n}^{(\alpha,\beta)}(x;g)$ in \eqref{Jcoe}-\eqref{Jcoe2}, which is of paramount importance in Theorem \ref{Thm13}.
\begin{theorem}\label{Thm32}
	For $\alpha,\beta >-1$  and $a_{n}^{(\alpha,\beta)}(x;g)$ defined in \eqref{Jcoe}, we have
	\begin{equation}\label{coe1}
        a_{n}^{(\alpha,\beta)}(x;g)
        =\begin{dcases}
        |x-a|^{-1}{\cal O}(n^{-\lambda-\frac12}),&x\in [-1,a)\cup (a,1], \;\; \lambda>-1, \\[6pt]
        {\cal O}(n^{-\lambda+\frac12}), &x=a, \;\; \lambda>0,
        \end{dcases}
\end{equation}
and
\begin{equation}\label{coe11}
 a_{n}^{(\alpha,\beta)}(x;g)={\cal O}(n^{-\lambda+\frac{1}{2}}) \quad\mbox{for $\lambda>0$ and $\forall\,x\in [-1,1]$},
\end{equation}
where the constants in $\mathcal{O}$-terms are independent of $x$.
\end{theorem}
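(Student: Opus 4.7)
The plan is to analyze the integral in \eqref{Jcoe}--\eqref{Jcoe2} case by case, using Lemma \ref{JEC} together with the asymptotic $\sigma_n^{(\alpha,\beta)}=\Theta(n^{-1})$ extracted from \eqref{eq:jacexpcoeffs0}. I would first dispatch the diagonal case $x=a$ (with $\lambda>0$): by \eqref{Jcoe}--\eqref{Jcoe2},
$\sigma_n^{(\alpha,\beta)} a_n^{(\alpha,\beta)}(a;g)=\int_a^1 z(y)(y-a)^{\lambda-1}P_n^{(\alpha,\beta)}(y)\omega^{(\alpha,\beta)}(y)\,\mathrm{d}y$, which is exactly the setting of Lemma \ref{JEC} \eqref{JC2-1} with $\gamma=\lambda-1>-1$ and smooth $\psi=z$. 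This yields $\mathcal{O}(n^{-\lambda-1/2})$ for the integral, and dividing by $\sigma_n^{(\alpha,\beta)}$ produces the $\mathcal{O}(n^{-\lambda+1/2})$ bound.

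For $x\in(a,1]$ I would split the integral at $y=a$. On $[-1,a]$, $\phi(y)=f(x)/(x-y)$ is smooth in $y$ (since $x>a\ge y$), so iterated integration by parts through the Rodrigues formula \eqref{Rodrigues} produces boundary contributions at $y=a$ of size $\phi^{(j)}(a)\cdot n^{-j-3/2}\sim z(x)(x-a)^{\lambda-j-1}n^{-j-3/2}$ (using Corollary \ref{Bineua} to bound $|P^{(\alpha+j+1,\beta+j+1)}_{n-j-1}(a)|=\mathcal{O}(n^{-1/2})$ and observing that the endpoint $y=-1$ contributes nothing since $\omega^{(\alpha+j+1,\beta+j+1)}(-1)=0$). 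Tuning the number of integrations by parts against $|x-a|$ relative to $n^{-1}$ yields the target bound $|x-a|^{-1}\mathcal{O}(n^{-\lambda-3/2})$. On $[a,1]$ the crucial algebraic step is
\begin{equation*}
	z(x)(x-a)^\lambda - z(y)(y-a)^\lambda = z(x)\bigl[(x-a)^\lambda-(y-a)^\lambda\bigr] + \bigl(z(x)-z(y)\bigr)(y-a)^\lambda,
\end{equation*}
which decomposes $g_3$ as $z(x)\,U(x,y)+T(x,y)(y-a)^\lambda$, where $T(x,y)=\frac{z(x)-z(y)}{x-y}$ is smooth in $y$ (including the removable point $y=x$) and $U(x,y)=\frac{(x-a)^\lambda-(y-a)^\lambda}{x-y}$ is the divided difference of $u^\lambda$. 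The $T$-piece is handled immediately by Lemma \ref{JEC} \eqref{JC2-1} with $\gamma=\lambda$, giving $\mathcal{O}(n^{-\lambda-3/2})$. The $U$-piece is treated via the integral representation $U(x,y)=\lambda\int_0^1 [(y-a)+r(x-y)]^{\lambda-1}\,\mathrm{d}r$; after interchanging the order of integration, Lemma \ref{JEC} is applied uniformly in $r$, and the $|x-a|^{-1}$ factor emerges from the parametric scale of the resulting bound.

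The symmetric case $x\in[-1,a)$ reduces to the previous one by the reflection identity $P_n^{(\alpha,\beta)}(-x)=(-1)^n P_n^{(\beta,\alpha)}(x)$, which swaps the roles of $\alpha$ and $\beta$. Finally, the uniform bound \eqref{coe11} for $\lambda>0$ is obtained by combining \eqref{coe1} (already $\mathcal{O}(n^{-\lambda+1/2})$ whenever $|x-a|\gtrsim n^{-1}$) with the continuity of the coefficient integral in $x$ and the $x=a$ estimate when $|x-a|\lesssim n^{-1}$; note that $g(x,y)$ is bounded in a neighbourhood of $y=a$ when $\lambda>0$, which is precisely what makes the uniform bound available for this range.

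I expect the main technical obstacle to be the analysis of the divided-difference integral involving $U(x,y)$. Although $U$ is continuous on $[a,1]$, its $y$-derivatives blow up like $(y-a)^{\lambda-k}$ near $y=a$, so neither Lemma \ref{JEC} nor a direct integration by parts applies off-the-shelf. Extracting the exact $|x-a|^{-1}$ factor while preserving the $n^{-\lambda-3/2}$ decay order is the technical heart of the argument and forces the use of the parametric representation just described, with a careful book-keeping of the $r$-uniform constants produced by Lemma \ref{JEC}.
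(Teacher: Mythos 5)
Your handling of $x=a$ and of the $T$-piece on $[a,1]$ coincides with the paper's, but the core of your argument for $x\in(a,1]$ has a genuine gap: you split the coefficient integral at $y=a$ and claim to bound the $[-1,a]$ piece and the $U$-piece \emph{separately} by $|x-a|^{-1}\mathcal{O}(n^{-\lambda-3/2})$, and neither bound is true for $\lambda>0$. For the $[-1,a]$ piece, the very first boundary contribution at $y=a$ in your own expansion (the $j=0$ term) equals $z(x)(x-a)^{\lambda-1}\,\omega^{(\alpha+1,\beta+1)}(a)P_{n-1}^{(\alpha+1,\beta+1)}(a)/(2n)\asymp (x-a)^{\lambda-1}n^{-3/2}$ along a subsequence of $n$, and no choice of the number of integrations by parts removes it; for fixed $x$ away from $a$ and $\lambda>0$ this is $n^{-3/2}\gg n^{-\lambda-3/2}$, so ``tuning'' cannot reach the target. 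The $U$-piece has exactly the same defect: $U(x,a)=(x-a)^{\lambda-1}\neq 0$, and your parametric representation only yields $\lambda\int_0^1[r(x-a)]^{\lambda-1}\,\mathrm{d}r\cdot\mathcal{O}(n^{-3/2})=(x-a)^{\lambda-1}\mathcal{O}(n^{-3/2})$, again short of the target by a factor of order $((x-a)n)^{\lambda}$. What you are missing is that these two leading boundary terms carry opposite signs and cancel (this is just the continuity of $g_3(x,\cdot)$ across $y=a$), so the two pieces must be estimated together, not one at a time.

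This is precisely why the paper uses a different decomposition (see \eqref{case31}): it adds and subtracts $\frac{z(x)(a-y)^{\lambda}}{x-y}$ on $[-1,a]$ so that the first piece vanishes to order $\lambda$ at $y=a$ (hence no interior boundary terms under the Rodrigues-based integration by parts), and merges your left piece and $U$-piece into the single integral $\int_{-1}^{1}\frac{(x-a)^{\lambda}-|y-a|^{\lambda}}{x-y}z(x)P_{n}^{(\alpha,\beta)}(y)\omega^{(\alpha,\beta)}(y)\,\mathrm{d}y$ over the whole interval; since this integrand has $m+1$ derivatives in $L^1(-1,1)$, the $m+1$ integrations by parts generate no boundary terms at $y=a$, and only afterwards is the integral split at $y=a$, with the resulting weights $\psi_1,\psi_2$ controlled in $W_{\rm AC}$ by a monotonicity argument that produces the clean $|x-a|^{-1}$ constant. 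Two secondary issues: your reduction of $x<a$ to $x>a$ by reflection is not literal (it maps $f$ to a left-sided function of type \eqref{invgpfun}; in fact $x<a$ is the easier one-sided integral over $[a,1]$, which still needs the Leibniz bookkeeping of the paper to get a constant $|x-a|^{-1}$ rather than a higher negative power of $|x-a|$); and the uniform bound \eqref{coe11} on $|x-a|\lesssim n^{-1}$ does not follow from ``continuity in $x$'' at fixed $n$ --- the paper obtains it by stopping the integration by parts one step earlier so that the resulting weight is bounded uniformly in $x$.
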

\begin{proof}

For clarity, we carry out the proof in three cases: (i) $x<a$; (ii) $x=a$ and (iii) $x>a$. Below
we just provide the detailed  proof for the cases (i) and (ii), but sketch that of the third case in Appendix \ref{AppendixA}
to avoid unnecessary repetition.

{(i) $x<a$}: For each fixed $x<a$,  one verifies readily from  \eqref{JC2} and $\sigma_{n}^{(\alpha,\beta)}=\mathcal{O}(n^{-1})$ that
\begin{equation*}
\begin{aligned}
a_{n}^{(\alpha,\beta)}(x;g)&=\frac{1}{\sigma_{n}^{(\alpha,\beta)}}\int_{a}^{1}\!(y-a)^{\lambda}\frac{z(y)}{y-x}P_{n}^{(\alpha,\beta)}(y)
\omega^{(\alpha,\beta)}(y)\,\mathrm{d}y=\Big\|\frac{z(y)}{y-x}\Big\|_{W^{m+2}_{\rm AC}(a,1)}\cdot\mathcal{O}\big(n^{-\lambda-\frac12}\big),
\end{aligned}
\end{equation*}
where  $m=\lfloor \lambda\rfloor$ is defined as that in Lemma \ref{JEC}.  Although the asymptotic order $n^{-\lambda-\frac{1}{2}}$ in above is optimal, but the constant before it is not well controlled when $x$ is closed to $a$. To obtain \eqref{coe1} and \eqref{coe11}, for simplicity, we redefine $m$ as  $m=\lambda-1$ if $\lambda$ is an integer, otherwise $m=\lfloor \lambda\rfloor$.

Applying the Rodrigues' formula (\ref{Rodrigues}), we obtain
\begin{equation}\label{case1}\begin{aligned}
  a_{n}^{(\alpha,\beta)}(x;g)=\frac{1}{2^{m+1}(n)_{m+1}\sigma_{n}^{(\alpha,\beta)}}\int_{a}^{1}\!&(y-a)^{\lambda-m-1}\frac{\phi_{m+1}(x,y)}{y-x}\\
  &\times P_{n-m-1}^{(\alpha+m+1,\beta+m+1)}(y)\omega^{(\alpha+m+1,\beta+m+1)}(y)\,\mathrm{d}y,
  \end{aligned}
\end{equation}
and according to the Leibniz's rule, we have
\begin{equation}\label{g1}
\begin{aligned}
  &\partial_{y}^{m+1}g_{1}(x,y)=(y-a)^{\lambda-m-1}\frac{\phi_{m+1}(x,y)}{y-x}\\
  \phi_{m+1}(x,y)=\sum_{k=0}^{m+1}\sum_{j=0}^{k}&\frac{(-1)^{k-j}(m+1)!(\lambda)_{j}}{j!(m+1-k)!}z^{(m+1-k)}(y)(y-a)^{m+1-k}\Big(\frac{y-a}{y-x}\Big)^{k-j}.
\end{aligned}
\end{equation}
It is not difficult to verify that there exist two constants $C_{1}$ and $C_{2}$ independent of $x$ such that
\begin{equation*}
  \max_{y\in[a,\frac{1+a}{2}]}\left|\phi_{m+1}(x,y)\right|\leq C_{1},\quad\quad \max_{y\in[a,\frac{1+a}{2}]}\left|\partial_{y}\phi_{m+1}(x,y)\right|\leq \frac{C_{2}}{y-x}.
\end{equation*}
As a result, we can derive the estimate (\ref{coe1}) from Lemma \ref{JEC} since
$$\Big\|\frac{\phi_{m+1}}{y-x}\Big\|_{W_{\rm AC}(a,\frac{1+a}{2})}\leq C|x-a|^{-1}$$
for some constant $C$ independent of $x$, and $\frac{\phi_{m+1}(x,y)}{y-x}$ is smooth for $x<a$ and $\frac {a+1}{2}\le y\le 1$ (i.e. $\|\frac{\phi_{m+1}}{y-x}\|_{W^1_{\rm AC}(\frac{1+a}{2},1)}$ is uniformly bounded by a constant independent of $x$).

In order to obtain the uniformly estimate (\ref{coe11}) for $a_{n}^{(\alpha,\beta)}(x;g)$ for any $x\in[-1,a)$,
we conduct integration by parts till  $m$ instead of $m+1$  in \eqref{case1} and find
\begin{equation*}
\partial_{y}^{m} g_1(x, y)
= {(y-a)^{\lambda-m-1}} \Big(\frac{y-a}{y-x}\phi_m(x,y)\Big).
\end{equation*}
 Note that these two functions in $x$
 $$\frac{y-a}{y-x}\phi_m(x,y)\quad\text{and} \quad \int_a^1\Big|\partial_y\Big(\frac{y-a}{y-x}\phi_m(x,y)\Big)\Big|\,\mathrm{d}y$$
 are  uniformly bounded on $(x,y)\in[-1,a)\times(a,1)$, and $\|\frac{\phi_{m+1}}{y-x}\|_{W^1_{\rm AC}(\frac{1+a}{2},1)}$ is uniformly bounded  independent of $x$ too. Thus we obtain the uniform bound \eqref{coe11}.

\medskip

{(ii) $x=a$}: It  follows from  \eqref{JC2} directly that
\begin{equation*}\begin{aligned} a_{n}^{(\alpha,\beta)}(a;g)=\frac{1}{\sigma_{n}^{(\alpha,\beta)}}\int_{a}^{1}\!(y-a)^{\lambda-1}z(y)P_{n}^{(\alpha,\beta)}(y)\omega^{(\alpha,\beta)}(y)\,\mathrm{d}y
=\mathcal{O}(n^{-\lambda+\frac12}).
\end{aligned}\end{equation*}

{(iii) $x>a$}:\; See Appendix \ref{AppendixA} for a sketch.
\end{proof}


\subsection{Proof of Theorem \ref{Thm13} for \eqref{gpfun}}

From (\ref{Jtruerr}) and  (\ref{coe1}),
 it follows that  for $ x\in [-1,a)\cup (a,1],$
\begin{equation}\label{re3}
\begin{split}
{\displaystyle e_f^{(\alpha,\beta)}(n,x)} &={\displaystyle \frac{2(n+1)(n+\alpha+\beta+1)}{(2n+\alpha+\beta+2)(2n+\alpha+\beta+1)}a_{n}^{(\alpha,\beta)}(x;g)P_{n+1}^{(\alpha,\beta)}(x)}\\
&\quad -{\displaystyle  \frac{2(n+\alpha+1)(n+\beta+1)}{(2n+\alpha+\beta+2)(2n+\alpha+\beta+3)}a_{n+1}^g(x,\alpha,\beta)P_n^{(\alpha,\beta)}(x)}\\
&={\displaystyle  |x-a|^{-1}{\displaystyle   {\cal O}(n^{-\lambda-\frac{1}{2}})}\big(|P_n^{(\alpha,\beta)}(x)|+|P_{n+1}^{(\alpha,\beta)}(x)|\big)},
\end{split}
 \end{equation}
while for $x=a,$
\begin{equation}\label{re4}
{\displaystyle e_f^{(\alpha,\beta)}(n,a)=\big(|P_n^{(\alpha,\beta)}(a)|+|P_{n+1}^{(\alpha,\beta)}(a)|\big) \, {\cal O}(n^{-\lambda+\frac{1}{2}}),
} \end{equation}
 which,
together with Theorem \ref{Thm23} on $P_k^{(\alpha,\beta)}(x)={\cal O}( n^{-\frac{1}{2}})$ ($k=n,n+1$) for
fixed $x\in (-1,1)$,
yields
$$e_f^{(\alpha,\beta)}(n,x)\le C(x)n^{-\lambda-1}\,\,(x\not=a),\quad e_f^{(\alpha,\beta)}(n,a)\le Cn^{-\lambda}.$$
Here,  $C$ and $C(x)$ are independent of $n$. Furthermore,  from Corollary \ref{Bineua}  on $P_n^{(\alpha,\beta)}(1\pm \xi)$,
we deduce that $C(x)$ behaves like  \eqref{PerrJac1} near $x=a,\pm 1$.
For $x=\pm 1$,   using the properties \cite[(7.32.2)]{Szego}:
$$P_n^{(\alpha,\beta)}(1)={\cal O}\big( n^{\alpha}\big),\quad P_n^{(\alpha,\beta)}(-1)={\cal O}\big( n^{\beta}\big),
$$
and \eqref{re3}, we obtain (\ref{PerrJac2}), i.e.,
$$e_f^{(\alpha,\beta)}(n,1)\le C n^{\alpha-\frac{1}{2}-\lambda},\quad e_f^{(\alpha,\beta)}(n,-1)\le C n^{\beta-\frac 1 2-\lambda},
$$
where  $C$ is a positive constant independent of $n$.

At this point, we have  completed the proof of Theorem \ref{Thm13}  for (\ref{gpfun}) except for the case where $x=a$ and $\lambda>0$ is an even integer.

 Indeed, if $\lambda$ is an even integer, we apply the Rodrigues formula \eqref{Rodrigues} $\lambda$ times, and use Lemma \ref{JEC}, leading to
	\begin{equation}\label{Intcase}
	\begin{split}
		a_{n}^{(\alpha,\beta)}(a;g)
		& =-\frac{g_{2}^{(\lambda-1)}(y)P_{n-\lambda}^{(\alpha+\lambda,\beta+\lambda)}(y)\omega^{(\alpha+\lambda,\beta+\lambda)}(y)}
{2^{\lambda}(n)_{\lambda}\sigma_{n}^{(\alpha,\beta)}}\bigg|_{a}^{1}\\
		&\quad +\frac{1}{2^{\lambda}(n)_{\lambda}\sigma_{n}^{(\alpha,\beta)}}\int_{a}^{1}\!g_{2}^{(\lambda)}(y)P_{n-\lambda}^{(\alpha+\lambda,\beta+\lambda)}(y)\omega^{(\alpha+\lambda,\beta+\lambda)}(y)\,\mathrm{d}y\\
		& =\frac{(\lambda-1)!(1-a)^{\alpha+\lambda}(1+a)^{\beta+\lambda}P_{n-\lambda}^{(\alpha+\lambda,\beta+\lambda)}(a)z(a)}{2^{\lambda}(n)_{\lambda}\sigma_{n}^{(\alpha,\beta)}}\\
		&\quad +\frac{1}{2^{\lambda}(n)_{\lambda}\sigma_{n}^{(\alpha,\beta)}}\int_{a}^{1}\!g_{2}^{(\lambda)}(y)
P_{n-\lambda}^{(\alpha+\lambda,\beta+\lambda)}(y)\omega^{(\alpha+\lambda,\beta+\lambda)}(y)\,\mathrm{d}y
\\
&=	\frac{(\lambda-1)!(1-a)^{\alpha+\lambda}(1+a)^{\beta+\lambda}P_{n-\lambda}^{(\alpha+\lambda,\beta+\lambda)}(a)z(a)}{2^{\lambda}(n)_{\lambda}\sigma_{n}^{(\alpha,\beta)}}
+\mathcal{O}(n^{-\lambda-\frac12}),
\end{split}
	\end{equation}
where we used $g_2$ is smooth for $y\in[a,1]$ and
\begin{equation*}	
\begin{split}	g_{2}^{(\lambda-1)}(a) & =\big[(y-a)^{\lambda-1}z(y)\big]^{(\lambda-1)}=\sum_{k=0}^{\lambda-1}\binom{\lambda-1}{k}(\lambda-1)_{k}(y-a)^{\lambda-1-k}z^{(\lambda-1-k)}(y)\Big|_{y=a}\\
& =(\lambda-1)!\,z(a).
\end{split}
	\end{equation*}
Then from (\ref{Intcase}) and (\ref{re3}) we have that   for $x=a$,
\begin{align}\label{even1}
&e_f^{(\alpha,\beta)}(n,a) \notag \\
&=A_n^{(\alpha,\beta)}\frac{(\lambda-1)! (1-a)^{\alpha+\lambda}(1+a)^{\beta+\lambda}z(a)}{2^{\lambda}}
\frac{P_{n-\lambda}^{(\alpha+\lambda,\beta+\lambda)}(a)P_{n+1}^{(\alpha,\beta)}(a)}{ (n)_{\lambda}\, \sigma_{n}^{(\alpha,\beta)}} \notag  \\
&\qquad -B_n^{(\alpha,\beta)}\frac{(\lambda-1)! (1-a)^{\alpha+\lambda}(1+a)^{\beta+\lambda}z(a)}{2^{\lambda}}
\frac{P_{n+1-\lambda}^{(\alpha+\lambda,\beta+\lambda)}(a)P_{n}^{(\alpha,\beta)}(a)}{ (n+1)_{\lambda}\, \sigma_{n+1}^{(\alpha,\beta)}}
+ {\cal O}(n^{-\lambda-1})\\
&=A_n^{(\alpha,\beta)}\frac{(\lambda-1)! (1-a)^{\alpha+\lambda}(1+a)^{\beta+\lambda}z(a)}{2^{\lambda}(n)_{\lambda}\, \sigma_{n}^{(\alpha,\beta)}}\Big[P_{n-\lambda}^{(\alpha+\lambda,\beta+\lambda)}(a)P_{n+1}^{(\alpha,\beta)}(a) \notag \\
&\qquad -P_{n+1-\lambda}^{(\alpha+\lambda,\beta+\lambda)}(a)P_{n}^{(\alpha,\beta)}(a)\Big]+ {\cal O}(n^{-\lambda-1}), \notag
\end{align}
 where we used the factor $P_{n+1-\lambda}^{(\alpha+\lambda,\beta+\lambda)}(a)P_{n}^{(\alpha,\beta)}(a)={\cal O}(n^{-1})$ and
 $$
B_n^{(\alpha,\beta)}=A_n^{(\alpha,\beta)}(1+{\cal O}(n^{-2})),\quad \frac{1}{ (n+1)_{\lambda}\, \sigma_{n+1}^{(\alpha,\beta)}}= \frac{1}{ (n)_{\lambda}\, \sigma_{n}^{(\alpha,\beta)}}\left(1+{\cal O}(n^{-1})\right).
$$
Moreover, we shall show that for even integer $\lambda>0,$
\begin{equation}\label{JacPoly}
P_{n-\lambda}^{(\alpha+\lambda,\beta+\lambda)}(a)P_{n+1}^{(\alpha,\beta)}(a)-P_{n+1-\lambda}^{(\alpha+\lambda,\beta+\lambda)}(a)P_{n}^{(\alpha,\beta)}(a)={\cal O}(n^{-2}),
 \end{equation}
which implies a cancellation happens and gains one order  higher in convergence rate. To this end, we use the asymptotic property in \cite[Theorem 8.21.8]{Szego}:
For real $\alpha$ and $\beta,$ we have
  \begin{equation}\label{Hilb2}
     P_n^{(\alpha,\beta)}(\cos\theta)
    =(n\pi)^{-\frac{1}{2}}\sin^{-\alpha-\frac{1}{2}}\Big(\frac{\theta}{2}\Big)\cos^{-\beta-\frac{1}{2}}\Big(\frac{\theta}{2}\Big)
     \cos({\tilde N}\theta+\gamma)
     +{\cal O}\big({n}^{-\frac{3}{2}}\big),\quad 0<\theta<\pi,
  \end{equation}
  where ${\tilde N}=n+(\alpha+\beta+1)/2$, $\gamma=-\frac{2\alpha+1}{4}\pi$. The bound for the error term holds uniformly in the interval
  $[\epsilon, \pi-\epsilon]$  for fixed positive number  $\epsilon$.
Thus by (\ref{Hilb2}),
\begin{equation}\label{JacPoly1}
\begin{split}
&\frac{P_{n-\lambda}^{(\alpha+\lambda,\beta+\lambda)}(a)P_{n+1}^{(\alpha,\beta)}(a)-P_{n+1-\lambda}^{(\alpha+\lambda,\beta+\lambda)}(a)P_{n}^{(\alpha,\beta)}(a)}  {(2\pi)^{-1} \sin^{-2\alpha-\lambda-1}\big(\frac{\theta_0}{2}\big)\cos^{-2\beta-\lambda-1}\big(\frac{\theta_0}{2}\big)} \\
& =
\frac{\cos(\theta_0+\lambda\pi/2)}{\sqrt{(n-\lambda)(n+1)}}-\frac{\cos(\theta_0+\lambda\pi/2)}{\sqrt{(n+1-\lambda)n}}
+\frac{\cos(\theta_1)}{\sqrt{(n-\lambda)(n+1)}}-\frac{\cos(\theta_1)}{\sqrt{(n+1-\lambda)n}}+{\cal O}\big({n}^{-2})
\end{split}
\end{equation}
with
$$\theta_1=(2n+2+\alpha+\beta)\theta_0-\frac{(2\alpha+\lambda+1)\pi}{2}.$$
Note  that if $\lambda$ is an even integer, the first four ${\cal O}(n^{-1})$ terms in \eqref{JacPoly1} can be cancelled. This yields
\eqref{JacPoly}.
Then we  derive from \eqref{even1} and \eqref{JacPoly1} that  for even integer  $\lambda,$
\begin{equation}\label{evencase}
{\displaystyle e_f^{(\alpha,\beta)}(n,a)= {\cal O}(n^{-\lambda-1})}.
 \end{equation}

\subsection{Extension to (\ref{gafun})}
Let
\begin{equation}\label{invgpfun}
 f^*(x)=z(x)\cdot\begin{cases}
(a-x)^{\lambda},&-1\le x<a,\\[2pt]
0,&a<x\le 1,\end{cases} \quad a\in(-1,1),
\end{equation}
where  $\lambda>-1$, $z\in C^{\infty}[-1,1]$ and $f^*(a)=0$ for $\lambda>0$ and $ f^*(a)=\frac{z(a)}{2}$ for $\lambda=0$. Using an analogous argument as for (\ref{gpfun}), we can show that Theorem \ref{Thm13}  is also valid  for $f^*(x)$.
\smallskip
\begin{corollary} Given $f^*(x)$ in \eqref{invgpfun} and  $\alpha,\beta>-1$,  we  have the following pointwise error estimates.
\begin{itemize}
\item[{\rm(i)}]  For $x\in (-1,a)\cup (a,1)$, we have $e_{f^*}^{(\alpha,\beta)}(n,x)\le C(x)n^{-\lambda-1}$, where  $C(x)$ is independent of $n$ and has the behaviours near $x=a,\pm 1$ as follows
\begin{equation}\label{PerrJac11}
C(-1+ \xi)\le D(-1)\xi^{-\max\{\frac \beta2+\frac 14,0\}},\quad C(1- \xi)\le D(1)\xi^{-\max\{\frac \alpha2+\frac14,0\}},\quad  C(a\pm \xi)\le D(a)\xi^{-1},
\end{equation}
for $0<\xi\le \delta,$ where $D(\pm 1), D(a)>0$ and $\delta>0$ are independent of $n.$

\item[{\rm (ii)}]  At  $x=\pm 1$,  we have
\begin{equation}\label{PerrJac21}
 e_{f^*}^{(\alpha,\beta)}(n,1)\le C n^{-\lambda+\alpha-\frac{1}{2}};\quad e_{f^*}^{(\alpha,\beta)}(n,-1)\le C n^{-\lambda+\beta-\frac{1}{2}},
\end{equation}
where  $C$ is a positive constant independent of $n$.

\item[{\rm (iii)}] At $x=a$ and $ \lambda>0$,  we have
\begin{equation}\label{PerrJac31}
e_{f^*}^{(\alpha,\beta)}(n, a)\le\left\{\begin{aligned}
& Cn^{-\lambda-1},\quad &&\lambda\ \mathrm{is\ even,}\\
& Cn^{-\lambda},\quad &&\mathrm{otherwise,}
\end{aligned}\right.
\end{equation}
where  $C$ is a positive constant independent of $n$.
\end{itemize}
\end{corollary}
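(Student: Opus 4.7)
The plan is to avoid repeating the full pointwise analysis from Sections 2 and 3 by exploiting the reflection symmetry $x \mapsto -x$ which identifies $f^{*}$ with a function of type \eqref{gpfun}. Specifically, I would set $\tilde f(x):=f^{*}(-x)$ and observe that
\[
\tilde f(x)=\tilde z(x)\cdot\begin{dcases} 0, & -1\le x<-a,\\ (x-(-a))^{\lambda}, & -a<x\le 1,\end{dcases}
\]
with $\tilde z(x):=z(-x)\in C^{\infty}[-1,1]$ and $\tilde z(-a)=z(a)\neq 0$. Thus $\tilde f$ is a generalised $\Phi$-function in the sense of \eqref{gpfun} with singularity located at $\tilde a=-a\in(-1,1)$, so Theorem \ref{Thm13} applies to $\tilde f$ directly.

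Next, I would derive the transformation law relating the Jacobi truncation of $f^{*}$ to that of $\tilde f$. Using the well-known identities $P_{k}^{(\alpha,\beta)}(-x)=(-1)^{k}P_{k}^{(\beta,\alpha)}(x)$ and $\omega^{(\alpha,\beta)}(-x)=\omega^{(\beta,\alpha)}(x)$, together with the symmetry $\sigma_{k}^{(\alpha,\beta)}=\sigma_{k}^{(\beta,\alpha)}$ evident from \eqref{eq:jacexpcoeffs0}, the change of variables $x\mapsto -x$ in the coefficient integral yields $a_{k}^{(\alpha,\beta)}(f^{*})=(-1)^{k}a_{k}^{(\beta,\alpha)}(\tilde f)$. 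Substituting this into \eqref{sepAfun} gives
\[
S_{n}^{(\alpha,\beta)}[f^{*}](x)=\sum_{k=0}^{n}(-1)^{k}a_{k}^{(\beta,\alpha)}(\tilde f)P_{k}^{(\alpha,\beta)}(x)=\sum_{k=0}^{n}a_{k}^{(\beta,\alpha)}(\tilde f)P_{k}^{(\beta,\alpha)}(-x)=S_{n}^{(\beta,\alpha)}[\tilde f](-x),
\]
so that $e_{f^{*}}^{(\alpha,\beta)}(n,x)=e_{\tilde f}^{(\beta,\alpha)}(n,-x)$ for all $x\in[-1,1]$.

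With this reduction in hand, the three items of the corollary follow by applying the corresponding items of Theorem \ref{Thm13} to $\tilde f$, with parameters $(\alpha,\beta)$ swapped to $(\beta,\alpha)$ and the singularity at $-a$. For part (i), a point $x=a\pm\xi$ is mapped to $-x=-a\mp\xi$, so the singularity bound $C(a\pm\xi)\le D(a)\xi^{-1}$ in \eqref{PerrJac11} is the image of the corresponding bound $C(-a\mp\xi)\le D(-a)\xi^{-1}$ from \eqref{PerrJac1}; likewise $x=1-\xi\mapsto -1+\xi$ converts the $(\beta,\alpha)$-endpoint bound $\xi^{-\max\{\beta/2+1/4,0\}}\!\!|_{\rm swapped}=\xi^{-\max\{\alpha/2+1/4,0\}}$ into the claimed estimate at $x=1-\xi$, and symmetrically for $x=-1+\xi$. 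For part (ii), the inequalities $e_{\tilde f}^{(\beta,\alpha)}(n,\pm 1)\le Cn^{-\lambda-1/2+\beta}$ and $Cn^{-\lambda-1/2+\alpha}$ from \eqref{PerrJac2} translate under $x\mapsto -x$ precisely to \eqref{PerrJac21}. For part (iii), $e_{\tilde f}^{(\beta,\alpha)}(n,-a)$ coincides with $e_{f^{*}}^{(\alpha,\beta)}(n,a)$ and the dichotomy in \eqref{PerrJac31} is inherited from \eqref{PerrJac3}.

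The proof is therefore essentially formal once the change-of-variable identity for the truncations is established. There is no real obstacle; the only point requiring mild care is correctly tracking that the two Jacobi parameters $(\alpha,\beta)$ and the singularity location both undergo reflection, so that the asymmetric endpoint exponents $\max\{\alpha/2+1/4,0\}$ and $\max\{\beta/2+1/4,0\}$ in \eqref{PerrJac11} end up attached to the endpoints $x=1$ and $x=-1$, respectively, exactly as in Theorem \ref{Thm13}. Because the proof is a clean corollary of the theorem via a single symmetry, no reworking of Lemmas \ref{vdc}--\ref{JEC} or of Theorem \ref{Thm32} is required.
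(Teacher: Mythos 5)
Your proposal is correct and is essentially the paper's intended argument made explicit: the paper dismisses this corollary with ``an analogous argument as for \eqref{gpfun},'' and elsewhere (e.g.\ in the proofs of Lemma \ref{JEC} and Theorem \ref{ThmB}) it invokes exactly the reflection identity $P_n^{(\alpha,\beta)}(-x)=(-1)^nP_n^{(\beta,\alpha)}(x)$ that you use to reduce $f^*$ to a function of type \eqref{gpfun} with singularity at $-a$ and parameters swapped. Your identity $e_{f^*}^{(\alpha,\beta)}(n,x)=e_{\tilde f}^{(\beta,\alpha)}(n,-x)$ is verified correctly and the bookkeeping of the swapped exponents at $x=\pm 1$ matches \eqref{PerrJac11}--\eqref{PerrJac31}.
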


\smallskip

Notice that
$z(x)|x-a|^{\lambda}=f(x)+f^*(x)$, which implies Theorem \ref{Thm13} also holds for the function defined by (\ref{gafun}).

\medskip
\begin{figure}[hpbt]
\centering
\includegraphics[height=7cm,width=14cm]{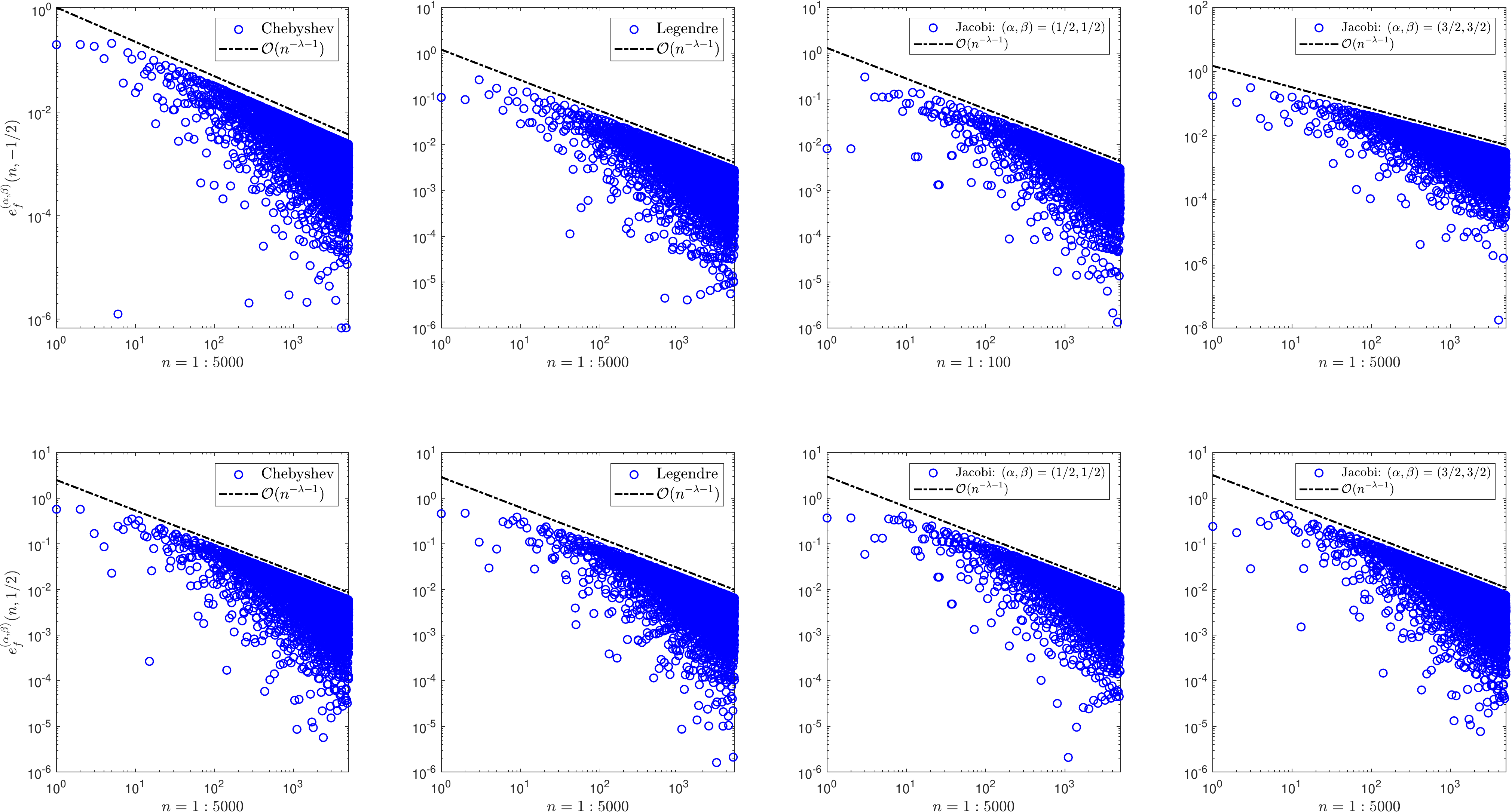}
\caption{ Pointwise errors  $e_f^{(\alpha,\beta)}(n,x)$ for $f(x)=(x-\frac{1}{4})_+^{\lambda}$ with $\lambda=-\frac{1}{3}$ at $x=-\frac{1}{2}$ {\rm(}first row{\rm)} and $x=\frac{1}{2}$ {\rm(}second row{\rm)},  respectively, for $n=1:5000$. In the dashed dotted lines, the constants in $\mathcal O$ may have different values for different cases, and likewise for the figures hereinafter.}\label{figure14}
\end{figure}

\begin{figure}[hpbt]
\centering
\includegraphics[height=7cm,width=14cm]{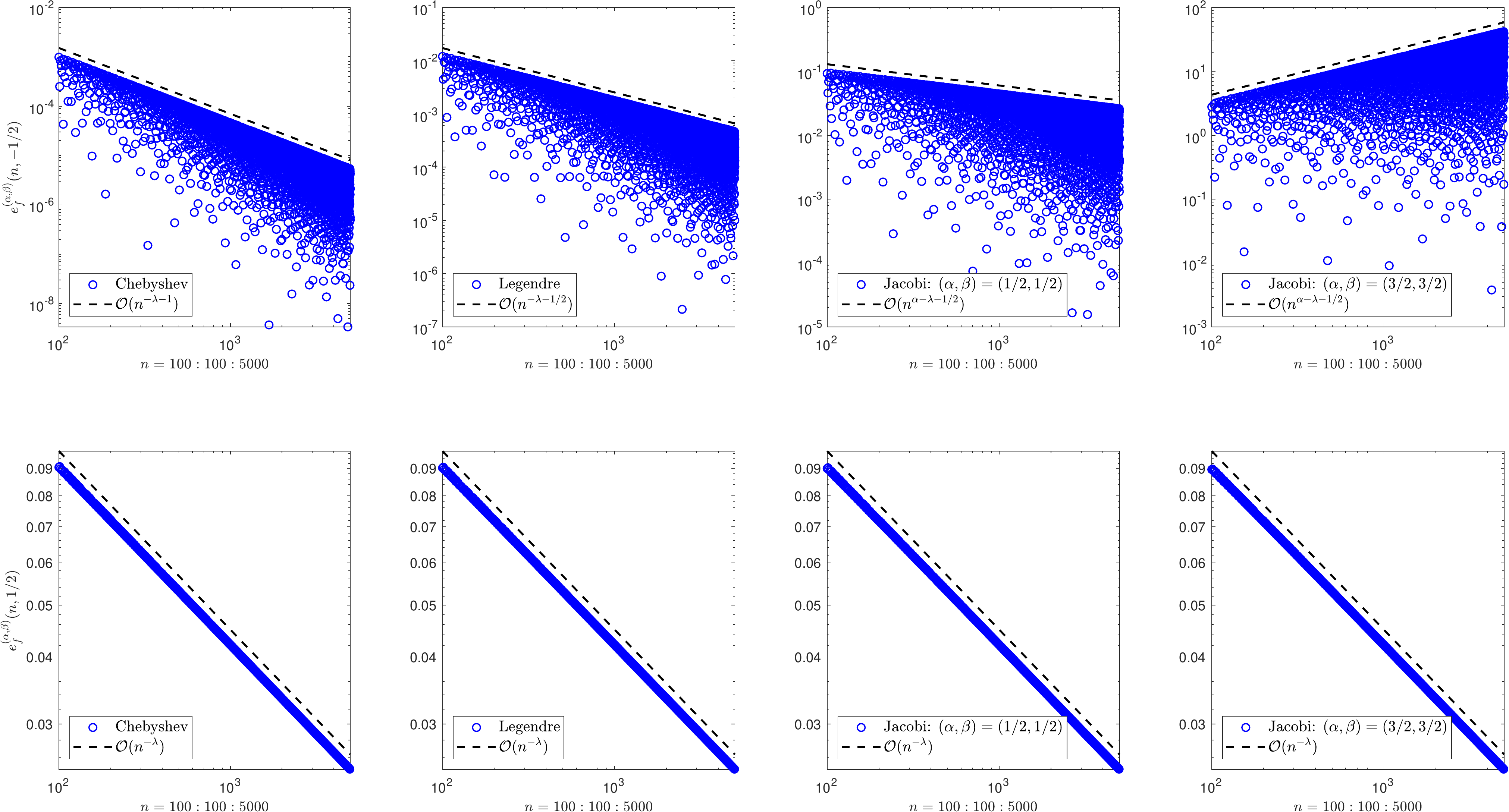}
\caption{ Pointwise errors $e_f^{(\alpha,\beta)}(n,x)$ for $f(x)=(x-\frac{1}{4})_+^{\lambda}$ with $\lambda=\frac{1}{3}$ at endpoint $x=1$ {\rm(}first row{\rm)} and singular point $x=\frac{1}{4}$ {\rm(}second row{\rm)},  respectively, for $n=100:100:5000$.}\label{figure15}
\end{figure}

To check the error bounds (\ref{PerrJac100})-(\ref{PerrJac3})  numerically, we illustrate the convergence orders of the pointwise errors $e_{f}^{(\alpha,\beta)}(n,x)$ for functions $f(x)=(x-\frac{1}{4})_+^{\lambda}$ or $f(x)=|x-\frac{1}{4}|^{\lambda}$ with different values of $\lambda$ and $\alpha, \beta$. From Figs. \ref{figure14}-\ref{figure15}, we observe that these convergence orders are attainable and in accordance with the estimates stated in Theorem \ref{Thm13}, even for the divergent cases (see the last column of the first row in Fig. \ref{figure15} and Fig. \ref{figure16}).

Note that the function $C(x)$ near $x=\pm 1$ and $x=a$ is  described  in (\ref{PerrJac1}). We further demonstrate that
the estimate agrees well with the pointwise errors through a test on $f(x)=(x-a)_{+}^{\lambda}$. Pointwise errors around the point set $\left\{-1,a,1\right\}$ are plotted in Fig. \ref{figure44}, which implies the optimality on the estimates (\ref{PerrJac1}) in the sense that the orders on $\xi$ can not be improved.

\begin{figure}[!t]
\centering
\includegraphics[height=4cm,width=4.5cm]{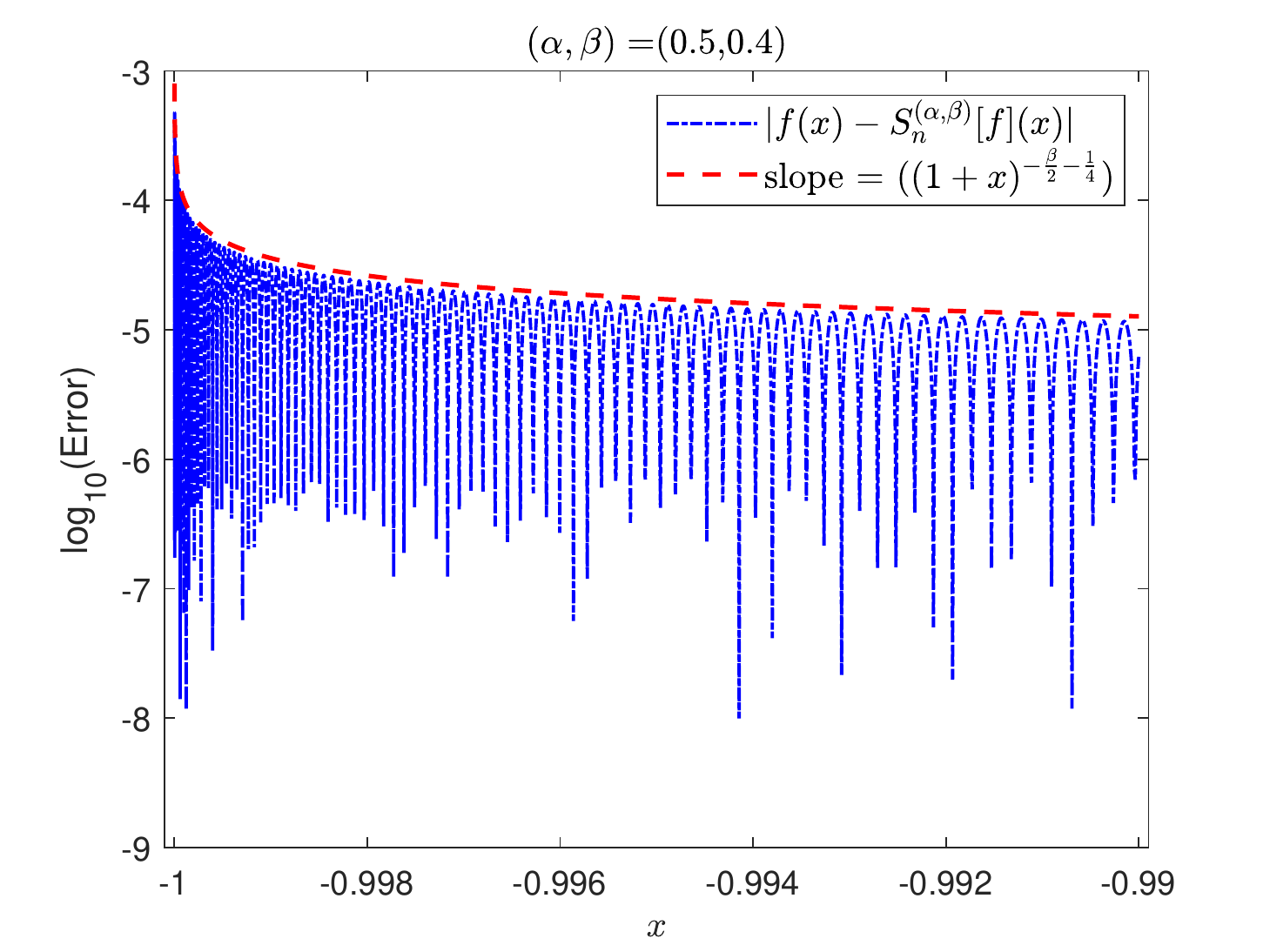}
\includegraphics[height=4cm,width=4.5cm]{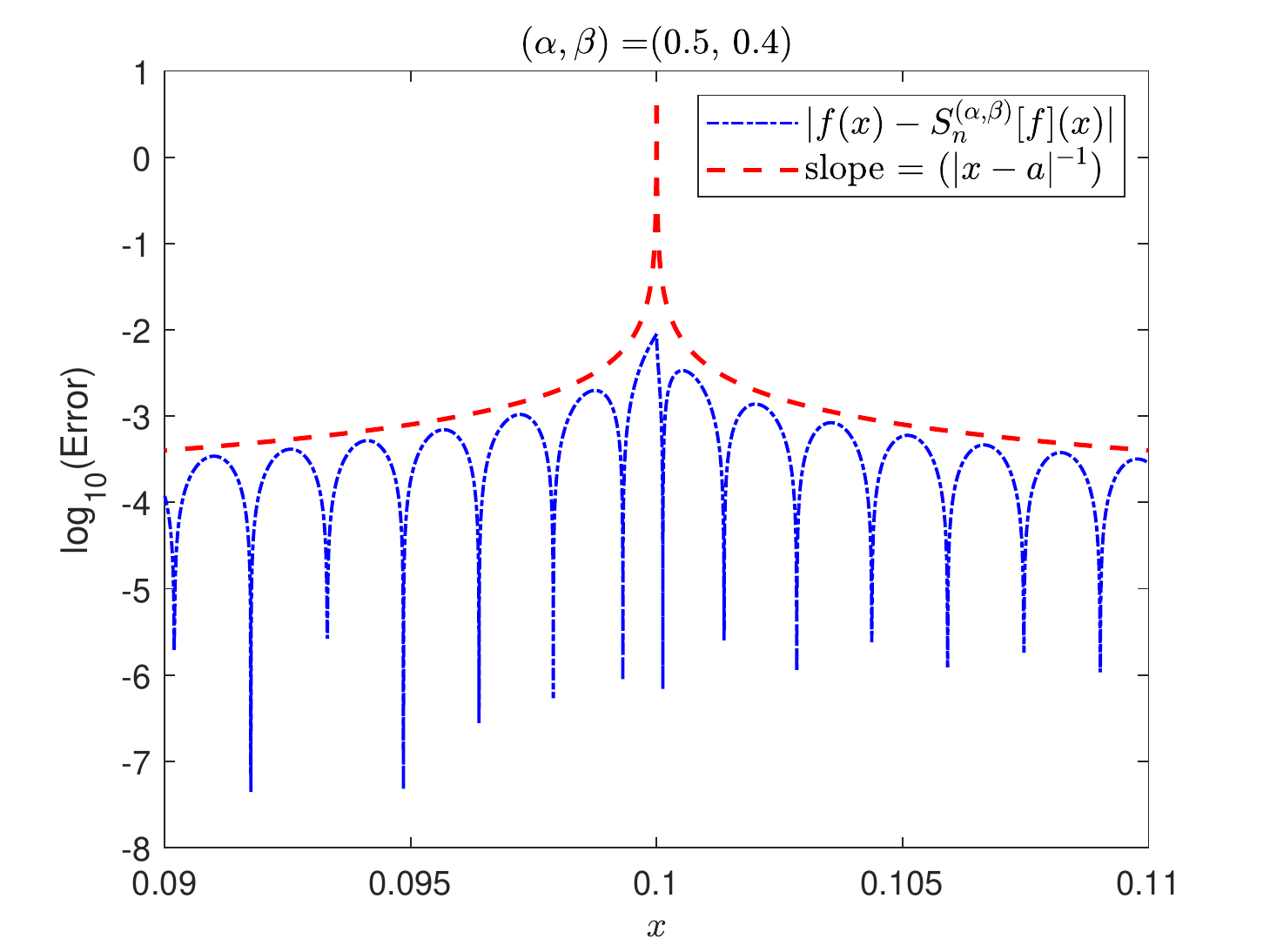}
\includegraphics[height=4cm,width=4.5cm]{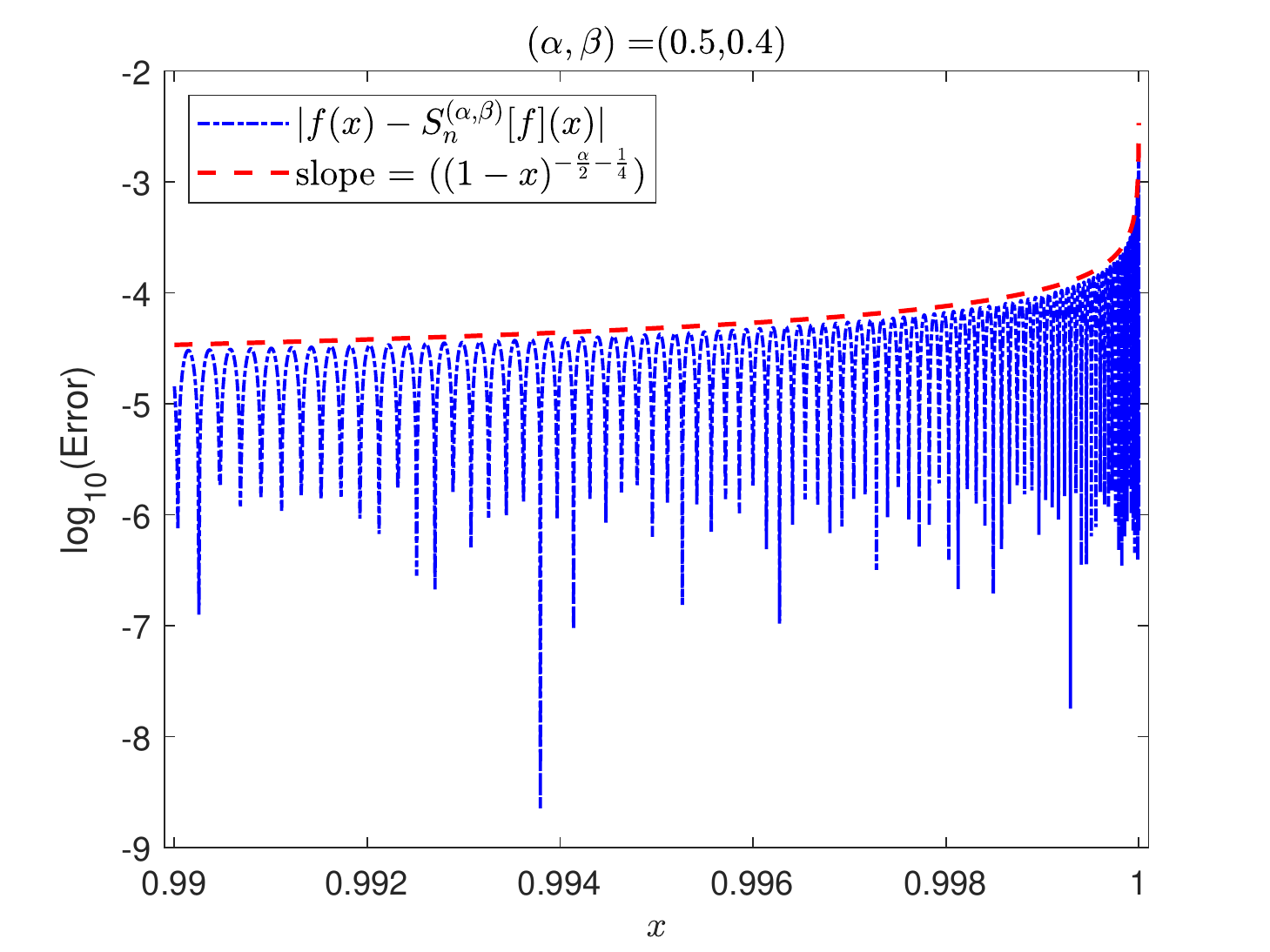}
\vspace*{-5pt}
\caption{ Pointwise error plots of $e_{f}^{(\alpha,\beta)}(n,x)$ around $x=-1$ (left), $x=a$ (middle) and $x=1$ (right), where $f(x)=\left(x-\frac{1}{10}\right)_{+}^{1/2}$, $\alpha=\frac{1}{2}$, $\beta=\frac{2}{5}$ and $n=2000$.}\label{figure44}
\end{figure}

\section{Convergence rates in the maximum norm}\label{sec4}
 From the proof of Theorem \ref{Thm32} and  Theorem \ref{Thm13}, for  function (\ref{gpfun}) or (\ref{gafun}) with $\lambda>0$,
 we have the following asymptotic convergence rates.
\smallskip
\begin{corollary}\label{Thm14}
Suppose that $\lambda>0$ and $f(x)$ is a function defined in \eqref{gpfun} {\rm(}or \eqref{gafun} where $\lambda$ is not an even number{\rm)}, then for  $\alpha,\beta>-1$, \eqref{maxer1} holds, that is
\begin{equation*}
  \|f-S_{n}^{(\alpha,\beta)}[f]\|_{\infty} =\begin{dcases}
  {\cal O}\big(n^{\max\{\alpha-\frac{1}{2},\beta-\frac{1}{2}\}-\lambda}\big),&  {\rm if}\; \max\{\alpha,\beta\}>\frac{1}{2}, \\[4pt]
  {\cal O}\left(n^{-\lambda}\right),& {\rm if}\; \max\{\alpha,\beta\}\le\frac{1}{2}. \end{dcases}
\end{equation*}
\end{corollary}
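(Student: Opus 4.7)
The plan is to combine the pointwise representation \eqref{Jtruerr} with the two complementary bounds on $a_n^{(\alpha,\beta)}(x;g)$ given in Theorem \ref{Thm32} and a sharp uniform bound on $|P_n^{(\alpha,\beta)}|$ derived from Theorem \ref{Thm23}. Fix $\epsilon\in(0,\min\{1-a,1+a\}/4)$ and split $[-1,1]=I_1\cup I_2$ with $I_1=\{x:|x-a|\ge\epsilon\}$ and $I_2=\{x:|x-a|<\epsilon\}$, so that $I_2$ is bounded away from $\pm 1$.

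On $I_2$, the middle branches of \eqref{Jasy2} immediately give $|P_n^{(\alpha,\beta)}(x)|,\,|P_{n+1}^{(\alpha,\beta)}(x)|\le C_\epsilon n^{-1/2}$, while the uniform estimate \eqref{coe11} supplies $|a_n^{(\alpha,\beta)}(x;g)|\le Cn^{-\lambda+1/2}$. Substituting both into \eqref{Jtruerr} and using that $A_n^{(\alpha,\beta)},\,B_n^{(\alpha,\beta)}={\cal O}(1)$ yields $e_f^{(\alpha,\beta)}(n,x)\le Cn^{-\lambda}$ throughout $I_2$. This is exactly the regime where the sharper pointwise bound \eqref{coe1} degenerates because of the $|x-a|^{-1}$ factor and the uniform bound \eqref{coe11} is indispensable.

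On $I_1$, the factor $|x-a|^{-1}$ in \eqref{coe1} is controlled by $\epsilon^{-1}$, so $|a_n^{(\alpha,\beta)}(x;g)|\le C(\epsilon)n^{-\lambda-1/2}$ uniformly. For the Jacobi factor, I would read off $\|P_n^{(\alpha,\beta)}\|_{L^\infty(0,1)}$ directly from Theorem \ref{Thm23}: in the boundary layer $1-(n+1)^{-2}\le x\le 1$ the bound is ${\cal O}(n^\alpha)$, whereas on $0\le x\le 1-(n+1)^{-2}$ the factor $(1-x)^{-\alpha/2-1/4}$ in \eqref{Jasy2} contributes ${\cal O}(n^{\alpha+1/2})$ when $\alpha>-1/2$ (maximized at $x=1-(n+1)^{-2}$) and a constant when $\alpha\le-1/2$. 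Hence $\sup_{[0,1]}|P_n^{(\alpha,\beta)}|\le Cn^{\max\{\alpha,-1/2\}}$, and symmetrically $\sup_{[-1,0]}|P_n^{(\alpha,\beta)}|\le Cn^{\max\{\beta,-1/2\}}$. Inserting into \eqref{Jtruerr} gives $e_f^{(\alpha,\beta)}(n,x)\le Cn^{\max\{\alpha,\beta,-1/2\}-\lambda-1/2}$ uniformly on $I_1$. Note that it is essential to use Theorem \ref{Thm23} here rather than Corollary \ref{Bineua}, whose weighted bound is not uniform as $x\to\pm1$.

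Taking the larger of the two bounds completes the proof. If $\max\{\alpha,\beta\}>1/2$, then $\max\{\alpha,\beta,-1/2\}-1/2>0$ and the $I_1$-bound $n^{\max\{\alpha,\beta\}-1/2-\lambda}$ dominates $n^{-\lambda}$, matching the first case of \eqref{maxer1}; if $\max\{\alpha,\beta\}\le1/2$, then $\max\{\alpha,\beta,-1/2\}-1/2\le 0$, so both $I_1$ and $I_2$ contribute at most ${\cal O}(n^{-\lambda})$, matching the second case. For $f$ of the form \eqref{gafun} with non-even $\lambda$, the decomposition $|x-a|^\lambda z(x)=(x-a)_+^\lambda z(x)+(a-x)_+^\lambda z(x)$ reduces the bound to the two one-sided cases already treated. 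The main technical obstacle is the matching of \eqref{coe1} and \eqref{coe11}: near the singularity one must switch to the weaker uniform bound on $a_n^{(\alpha,\beta)}(x;g)$, and the improved $n^{-1/2}$-decay of $P_n^{(\alpha,\beta)}$ in the interior (rather than the worst-case $n^{\max\{\alpha,\beta,-1/2\}}$ achieved near $\pm1$) provides precisely the compensation needed for the interior estimate to match the boundary estimate.
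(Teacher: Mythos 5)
Your proposal is correct and follows essentially the same route as the paper: the same splitting into a neighbourhood of $x=a$ (where the uniform bound \eqref{coe11} and the interior decay $P_n^{(\alpha,\beta)}(x)={\cal O}(n^{-1/2})$ give ${\cal O}(n^{-\lambda})$) and its complement (where \eqref{coe1} with $|x-a|^{-1}$ bounded and $\|P_n^{(\alpha,\beta)}\|_\infty={\cal O}(n^{\max\{\alpha,\beta,-1/2\}})$ from Theorem \ref{Thm23} give the boundary-dominated rate), followed by the decomposition $|x-a|^\lambda z(x)=f(x)+f^*(x)$ for \eqref{gafun}. Your explicit derivation of $\sup|P_n^{(\alpha,\beta)}|$ from the four branches of \eqref{Jasy2} is a slightly more detailed version of what the paper leaves implicit.
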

\begin{proof}
We assume here that $f(x)$ is defined in (\ref{gpfun}), while for functions in (\ref{gafun}) a similar proof can be done by the fact $z(x)|x-a|^{\lambda}=f(x)+f^{*}(x)$.

If $x$ belongs to a closed subset of $[-1,a)\cup(a,1]$, that is $x\in [-1,a-\delta_0]\cup[a+\delta_0,1]$ for any fixed $\delta_{0}>0$ such that $a-\delta_{0}>-1$ and $a+\delta_{0}<1$, then $a_{n}^{(\alpha,\beta)}(x;g)$ will be uniformly bounded by $a_{n}^{(\alpha,\beta)}(x;g)=\mathcal{O}(n^{-\lambda-1/2})$. This further leads to
\begin{equation}\label{maxer1case1}
  e_{f}^{(\alpha,\beta)}(n,x)=\mathcal{O}(n^{-\lambda-\frac{1}{2}})\max\left\{\|P_n^{(\alpha,\beta)}\|_{\infty},\|P_{n+1}^{(\alpha,\beta)}\|_{\infty}\right\}
  =\mathcal{O}(n^{\max\left\{\alpha-\frac{1}{2},\beta-\frac{1}{2},-1\right\}-\lambda}).
\end{equation}

While if $x\in[a-\delta_{0},a+\delta_{0}]$, from the uniform estimate (\ref{coe11}) that $a_{n}^{(\alpha,\beta)}(x;g)=\mathcal{O}(n^{-\lambda+1/2})$ and $P_{n}^{(\alpha,\beta)}(x)=\mathcal{O}({n^{-1/2}})$ (see from Theorem \ref{Thm23}), we obtain
\begin{equation}\label{maxer1case2}
  e_{f}^{(\alpha,\beta)}(n,x)=\mathcal{O}(n^{-\lambda+\frac{1}{2}})\big(|P_{n}^{(\alpha,\beta)}(x)|+|P_{n+1}^{(\alpha,\beta)}(x)|\big)
  =\mathcal{O}(n^{-\lambda}),
\end{equation}
which together with (\ref{maxer1case1}) lead to (\ref{maxer1}).
\end{proof}

In Fig. \ref{figure16}, we demonstrate the convergence rates of the maximum error of $S_{n}^{(\alpha,\beta)}[f]$ and $p_{n}^{*}$ for $f(x)=|x-1/4|^{\lambda}$. Obviously, all these numerical results are consistent with the theoretical estimate (\ref{maxer1}).

\begin{figure}[t]
\centering
\includegraphics[width=.341\textwidth]{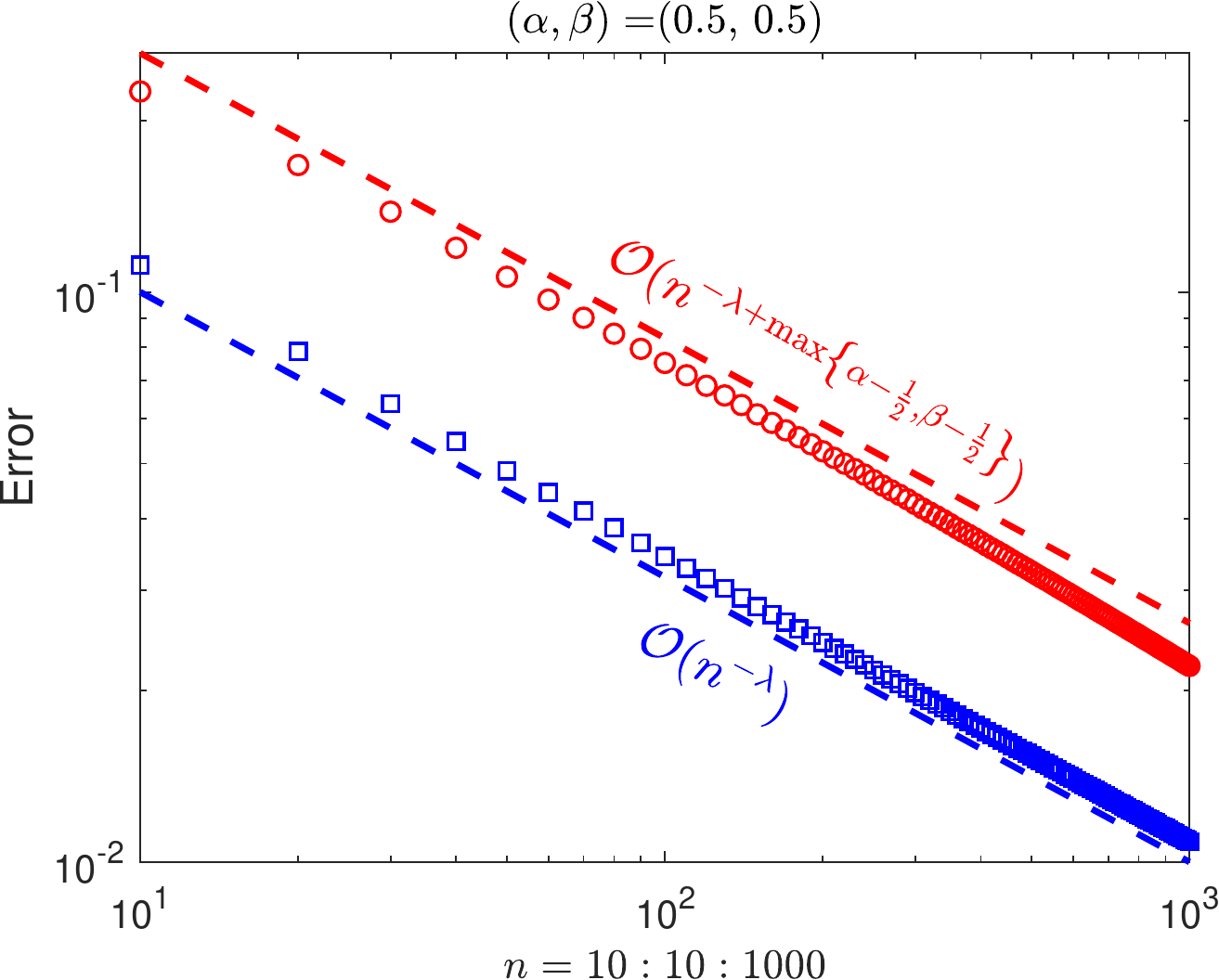}\hspace{-3pt}
\includegraphics[width=.325\textwidth]{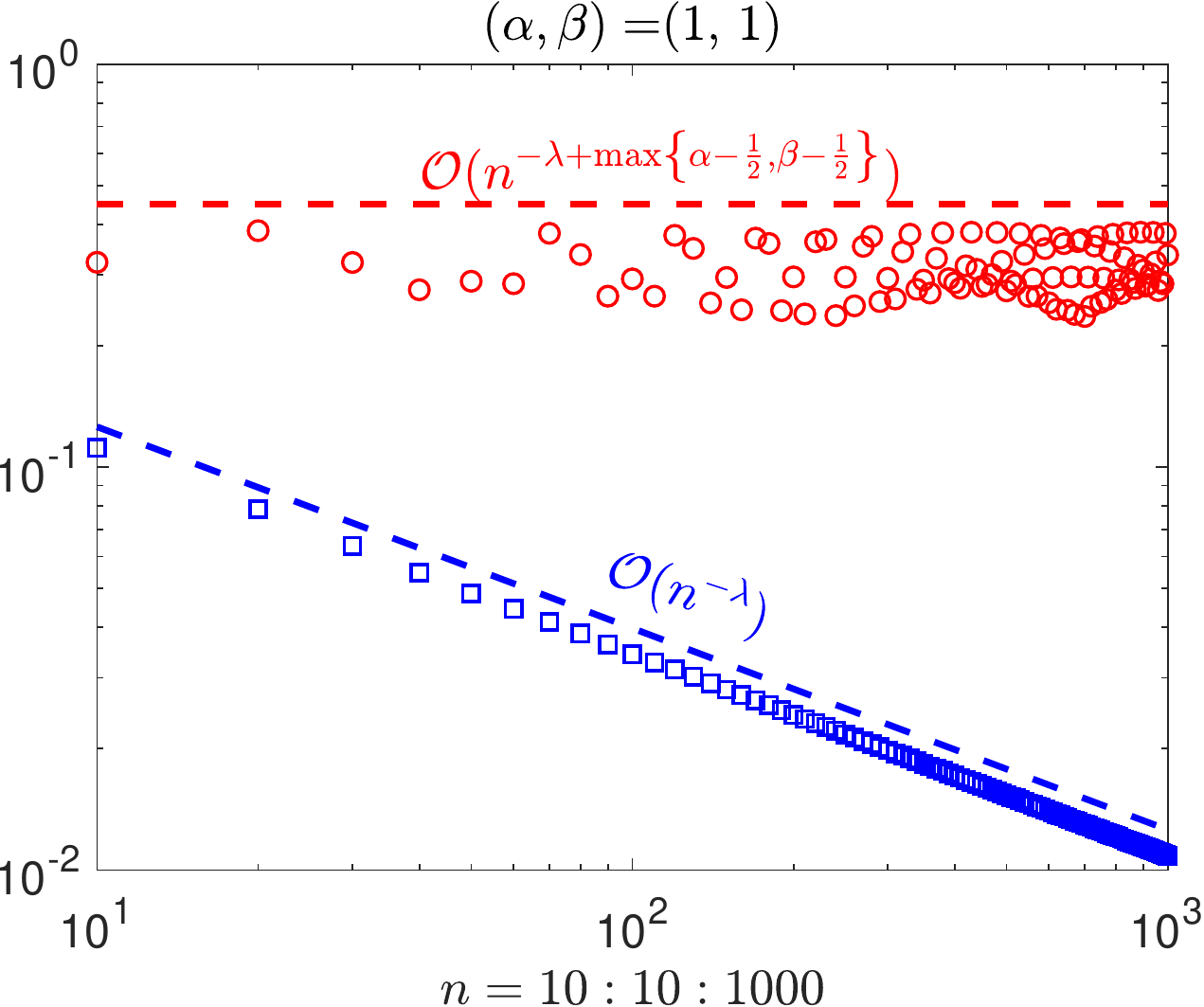}\hspace{-3pt}
\includegraphics[width=.325\textwidth]{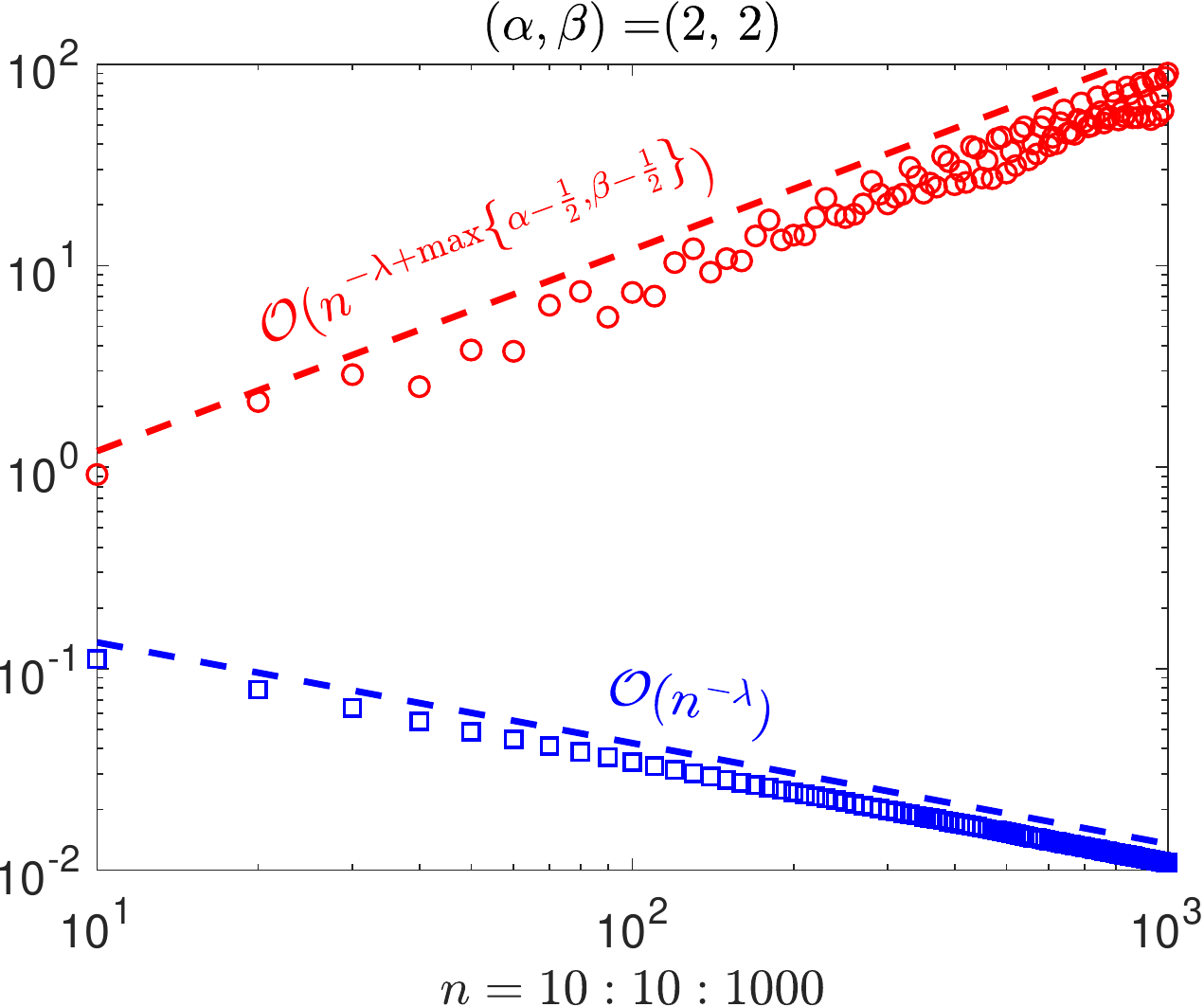} \\ [10pt]
\includegraphics[width=.341\textwidth]{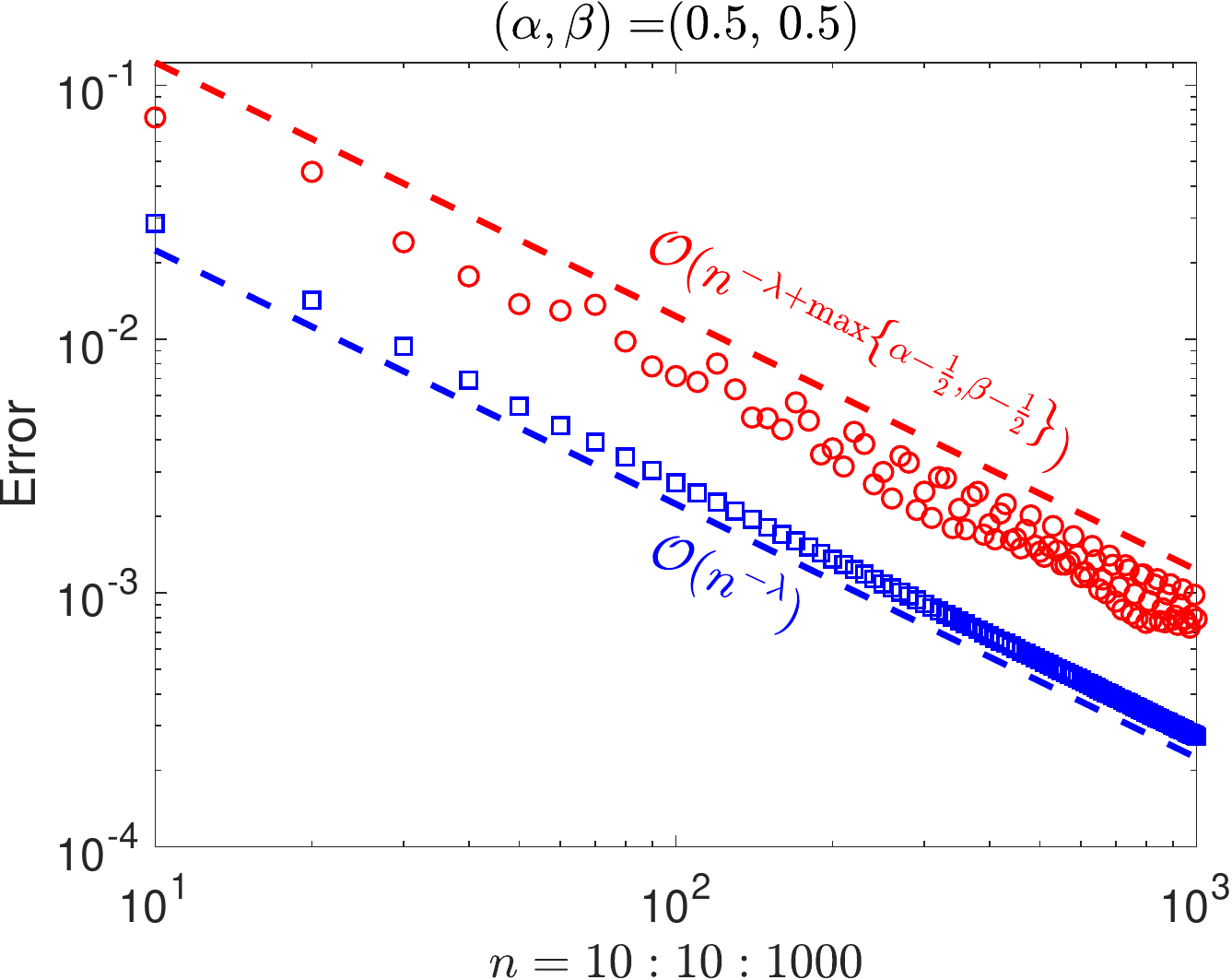}\hspace{-3pt}
\includegraphics[width=.325\textwidth]{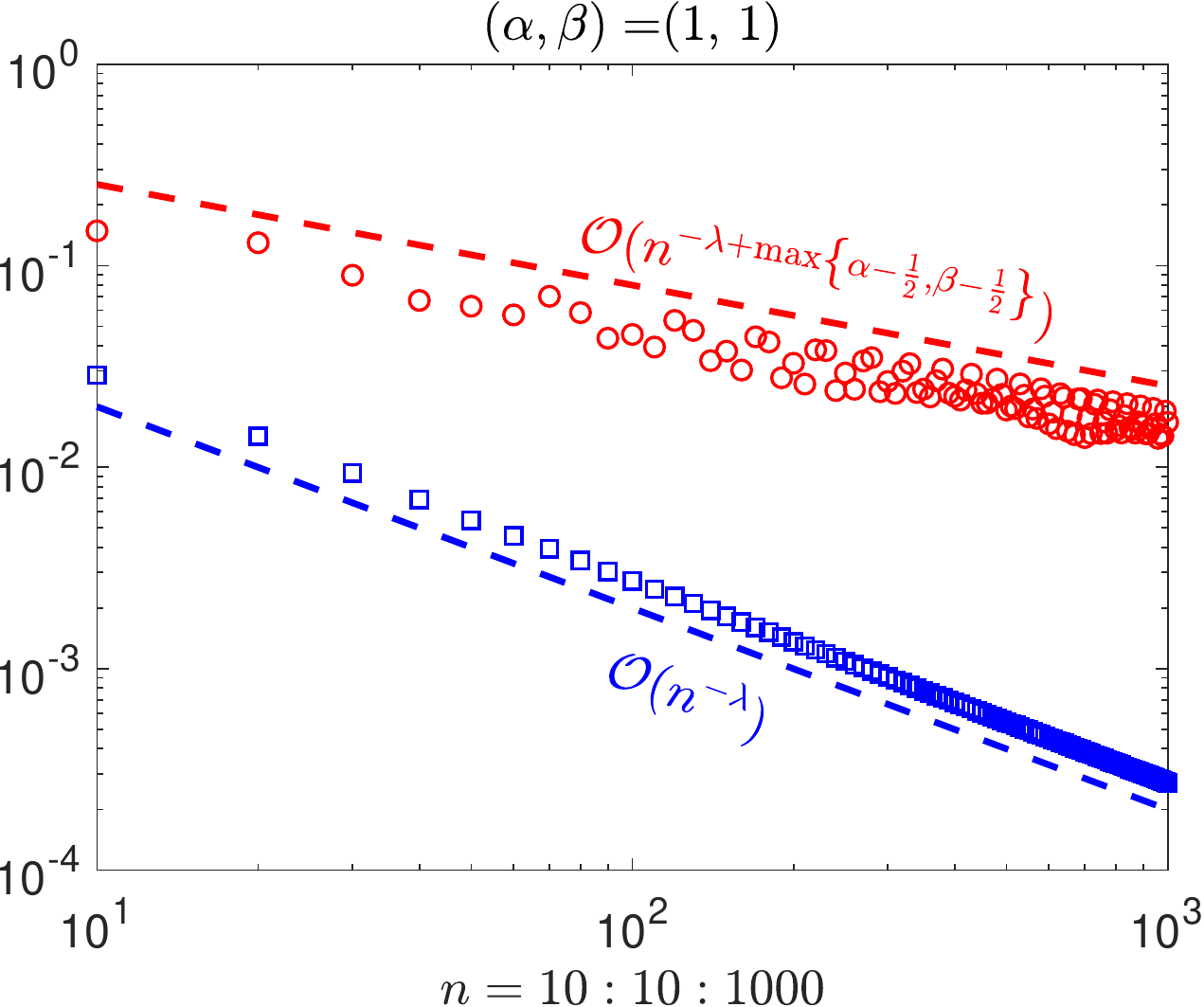}\hspace{-3pt}
\includegraphics[width=.325\textwidth]{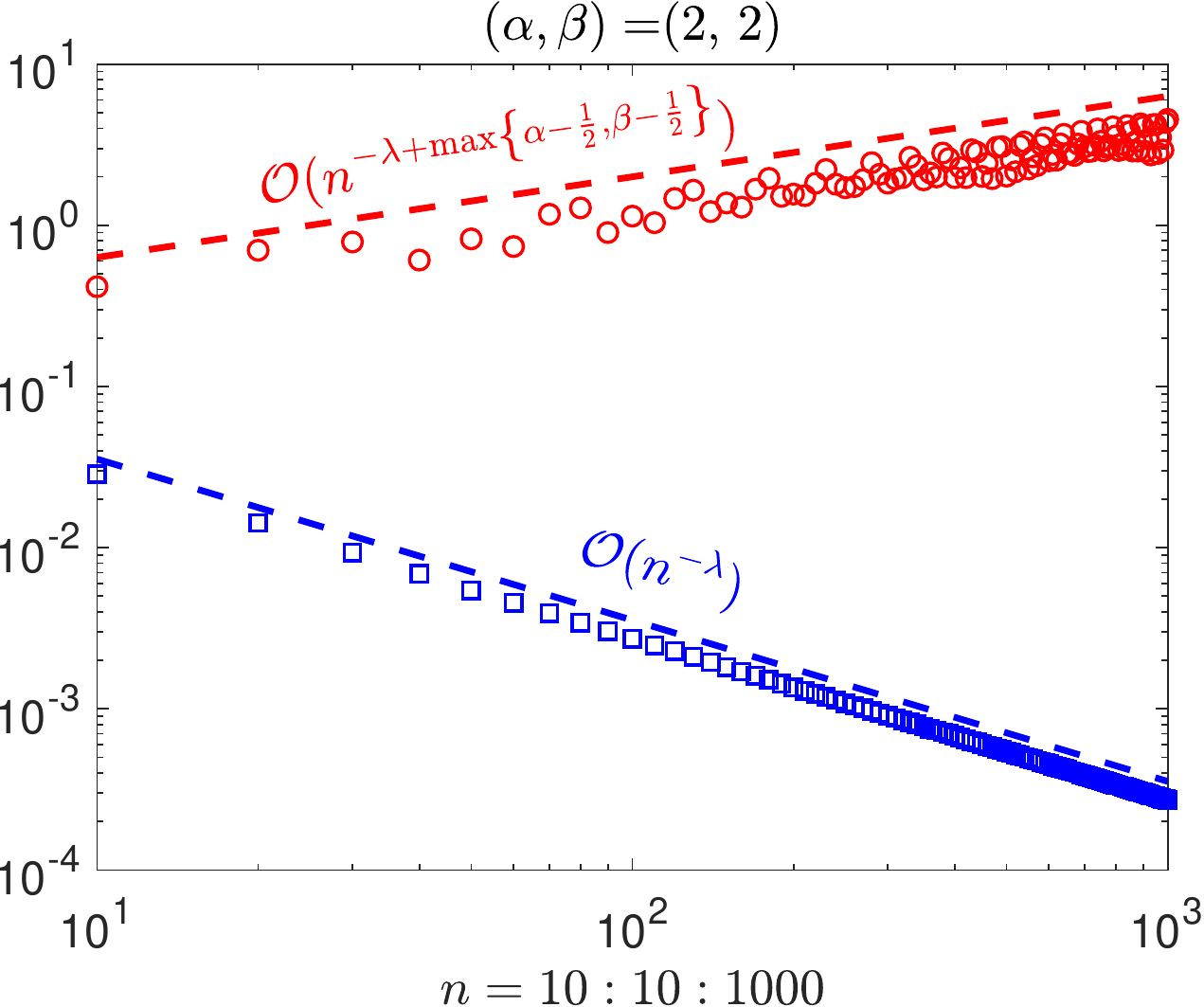}
\caption{ Maximum error $\|f-S_n^{(\alpha,\beta)}[f]\|_{\infty}$ {\rm(}red circle{\rm)} and $\|f-p_n^{*}\|_{\infty}$ {\rm(}blue square{\rm)} for $f(x)=|x-\frac{1}{4}|^{\lambda}$ with $\lambda=1/2$ {\rm(}first row{\rm)} and  $\lambda=1$ {\rm(}second row{\rm)}, respectively, for $n=10:10:10^3$.}\label{figure16}
\end{figure}

\medskip
Accordingly, it is interesting to examine the  weighted pointwise error defined in (\ref{weightedpointerror}), which is one order higher than the optimal polynomial approximation as stated below.

\begin{corollary}\label{Thm15}
Suppose that $\lambda>0$ and $f(x)$ is a function defined in \eqref{gpfun} {\rm(}or \eqref{gafun} where $\lambda$ is not an even number{\rm)}, then for  $\alpha,\beta>-1$, \eqref{weightedpointerrormax} holds, i.e., $\|\hat{e}_{f}^{(\alpha,\beta)}\|_{\infty}= {\cal O}(n^{-\lambda-1})$.
\end{corollary}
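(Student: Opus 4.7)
The plan is to combine the pointwise factorization \eqref{Jtruerr} with the uniform-in-$x$ asymptotic bound \eqref{coe1} for $a_n^{(\alpha,\beta)}(x;g)$ from Theorem~\ref{Thm32} and the pointwise upper bound \eqref{Jasy} for $|P_n^{(\alpha,\beta)}(x)|$ from Corollary~\ref{Bineua}. The weight factor $(1-x)^{\max\{\alpha/2+1/4,0\}}(1+x)^{\max\{\beta/2+1/4,0\}}(x-a)$ appearing in the definition \eqref{weightedpointerror} of $\hat e_f^{(\alpha,\beta)}(n,x)$ is, by design, exactly the product of the three singular factors that arise in those estimates, so the argument reduces to multiplying the bounds and observing that each singularity is cancelled.

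First, since $A_n^{(\alpha,\beta)}, B_n^{(\alpha,\beta)}=\mathcal{O}(1)$ by \eqref{haveA}, the formula \eqref{Jtruerr} yields, for every $x\in[-1,a)\cup(a,1]$,
\[
e_f^{(\alpha,\beta)}(n,x) \le C\bigl(|a_n^{(\alpha,\beta)}(x;g)|\,|P_{n+1}^{(\alpha,\beta)}(x)|+|a_{n+1}^{(\alpha,\beta)}(x;g)|\,|P_n^{(\alpha,\beta)}(x)|\bigr).
\]
Inserting the first case of \eqref{coe1} (the $x$-uniform bound $|a_n^{(\alpha,\beta)}(x;g)|\le C|x-a|^{-1}n^{-\lambda-1/2}$) together with \eqref{Jasy} applied to both $P_n^{(\alpha,\beta)}(x)$ and $P_{n+1}^{(\alpha,\beta)}(x)$, one arrives at
\[
e_f^{(\alpha,\beta)}(n,x) \le C\,n^{-\lambda-1}\,|x-a|^{-1}(1-x)^{-\max\{\frac{\alpha}{2}+\frac14,0\}}(1+x)^{-\max\{\frac{\beta}{2}+\frac14,0\}},
\]
uniformly on $[-1,a)\cup(a,1]$.

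Multiplying through by the weight $(1-x)^{\max\{\alpha/2+1/4,0\}}(1+x)^{\max\{\beta/2+1/4,0\}}|x-a|$ from \eqref{weightedpointerror} cancels each of the three singular factors above and yields $|\hat e_f^{(\alpha,\beta)}(n,x)|\le C n^{-\lambda-1}$ uniformly on $[-1,a)\cup(a,1]$. Because $\lambda>0$, the function $f$ in \eqref{gpfun} (or \eqref{gafun}) is continuous on $[-1,1]$, hence so is $\hat e_f^{(\alpha,\beta)}(n,\cdot)$, and the supremum over $[-1,a)\cup(a,1]$ coincides with the supremum over the whole interval $[-1,1]$. This is precisely \eqref{weightedpointerrormax}.

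The real work has already been done upstream: everything hinges on the fact that the constant hidden in \eqref{coe1} is independent of $x$, with the entire $x$-dependence neatly isolated into the factor $|x-a|^{-1}$. That uniformity was the payoff of the Rodrigues-formula reduction used in Theorem~\ref{Thm32}, and it is exactly what lets the weight absorb simultaneously the singularity at $x=a$ coming from $a_n^{(\alpha,\beta)}(x;g)$ and the endpoint singularities at $x=\pm1$ coming from $P_n^{(\alpha,\beta)}(x)$. Once this uniformity is granted, the corollary reduces to the one-line computation above, with no further analysis required.
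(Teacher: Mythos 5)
Your proposal is correct and follows essentially the same route as the paper: combine the $x$-uniform bound $a_n^{(\alpha,\beta)}(x;g)=|x-a|^{-1}\mathcal{O}(n^{-\lambda-1/2})$ from \eqref{coe1} with the weighted pointwise bound \eqref{Jasy} on $P_n^{(\alpha,\beta)}$, apply \eqref{Jtruerr}, and observe that the weight in \eqref{weightedpointerror} cancels all three singular factors. The only cosmetic difference is at $x=a$, where the paper simply notes $\hat e_f^{(\alpha,\beta)}(n,a)=0$ because of the factor $(x-a)$, whereas you argue by continuity; both are fine.
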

\begin{proof}
From \eqref{Jasy}, we have

\begin{equation}\label{Jacweiuniform}
  (1-x)^{\max\{\frac{\alpha}{2}+\frac{1}{4},0\}}(1+x)^{\max\{\frac{\beta}{2}+\frac{1}{4},0\}}P_n^{(\alpha,\beta)}(x)= {\cal O}(n^{-\frac{1}{2}}),
\end{equation}
which, together with (\ref{coe1}), leads to
$$
	\hat{e}_{f}^{(\alpha,\beta)}(n,x)=(1-x)^{\max\{\frac{\alpha}{2}+\frac{1}{4},0\}}(1+x)^{\max\{\frac{\beta}{2}+\frac{1}{4},0\}}(x-a)e_{f}^{(\alpha,\beta)}(n,x)= {\cal O}(n^{-\lambda-1}),
$$
when $x\neq a$, where the constant in $\mathcal{O}$-term is independent of $x$. While when $x=a$, it is obvious that $\hat{e}_{f}^{(\alpha,\beta)}(n,a)=0$. This completes the proof.
\end{proof}

Corollary \ref{Thm15} indicates that the weighted pointwise error of $S_{n}^{(\alpha,\beta)}[f]$ in the uniform norm is one order higher in convergence rate than the optimal polynomial approximation. However, if we consider the weighted pointwise error by removing the factor $(x-a):$
$$	\tilde{e}_{f}^{(\alpha,\beta)}(n,x)=(1-x)^{\max\{\frac{\alpha}{2}+\frac{1}{4},0\}}(1+x)^{\max\{\frac{\beta}{2}+\frac{1}{4},0\}}e_{f}^{(\alpha,\beta)}(n,x),
$$
then we can obtain similarly the following asymptotic estimate.
\begin{corollary}
Suppose that $\lambda>0$ and $f(x)$ is defined by \eqref{gpfun} {\rm{(}}or \eqref{gafun} where $\lambda$ is not an even number{\rm)}, then for  $\alpha,\beta>-1$, we have the following estimate
\begin{equation}
  \|\tilde{e}_{f}^{(\alpha,\beta)}\|_{\infty}={\cal O}(n^{-\lambda}).
\end{equation}
\end{corollary}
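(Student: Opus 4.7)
The plan is to follow the template of Corollary \ref{Thm15}, but since the multiplicative factor $(x-a)$ is no longer available to neutralise the $|x-a|^{-1}$ singularity in the coefficient bound \eqref{coe1}, I will gain uniformity by splitting $[-1,1]$ into two regions around $a$. Concretely, I will bound $\tilde{e}_{f}^{(\alpha,\beta)}(n,x)$ by $\mathcal{O}(n^{-\lambda})$ separately on $\{x:|x-a|\ge n^{-1}\}$ and on $\{x:|x-a|<n^{-1}\}$, and then glue the two estimates.

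For $|x-a|\ge n^{-1}$, I combine the pointwise coefficient bound from \eqref{coe1}, namely $a_{n}^{(\alpha,\beta)}(x;g)=|x-a|^{-1}\mathcal{O}(n^{-\lambda-1/2})$, with the representation \eqref{Jtruerr} and the weighted uniform bound \eqref{Jacweiuniform} on $P_n^{(\alpha,\beta)}$, $P_{n+1}^{(\alpha,\beta)}$. This yields
\begin{equation*}
\tilde{e}_{f}^{(\alpha,\beta)}(n,x)\le |x-a|^{-1}\,\mathcal{O}(n^{-\lambda-1})\le n\cdot \mathcal{O}(n^{-\lambda-1})=\mathcal{O}(n^{-\lambda}),
\end{equation*}
uniformly in the outer region.

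For $|x-a|<n^{-1}$, the pointwise bound \eqref{coe1} degrades, so I invoke the uniform coefficient bound \eqref{coe11}, which asserts $a_{n}^{(\alpha,\beta)}(x;g)=\mathcal{O}(n^{-\lambda+1/2})$ for every $x\in[-1,1]$. Multiplying by the factor $\mathcal{O}(n^{-1/2})$ from \eqref{Jacweiuniform} gives
\begin{equation*}
\tilde{e}_{f}^{(\alpha,\beta)}(n,x)=\mathcal{O}(n^{-\lambda+1/2})\cdot\mathcal{O}(n^{-1/2})=\mathcal{O}(n^{-\lambda})
\end{equation*}
uniformly in the inner region. Combining the two cases produces the claimed $\|\tilde{e}_{f}^{(\alpha,\beta)}\|_{\infty}=\mathcal{O}(n^{-\lambda})$. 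The extension to $f$ of the form \eqref{gafun} follows verbatim from the decomposition $z(x)|x-a|^{\lambda}=f(x)+f^{\ast}(x)$ used in the previous corollaries.

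I do not anticipate a serious obstacle: both key ingredients (the pointwise bound \eqref{coe1} and the uniform bound \eqref{coe11}) were already established in Theorem \ref{Thm32}, and \eqref{Jacweiuniform} furnishes the matching weighted estimate on the Jacobi polynomials. The only mild subtlety is to verify that the implicit constants in the two regions are independent of $x$, which is guaranteed by the fact that both \eqref{coe11} and \eqref{Jacweiuniform} hold with constants independent of $x\in[-1,1]$.
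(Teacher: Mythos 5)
Your proposal is correct, but it does more work than is needed, and the extra work is where it diverges from the paper. The paper's proof is a one-liner: the uniform coefficient bound \eqref{coe11}, $a_{n}^{(\alpha,\beta)}(x;g)=\mathcal{O}(n^{-\lambda+1/2})$ valid for \emph{every} $x\in[-1,1]$ when $\lambda>0$, combined with the weighted bound \eqref{Jacweiuniform} on $P_n^{(\alpha,\beta)}$ and $P_{n+1}^{(\alpha,\beta)}$ via the representation \eqref{Jtruerr}, already gives $\tilde{e}_{f}^{(\alpha,\beta)}(n,x)=\mathcal{O}(n^{-\lambda+1/2})\cdot\mathcal{O}(n^{-1/2})=\mathcal{O}(n^{-\lambda})$ uniformly on the whole interval. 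This is exactly your ``inner region'' argument, and nothing in it uses $|x-a|<n^{-1}$; so your outer-region case, with the splitting at $|x-a|=n^{-1}$ and the trade-off $|x-a|^{-1}\le n$ against the sharper bound \eqref{coe1}, is logically sound but entirely redundant. The only thing the split buys you is a reminder that on the outer region the error is in fact the better $|x-a|^{-1}\mathcal{O}(n^{-\lambda-1})$, which is the content of Corollary \ref{Thm15} rather than of this corollary. Your handling of \eqref{gafun} via $z(x)|x-a|^{\lambda}=f(x)+f^{*}(x)$ and your attention to the $x$-independence of the implied constants are both correct and match the paper's conventions.
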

\begin{proof}
  This is obtained by the uniform estimate $a_{n}^{(\alpha,\beta)}(x;g)=\mathcal{O}(n^{-\lambda+1/2})$ and (\ref{Jacweiuniform}).
\end{proof}

\begin{figure}[hpbt]
\centering
\includegraphics[width=.341\textwidth]{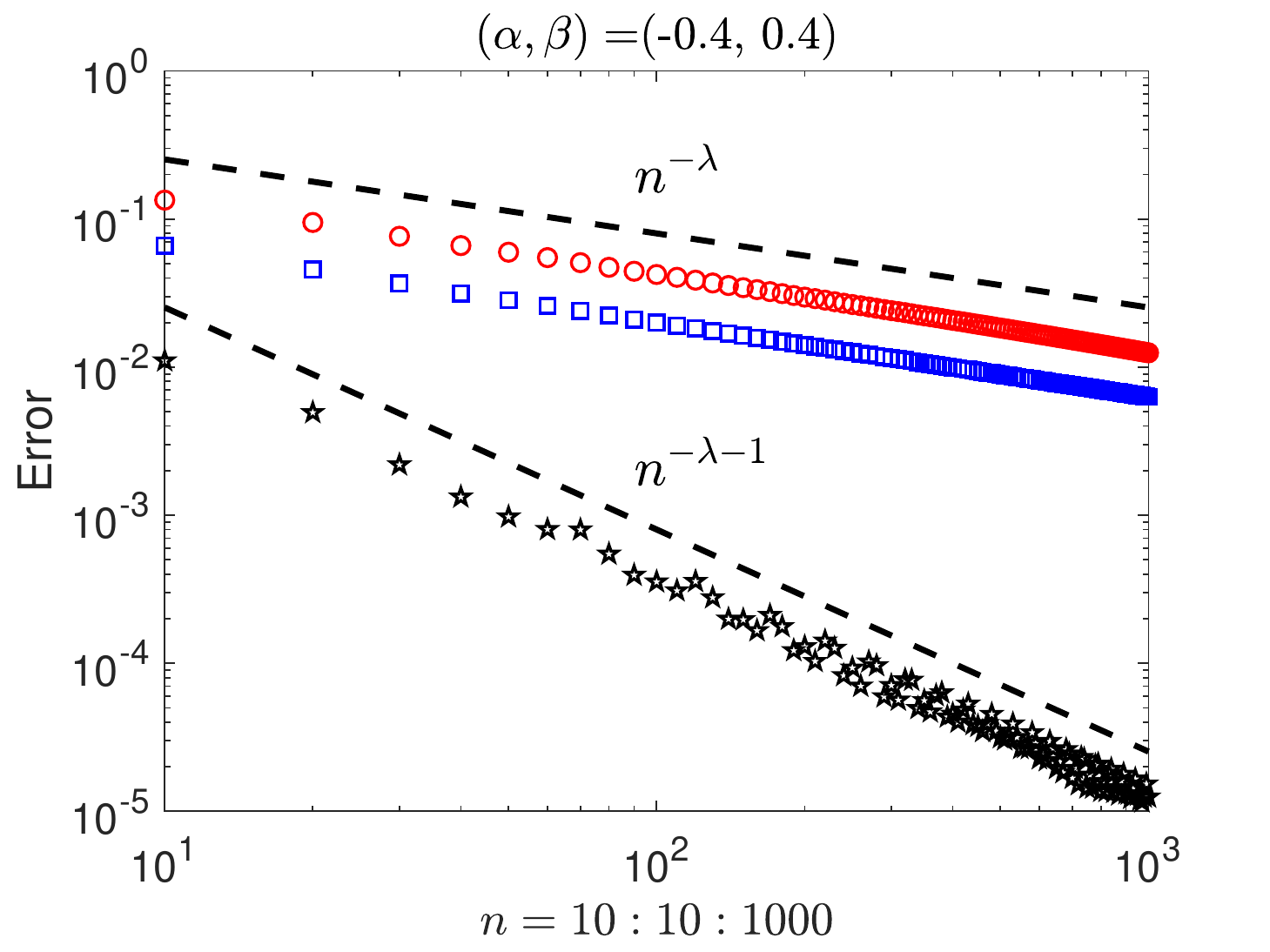}\hspace{-3pt}
\includegraphics[width=.325\textwidth]{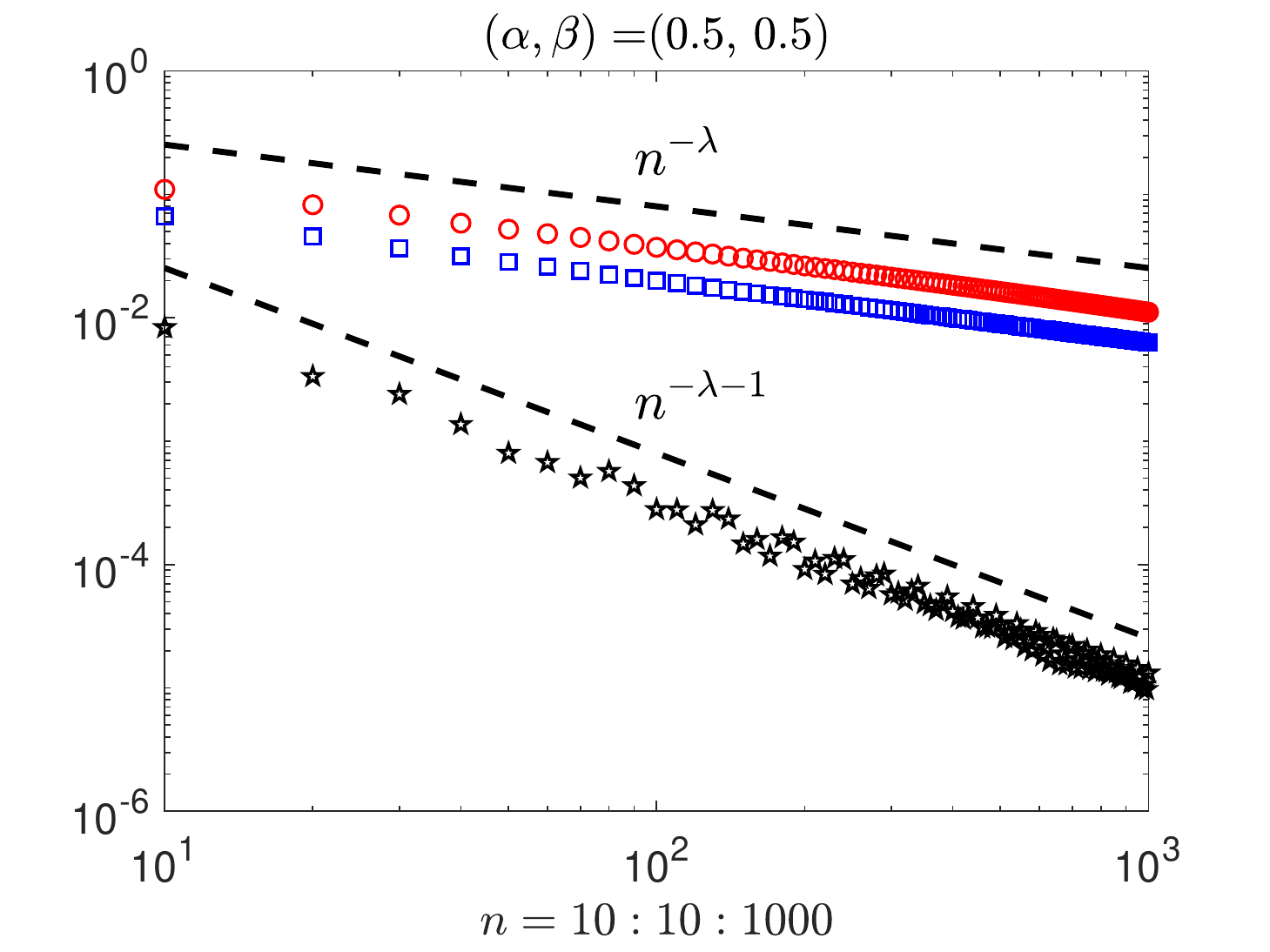}\hspace{-3pt}
\includegraphics[width=.325\textwidth]{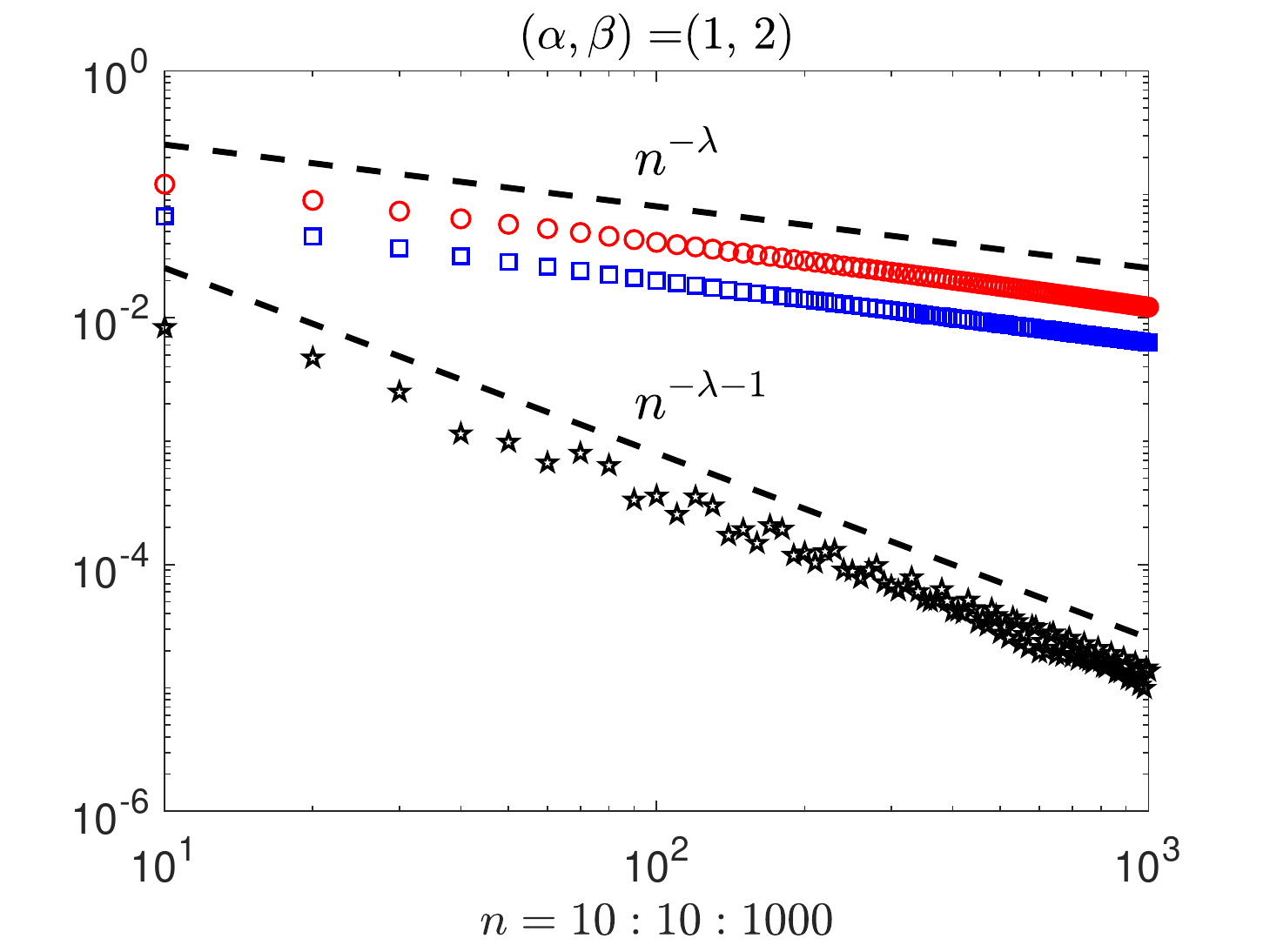} \\ [10pt]
\includegraphics[width=.341\textwidth]{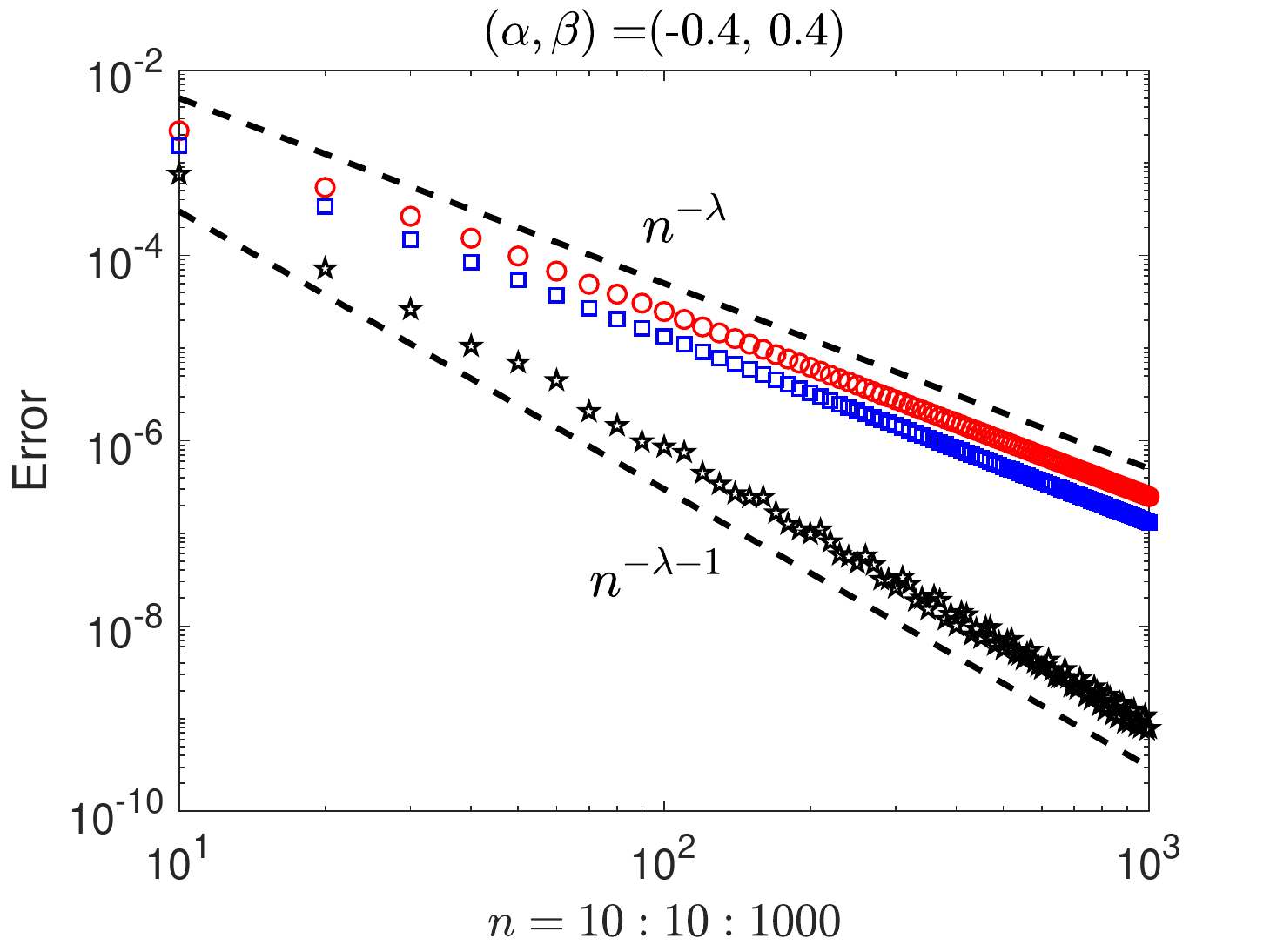}\hspace{-3pt}
\includegraphics[width=.325\textwidth]{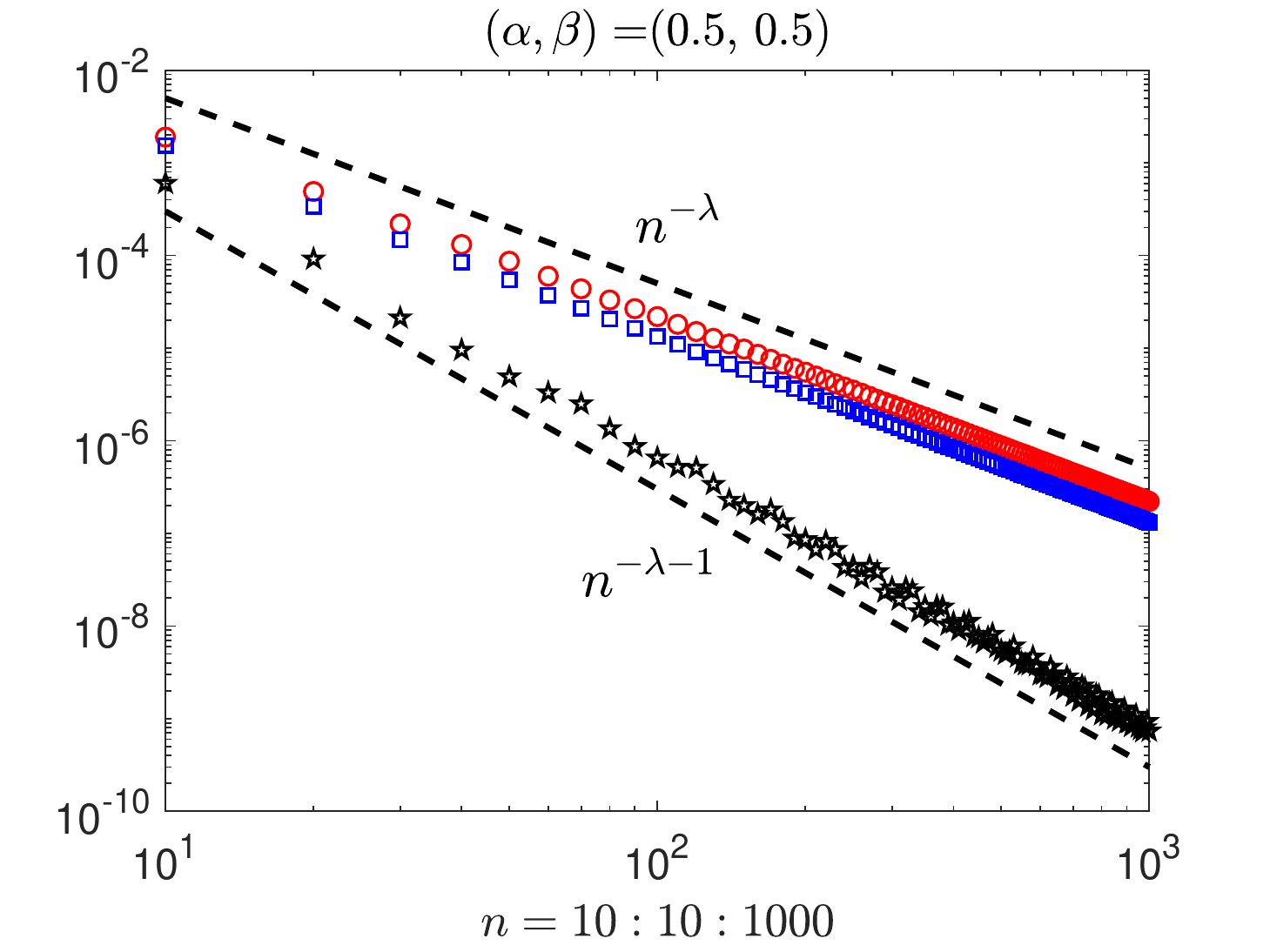}\hspace{-3pt}
\includegraphics[width=.325\textwidth]{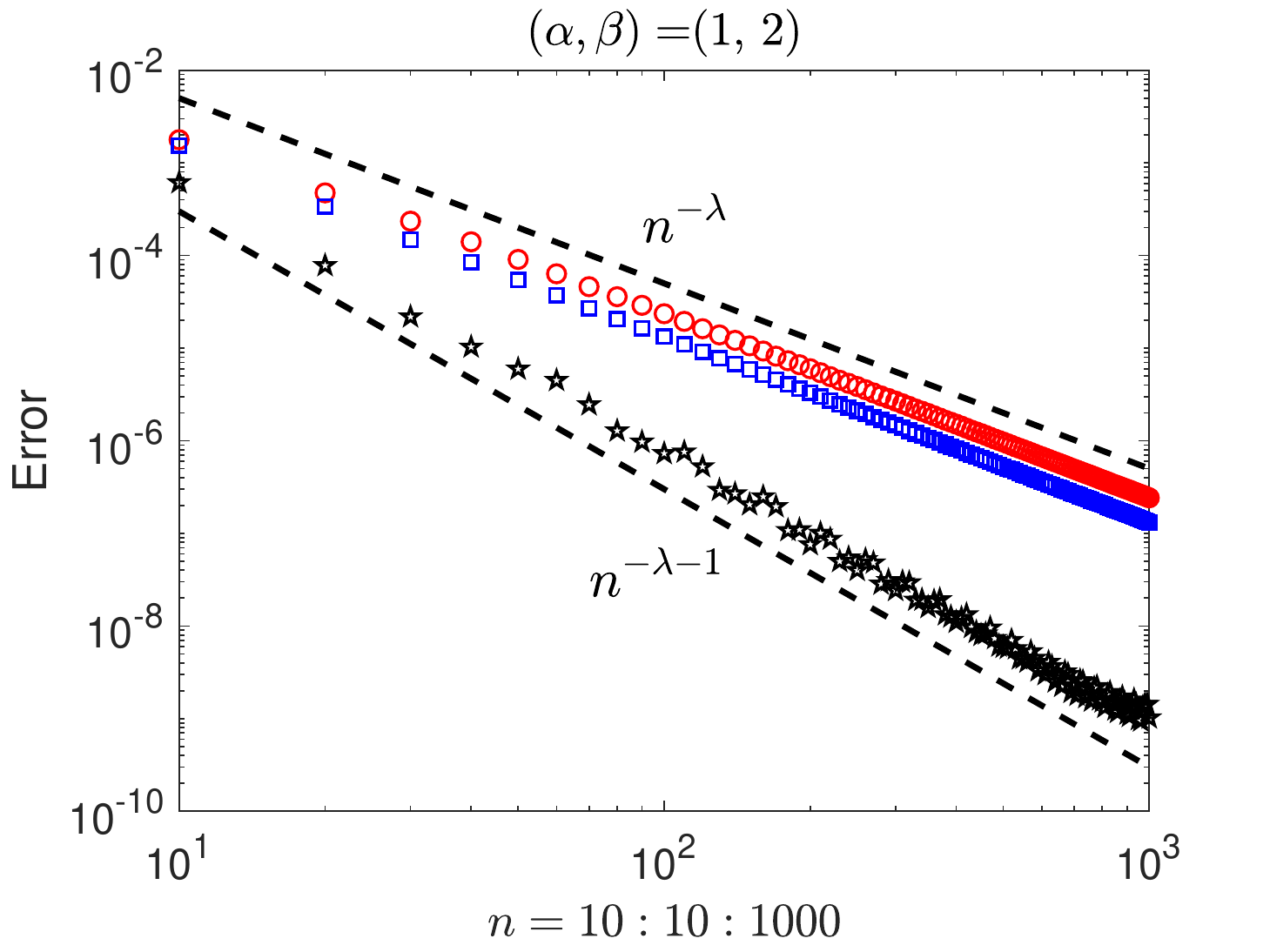}
\caption{Plots of $\|\hat{e}_{f}^{(\alpha,\beta)}\|_{\infty}$ {\rm(}black pentagram{\rm)}, $\|\tilde{e}_{f}^{(\alpha,\beta)}\|_{\infty}$ {\rm(}red circle{\rm)} and  the best approximation $\|f-p^{*}\|_{\infty}$ {\rm(}blue square{\rm)} for $f(x)=(x-1/4)_{+}^{\lambda}$ with $\lambda=1/2$ {\rm(}first row{\rm)} and $\lambda=2$ {\rm(}second row{\rm)}.}\label{figure51}
\end{figure}

Numerical results  in Fig.\! \ref{figure51}  illustrate the optimal estimates on these two kinds of weighted pointwise errors.

\section{Extensions to  functions with endpoint singularities}\label{sec5}
 In this section, we intend to study the pointwise error estimates and local superconvergence of Jacobi expansions to functions with endpoint singularities, which are stated in the following theorems.

In fact, the estimates in Lemma \ref{van der Corput Jacobi} can be generalized to the case with $a=-1$.
\begin{lemma}\label{van der boundary}
	Let $\alpha,\beta,\gamma,\delta>-1$ and $\psi(x)\in W_{\rm AC}(-1,1)$, then for $n\gg1$ we have
	\begin{equation}\label{vanbound}
		\int_{-1}^{1}\!(1+x)^{\gamma}(1-x)^{\delta}P_{n}^{(\alpha,\beta)}(x)\psi(x)\,\mathrm{d}x=\|\psi\|_{W_{\rm AC}(-1,1)}\cdot\mathcal{O}\big(n^{-\min\left\{2\delta-\alpha+2,2\gamma-\beta+2,\frac{3}{2}\right\}}\big).
	\end{equation}
\end{lemma}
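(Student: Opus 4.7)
My plan is to reduce \eqref{vanbound} to two applications of Lemma \ref{van der Corput Jacobi} by splitting the integration interval at an interior point and handling the two endpoint singularities at $x=\pm1$ separately. Concretely, I would write
\[
\int_{-1}^{1}(1+x)^{\gamma}(1-x)^{\delta}P_{n}^{(\alpha,\beta)}(x)\psi(x)\,\mathrm{d}x = I_- + I_+,
\]
where $I_+$ and $I_-$ are the integrals over $[0,1]$ and $[-1,0]$ respectively. This decomposition is natural because the factor $(1-x)^\delta$ is smooth and bounded away from zero on $[-1,0]$, while $(1+x)^\gamma$ is smooth and bounded away from zero on $[0,1]$, so each half has at most one singular weight and falls under the scope of \eqref{vdcorputJaca}.

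For $I_+$, the weight $(1+x)^\gamma$ is smooth with bounded derivative on $[0,1]$, so $\tilde\psi(x):=(1+x)^\gamma\psi(x)$ lies in $W_{\rm AC}(0,1)$ with $\|\tilde\psi\|_{W_{\rm AC}(0,1)}\le C_\gamma\|\psi\|_{W_{\rm AC}(-1,1)}$. Invoking \eqref{vdcorputJaca} with $a=0$ and with the internal $\gamma$ parameter of that lemma taken to be zero, I obtain
\[
I_+ = \|\psi\|_{W_{\rm AC}(-1,1)}\cdot\mathcal{O}\bigl(n^{-\min\{2\delta-\alpha+2,\,3/2\}}\bigr).
\]
For $I_-$, I would apply the change of variable $y=-x$ together with the parity identity $P_n^{(\alpha,\beta)}(-y)=(-1)^nP_n^{(\beta,\alpha)}(y)$, which yields
\[
I_- = (-1)^n\int_0^1(1-y)^{\gamma}(1+y)^{\delta}P_n^{(\beta,\alpha)}(y)\psi(-y)\,\mathrm{d}y.
\]
This is of exactly the same shape as $I_+$ after swapping $(\alpha,\gamma)\leftrightarrow(\beta,\delta)$, and since $\|\psi(-\cdot)\|_{W_{\rm AC}(-1,1)}=\|\psi\|_{W_{\rm AC}(-1,1)}$, the same argument produces $I_- = \|\psi\|_{W_{\rm AC}(-1,1)}\cdot\mathcal{O}(n^{-\min\{2\gamma-\beta+2,\,3/2\}})$. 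Adding the two bounds yields \eqref{vanbound}.

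I do not anticipate any substantive obstacle. The only minor point of care is verifying that multiplication by $(1+x)^\gamma$ preserves $W_{\rm AC}$ regularity on $[0,1]$, which is immediate because this weight, together with its derivative, is uniformly bounded on that interval for every fixed $\gamma>-1$; likewise the involution $x\mapsto -x$ preserves the $W_{\rm AC}$ norm. All of the genuinely asymptotic content---the Hilb-type asymptotics of $P_n^{(\alpha,\beta)}$ and the van der Corput-type decay of the resulting Bessel integrals---has already been absorbed into Lemma \ref{van der Corput Jacobi}, so what remains here is essentially bookkeeping.
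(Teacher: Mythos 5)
Your proof is correct and follows essentially the same route as the paper: split the integral at $x=0$, absorb the nonsingular weight into $\psi$ on each half, apply Lemma \ref{van der Corput Jacobi} to the $[0,1]$ piece, and treat the $[-1,0]$ piece symmetrically (the paper simply says ``similarly,'' which your reflection via $P_n^{(\alpha,\beta)}(-x)=(-1)^nP_n^{(\beta,\alpha)}(x)$ makes explicit). No gaps.
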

\begin{proof}
	In order to obtain the estimate, we split the integral in (\ref{vanbound}) into two parts as follows
	\begin{equation*}		\int_{-1}^{0}\!(1+x)^{\gamma}P_{n}^{(\alpha,\beta)}(x)(1-x)^{\delta}\psi(x)\,\mathrm{d}x+\int_{0}^{1}\!(1-x)^{\delta}P_{n}^{(\alpha,\beta)}(x)(1+x)^{\gamma}\psi(x)\,\mathrm{d}x.
	\end{equation*}
	Using  Lemma \ref{van der Corput Jacobi}, we can derive the estimate
	\begin{equation*}\label{vanbound1}
\begin{split}
		\int_{0}^{1}\!(1-x)^{\delta}P_{n}^{(\alpha,\beta)}(x) (1+x)^{\gamma}\psi(x)\,\mathrm{d}x&=\|(1+x)^{\gamma}\psi(x)\|_{W_{\rm AC}(0,1)}\cdot\mathcal{O}\big(n^{-\min\{2\delta-\alpha+2,\frac{3}{2}\}}\big)
\\&=\|\psi\|_{W_{\rm AC}(0,1)}\cdot\mathcal{O}\big(n^{-\min\{2\delta-\alpha+2,\frac{3}{2}\}}\big).
\end{split}
	\end{equation*}
	Similarly, we can show that
	\begin{equation*}
		\int_{-1}^{0}\!(1+x)^{\gamma}P_{n}^{(\alpha,\beta)}(x)(1-x)^{\delta}\psi(x)\,\mathrm{d}x=\|\psi\|_{W_{\rm AC}(-1,0)}\cdot\mathcal{O}\big(n^{-\min\{2\gamma-\beta+2,\frac{3}{2}\}}\big).
	\end{equation*}
Then the estimate (\ref{vanbound}) follows immediately from the above.
\end{proof}

We have the following results on the pointwise estimates for the endpoint singularities.
\begin{theorem}\label{ThmB}
Define $f_1(x)=(1- x)^\lambda z(x)\; (\lambda+\alpha>-1)$ and $f_2(x)=(1+ x)^\lambda z(x)\; (\lambda+\beta>-1)$ where the given function $z(x)$ is smooth with
$z(\pm 1)\not= 0$. Then for $\lambda>-1$  not an integer, we have the following pointwise error estimates.
\begin{itemize}
\item[{\rm(i)}]  For $x\in (-1,1)$, we have
\begin{equation}\label{PerrJac1B-2}
e_{f_1}^{(\alpha,\beta)}(n,x)\le C_{1}(x)n^{-2\lambda-\alpha-\frac{3}{2}},\quad e_{f_2}^{(\alpha,\beta)}(n,x)\le C_{2}(x)n^{-2\lambda-\beta-\frac{3}{2}},
\end{equation}
where  $C_i(x)$ is independent of $n$ and has the behaviours near $x=\pm 1$ as follows for some $\delta>0$ and $0<\xi\le \delta$
\begin{equation}\label{PerrJac1B}
\begin{split}
&C_1(1- \xi)\le D(1)\,\xi^{-\max\{\frac \alpha2+\frac 14,0\}-1},\quad C_1(-1+ \xi)\le D(-1)\,\xi^{-\max\{\frac \beta2+\frac 14,0\}}, \\
&C_2(1- \xi)\le D(1)\,\xi^{-\max\{\frac \alpha2+\frac 14,0\}},\quad C_2(-1+ \xi)\le D(-1)\,\xi^{-\max\{\frac \beta2+\frac 14,0\}-1}
\end{split}
\end{equation}
with $D(\pm 1)$ independent of $\xi$ and $n$.
\smallskip
\item[{\rm (ii)}]  At  $x=\pm 1$,  we have
\begin{equation}\label{PerrJac2B}\begin{split}
e_{f_1}^{(\alpha,\beta)}(n,1)\le C n^{-2\lambda},\quad e_{f_2}^{(\alpha,\beta)}(n,-1)\le C n^{-2\lambda}.
\end{split}\end{equation}
\item[{\rm (iii)}]  For the weighted pointwise error
\begin{equation}\label{PerrJac2C}
\begin{aligned}
	(1-x)^{\max\{\frac{\alpha}{2}+\frac{1}{4},0\}}(1+x)^{\max\{\frac{\beta}{2}+\frac{1}{4},0\}}(1-x)e_{f_1}^{(\alpha,\beta)}(n,x)= {\cal O}(n^{-2\lambda-\alpha-\frac{3}{2}}), \\ (1-x)^{\max\{\frac{\alpha}{2}+\frac{1}{4},0\}}(1+x)^{\max\{\frac{\beta}{2}+\frac{1}{4},0\}}(1+x)e_{f_2}^{(\alpha,\beta)}(n,x)= {\cal O}(n^{-2\lambda-\beta-\frac{3}{2}}),
\end{aligned}\end{equation}
where the $\mathcal{O}$-terms involved are independent of $x$.
\end{itemize}
\end{theorem}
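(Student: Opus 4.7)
The plan is to transplant the framework of Theorem \ref{Thm13} to the boundary case. I treat only $f_1$, since the claims for $f_2$ follow from the symmetry $P_n^{(\alpha,\beta)}(-x)=(-1)^n P_n^{(\beta,\alpha)}(x)$, which interchanges $(\alpha,\beta)$ and the two endpoints. Applying the pointwise error identity \eqref{Jtruerr} reduces the task to estimating $a_n^{(\alpha,\beta)}(x;g)$ with $g(x,y)=(f_1(x)-f_1(y))/(x-y)$, combined with the Jacobi polynomial bounds from Corollary \ref{Bineua} in the interior and $|P_n^{(\alpha,\beta)}(1)|=\mathcal{O}(n^{\alpha})$ at the endpoint. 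For fixed $x\in(-1,1)$, the function $g(x,\cdot)$ inherits the singularity structure of $f_1$: smooth at $y=-1$ and at $y=x$ (removable), and of the form $(1-y)^\lambda\cdot(\text{smooth in }y)+(\text{smooth in }y)$ near $y=1$, as seen from the decomposition $g(x,y)=f_1(x)/(x-y)-(1-y)^\lambda z(y)/(x-y)$; note that the kernel $1/(x-y)$ becomes $(1-x)^{-1}$-singular as $x\to 1$.

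The central step is to prove
\[
a_n^{(\alpha,\beta)}(x;g)=\tilde C(x)\,\mathcal{O}(n^{-2\lambda-\alpha-1}),\qquad \tilde C(x)=\mathcal{O}\!\big((1-x)^{-1}\big)\ \text{as}\ x\to 1,
\]
with $\tilde C(x)$ bounded on $[-1+\delta,1-\delta]$ for any fixed $\delta>0$. This mirrors Lemma \ref{JEC}: apply Rodrigues' formula \eqref{Rodrigues} to transfer $K=\lceil 2\lambda+\alpha+\tfrac12\rceil$ derivatives onto the amplitude. By Leibniz's rule, $\partial_y^K g(x,y)=\sum_{j=0}^K (1-y)^{\lambda-j}\phi_j(x,y)$ with $\phi_j(x,\cdot)$ smooth on $[-1,1]$ and $\|\phi_j(x,\cdot)\|_{W_{\mathrm{AC}}}=\mathcal{O}((1-x)^{-1})$. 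After pairing with the induced weight $\omega^{(\alpha+K,\beta+K)}(y)$, each summand has the form $(1-y)^{\lambda+\alpha+K-j}(1+y)^{\beta+K}\phi_j(x,y)P_{n-K}^{(\alpha+K,\beta+K)}(y)$, and Lemma \ref{van der boundary} with $\delta_j=\lambda+\alpha+K-j$, $\gamma=\beta+K$ yields integral decay $\mathcal{O}(n^{-(2\lambda+\alpha-K+2)})$ for the worst ($j=K$) term, with the choice of $K$ ensuring $2\lambda+\alpha-K+2\le\tfrac32$ so this endpoint-$y=1$ exponent activates the minimum. Multiplying by the Rodrigues prefactor $\big[2^K(n)_K\sigma_n^{(\alpha,\beta)}\big]^{-1}=\mathcal{O}(n^{1-K})$ produces the claimed $\mathcal{O}(n^{-2\lambda-\alpha-1})$; the $j<K$ pieces are strictly subdominant.

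Assembling this with Corollary \ref{Bineua} gives part (i) together with the behaviour \eqref{PerrJac1B} of $C_1(x)$: the $(1-x)^{-\max\{\alpha/2+1/4,0\}-1}$ blow-up is the product of $(1-x)^{-\max\{\alpha/2+1/4,0\}}$ from the polynomial bound and the extra $(1-x)^{-1}$ from $\tilde C(x)$. For part (ii) at $x=1$, since $f_1(1)=0$ one uses the simpler tail bound: the Jacobi coefficients of $f_1$ themselves satisfy $a_k^{(\alpha,\beta)}=\mathcal{O}(k^{-2\lambda-\alpha-1})$ by the same Rodrigues--van der Corput reduction applied directly to $f_1$ (no $1/(x-y)$), and
\[
e_{f_1}^{(\alpha,\beta)}(n,1)=\Big|\sum_{k>n}a_k^{(\alpha,\beta)}P_k^{(\alpha,\beta)}(1)\Big|\le C\sum_{k>n}k^{-2\lambda-1}=\mathcal{O}(n^{-2\lambda}).
\]
Part (iii) follows immediately from part (i) and the weighted bound \eqref{Jacweiuniform} used in Corollary \ref{Thm15}: the extra factor $(1-x)$ absorbs the $(1-x)^{-1}$ blow-up of $C_1(x)$ at $x=1$, while $(1-x)^{\max\{\alpha/2+1/4,0\}}(1+x)^{\max\{\beta/2+1/4,0\}}$ neutralises the boundary growth of $P_n^{(\alpha,\beta)}$. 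The principal obstacle is the sharp rate for $a_n^{(\alpha,\beta)}(x;g)$: Lemma \ref{van der boundary} in isolation is capped at $\mathcal{O}(n^{-3/2})$, so breaking this ceiling requires the Rodrigues trade above, choosing $K$ large enough that the endpoint-$y=1$ index rather than the generic oscillatory index $\tfrac32$ controls the minimum, while carefully propagating the $(1-x)^{-1}$ dependence through all the $W_{\mathrm{AC}}$-norms that arise.
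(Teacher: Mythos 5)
Your architecture coincides with the paper's: reduce to $f_1$ by the reflection $P_n^{(\alpha,\beta)}(-x)=(-1)^nP_n^{(\beta,\alpha)}(x)$, feed the error formula \eqref{Jtruerr} with the bound $a_n^{(\alpha,\beta)}(x;g)=(1-x)^{-1}\mathcal{O}(n^{-2\lambda-\alpha-1})$ obtained from Rodrigues' formula plus Lemma \ref{van der boundary}, use the tail sum $\sum_{j>n}\mathcal{O}(j^{-2\lambda-\alpha-1})\,\mathcal{O}(j^{\alpha})$ at $x=1$, and absorb the $(1-x)^{-1}$ blow-up with the extra weight for part (iii). Parts (ii) and (iii) of your argument are essentially the paper's.

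The genuine gap is the one you yourself flag as ``the principal obstacle'' and then do not resolve: the claim that after $K$ integrations by parts the amplitudes satisfy $\|\phi_j(x,\cdot)\|_{W_{\rm AC}(-1,1)}=\mathcal{O}((1-x)^{-1})$. This is the crux, and it is not obtainable by the route you sketch. A term-by-term Leibniz expansion of $\partial_y^K g(x,y)$ produces summands carrying factors $(x-y)^{-(K-j+1)}$ whose singularities at $y=x$ cancel only collectively, so the individual $\phi_j$ are not smooth on $[-1,1]$; and the Taylor-remainder representation of the grouped derivative, $\partial_y^{k}\big(\frac{(1-x)^{\lambda}-(1-y)^{\lambda}}{x-y}\big)=\frac{(-1)^{k+1}}{k+1}(\lambda)_{k+1}(1-\xi)^{\lambda-k-1}$ with $\xi$ between $x$ and $y$, only yields the pointwise bound $(1-x)^{\lambda-K-1}$ as $x\to1$, which is far worse than $(1-x)^{-1}$. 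The paper closes this in two steps: it first splits $g(x,y)=\frac{z(x)-z(y)}{x-y}(1-y)^{\lambda}+z(x)\frac{(1-x)^{\lambda}-(1-y)^{\lambda}}{x-y}$, disposes of the first (smooth-amplitude) piece uniformly in $x$ by citing \cite[Theorem 3.1]{Xiang2020}, and then, for the second piece, proves that the amplitude $\psi_3(x,y)=(1-y)^{m-\lambda}\partial_y^{m}\big(\frac{(1-x)^{\lambda}-(1-y)^{\lambda}}{x-y}\big)$ is of one sign and monotone in $y$ on $[-1,1]$ (via the Taylor-remainder sign analysis of Appendix~A). Monotonicity is what converts the bad pointwise estimates into $\max_y|\psi_3(x,y)|=|\psi_3(x,1)|\le C(1-x)^{-1}$ and $\int_{-1}^{1}|\partial_y\psi_3|\,\mathrm{d}y=|\psi_3(x,1)-\psi_3(x,-1)|\le C(1-x)^{-1}$, i.e.\ exactly the $W_{\rm AC}$ control you need. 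Without this (or an equivalent) argument, the central estimate $a_n^{(\alpha,\beta)}(x;g)=(1-x)^{-1}\mathcal{O}(n^{-2\lambda-\alpha-1})$, and hence parts (i) and (iii), remain unproven.
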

\begin{proof}
For simplicity, we only consider the estimates for $f_1$. By $P_n^{(\alpha,\beta)}(-x)=(-1)^nP_n^{(\beta,\alpha)}(x)$ it leads to the desired results for $f_2$.

Notice that
\begin{equation}\label{bd1}\begin{aligned}
  a_{n}^{(\alpha,\beta)}(x;g)
&=\frac{1}{\sigma_{n}^{(\alpha,\beta)}}\left[\int_{-1}^{1}\!\frac{z(x)-z(y)}{x-y}(1-y)^{\lambda}P_{n}^{(\alpha,\beta)}(y)\omega^{(\alpha,\beta)}(y)\,\mathrm{d}y\right.\\
	&\qquad\qquad\left.+\int_{-1}^{1}\!\frac{(1-x)^{\lambda}-(1-y)^{\lambda}}{x-y}z(x)P_{n}^{(\alpha,\beta)}(y)\omega^{(\alpha,\beta)}(y)\,\mathrm{d}y\right]
\end{aligned}
\end{equation}
 and $\frac{z(x)-z(y)}{x-y}$ is smooth in $[-1,1]$ and satisfies for some $\xi$ between $x$ and $y$ that
\begin{equation}\label{Taylor}
\partial_y^{k}\left(\frac{z(x)-z(y)}{x-y}\right)=k!\frac{z(x)-\sum_{j=0}^k\frac{z^{(j)}(y)}{j!}(x-y)^j}{(x-y)^{k+1}}=\frac{1}{k+1}z^{(k+1)}(\xi)
\end{equation}
which is uniformly bounded independent of $x$ and $y$ for any fixed positive integer $k$.
Then it follows from the proof of \cite[Theorem 3.1]{Xiang2020} with $\mu=0$ that
\begin{equation}\label{bd2}
   \frac{1}{\sigma_{n}^{(\alpha,\beta)}}\int_{-1}^{1}\!\frac{z(x)-z(y)}{x-y}(1-y)^{\lambda}P_{n}^{(\alpha,\beta)}(y)\omega^{(\alpha,\beta)}(y)\,\mathrm{d}y={\cal O}(n^{-2\lambda-\alpha-1})
  \end{equation}
independent of $x$.

Now we turn to the second term in \eqref{bd1}. Firstly, take $m$ be a positive integer such that $\alpha+2\lambda+2-m\le \frac{3}{2}\le \beta+m+2$ and $m>\lambda$, then it follows from \cite{Xiang2020} that
\begin{equation}\label{bd3}\begin{split} &\frac{z(x)}{\sigma_{n}^{(\alpha,\beta)}}\int_{-1}^{1}\!\frac{(1-x)^{\lambda}-(1-y)^{\lambda}}{x-y}P_{n}^{(\alpha,\beta)}(y)\omega^{(\alpha,\beta)}(y)\,\mathrm{d}y\\
=&\frac{z(x)}{2^{m}(n)_{m}\sigma_{n}^{(\alpha,\beta)}}
\int_{-1}^{1}\!(1-y)^{\lambda-m}P_{n-m}^{(\alpha+m,\beta+m)}(y)\psi_{3}(x,y)\omega^{(\alpha+m,\beta+m)}(y)\,\mathrm{d}y
\end{split}\end{equation}
where
$${\displaystyle\psi_{3}(x,y)=(1-y)^{m-\lambda}\partial_y^{m}\Big(\frac{(1-x)^{\lambda}-(1-y)^{\lambda}}{x-y}\Big)}.$$
Analogously, $\psi_{3}(x,y)$ is continuous and for any integer $k>\lambda$, there exists $\xi_1$ between $x$ and $y$ such that
\begin{equation}\label{psi3}\begin{split}	(-1)^{\lfloor\lambda\rfloor+1}\partial_y^{k}\Big(\frac{(1-x)^{\lambda}-(1-y)^{\lambda}}{x-y}\Big)
=\frac{(-1)^{\lfloor\lambda\rfloor+1}}{k+1}(\lambda)_{k+1}(1-\xi_1)^{\lambda-k-1}(-1)^{k+1}>0.
\end{split}\end{equation}
In addition, by an  argument similar to the proof of the monotonicity of $\psi_{2}$ in Appendix \ref{AppendixA} but with $1-y$ in place of $y-a$ and $m$ in place  of $m+1$, we can readily prove that
\begin{equation*}
	(-1)^{\lfloor\lambda\rfloor+1}\partial_{y}\psi_{3}(x,y)>0.
\end{equation*}
This together with (\ref{psi3}) indicates that $\psi_{3}$ is positive and increasing w.r.t $y\in[-1,1]$ if $\lfloor\lambda\rfloor$ is odd, otherwise $\psi_{3}$ will be negative and decreasing. As a result, it leads to
\begin{align*}
	&\max_{y\in[-1,1]}|\psi_{3}(x,y)|=|\psi_{3}(x,1)|\leq C(1-x)^{-1},\\
 &\int_{-1}^{1}|\partial_y\psi_{3}(x,y)|\mathrm{d}y=\left|\psi_{3}(x,1)-\psi_{3}(x,-1)\right|\leq C(1-x)^{-1},
\end{align*}
so $\psi_{3}(x,\cdot)\in W_{\rm AC}(-1,1)$. Then by Lemma \ref{van der boundary}, it establishes from \eqref{bd1} and \eqref{bd3} that
\begin{equation}\label{bd4}
  a_n^{(\alpha,\beta)}(x;g)=(1-x)^{-1}\cdot{\cal O}(n^{-2\lambda-\alpha-1}),
\end{equation}
which together with (\ref{Jtruerr}) and Corollary \ref{Bineua}  leads to the desired result (\ref{PerrJac1B-2})-(\ref{PerrJac1B}).

When $x=1$, it is difficult to establish \eqref{PerrJac2B} from \eqref{Jtruerr}, but it can be derived from the estimate (see \cite[Theorem 3.1]{Xiang2020})
\begin{equation*}
  a_n^{(\alpha,\beta)}(x;f_1)={\cal O}(n^{-2\lambda-\alpha-1})
\end{equation*}
and Theorem \ref{Thm23}, that is
\begin{equation*}\label{PerrJac2BD}
\begin{split}
e_{f_1}^{(\alpha,\beta)}(n,1)=\Big|\sum_{j=n+1}^{\infty}a^{(\alpha,\beta)}_n(x;f_1)P^{(\alpha,\beta)}_n(1)\Big|\le C\sum_{j=n+1}^{\infty} j^{-2\lambda-\alpha-1}j^{\alpha}\le C n^{-2\lambda}.
\end{split}
\end{equation*}

Finally, we obtain the weighted pointwise error estimate (\ref{PerrJac2C}) by analogous arguments as the proofs Theorem \ref{Thm13} and Corollary \ref{Bineua}.
\end{proof}

Based on \eqref{bd4} and Theorem \ref{thm1}, we may give some improved and optimal bounds for $\|f_i-S_n^{(\alpha,\beta)}[f_i]\|_{\infty}$ than those in \cite{XiangLiu}.

\begin{theorem}\label{ThmB2}
Let $f_1(x)=(1- x)^\lambda z(x)\; (\lambda+\alpha>-1)$ and $f_2(x)=(1+ x)^\lambda z(x)\; (\lambda+\beta>-1)$ with  $\lambda>0$  not an integer and $z(x)$ defined as above. Then for  $\min\{\alpha,\beta\}\ge -\frac{1}{2}$, it holds
 \begin{equation*}\label{PerrJac3B}
\|f_1-S_n^{(\alpha,\beta)}[f_1]\|_{\infty}={\cal O}(n^{-2\lambda+\max\{0,\beta-\alpha-1\}});\quad \|f_2-S_n^{(\alpha,\beta)}[f_2]\|_{\infty}={\cal O}(n^{-2\lambda+\max\{0,\alpha-\beta-1\}}).
\end{equation*}
\end{theorem}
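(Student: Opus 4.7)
My plan is to restrict attention to $f_1$, since the bound for $f_2$ follows by applying the symmetry $P_n^{(\alpha,\beta)}(-x)=(-1)^{n}P_n^{(\beta,\alpha)}(x)$ to the argument for $f_1$. The central observation is that although \eqref{bd4} yields the satisfactory pointwise bound $a_n^{(\alpha,\beta)}(x;g)=(1-x)^{-1}\mathcal{O}(n^{-2\lambda-\alpha-1})$, the prefactor $(1-x)^{-1}$ degenerates as $x\to 1$, so Theorem \ref{thm1} alone cannot produce an $n^{-2\lambda}$ bound in a right neighbourhood of the singularity of $f_1$. I therefore fix $x_0\in(0,1)$ and treat the two subintervals $[-1,x_0]$ and $[x_0,1]$ by different methods.

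On $[-1,x_0]$ the factor $(1-x)^{-1}$ is uniformly bounded by $(1-x_0)^{-1}$. Substituting \eqref{bd4} together with the Jacobi bound $|P_k^{(\alpha,\beta)}(x)|\le C E_k^{(\alpha,\beta)}(x)$ from Theorem \ref{Thm23} into the pointwise representation \eqref{Jtruerr} gives
\begin{equation*}
|f_1(x)-S_n^{(\alpha,\beta)}[f_1](x)|\le C\,n^{-2\lambda-\alpha-1}\bigl(E_n^{(\alpha,\beta)}(x)+E_{n+1}^{(\alpha,\beta)}(x)\bigr).
\end{equation*}
The assumption $\beta\ge-\tfrac12$ guarantees that the supremum of $E_n^{(\alpha,\beta)}$ over $[-1,x_0]$ is of order $n^{\beta}$ and is attained in the left-endpoint regime $[-1,-1+n^{-2}]$. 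Hence the contribution from this subinterval is $\mathcal{O}(n^{-2\lambda+(\beta-\alpha-1)})$, which is already dominated by the target $\mathcal{O}(n^{-2\lambda+\max\{0,\beta-\alpha-1\}})$.

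On $[x_0,1]$ I discard Theorem \ref{thm1} and work with the tail-series representation
\begin{equation*}
f_1(x)-S_n^{(\alpha,\beta)}[f_1](x)=\sum_{k>n} a_k^{(\alpha,\beta)}(f_1)\,P_k^{(\alpha,\beta)}(x),
\end{equation*}
using the coefficient bound $a_k^{(\alpha,\beta)}(f_1)=\mathcal{O}(k^{-2\lambda-\alpha-1})$ from \cite{Xiang2020}. For each such $x$ I split the summation at the crossover index $k^{*}=\lceil(1-x)^{-1/2}\rceil$. In the range $n<k\le k^{*}$ one has $1-x\le k^{-2}$, so Theorem \ref{Thm23} supplies the right-endpoint estimate $|P_k^{(\alpha,\beta)}(x)|\le Ck^{\alpha}$, contributing $\sum_{k>n} k^{-2\lambda-1}=\mathcal{O}(n^{-2\lambda})$ (using $\lambda>0$). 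In the range $k>\max\{n,k^{*}\}$ one is in the interior regime, so Theorem \ref{Thm23} yields $|P_k^{(\alpha,\beta)}(x)|\le Ck^{-1/2}(1-x)^{-\alpha/2-1/4}$; summing and using $\alpha\ge-\tfrac12$ shows this remainder is at most $C\max\bigl\{(1-x)^{\lambda},\,(1-x)^{-\alpha/2-1/4}n^{-2\lambda-\alpha-1/2}\bigr\}$, both of which collapse to $\mathcal{O}(n^{-2\lambda})$ at the worst-case value $1-x\asymp n^{-2}$. Therefore Region II contributes $\mathcal{O}(n^{-2\lambda})$, and combining with Region I yields the stated bound for $f_1$.

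The main obstacle is the asymmetry between the two ends of the interval: at $x=-1$ the improvement over the crude termwise bound $\sum_{k>n}|a_k|\|P_k^{(\alpha,\beta)}\|_{\infty}$, which would produce the inferior order $n^{-2\lambda+(\beta-\alpha)}$, comes from the cancellation already encoded in Theorem \ref{thm1} via \eqref{bd4}, whereas at $x=1$ this cancellation is obstructed by the singularity of $f_1$ and must be recovered by the crossover split at $k^{*}\sim(1-x)^{-1/2}$. The delicate bookkeeping in that split, and verifying that the constants in each $\mathcal{O}$ are independent of $x$ so that the two halves join at $x_0$, is the most technical part of the argument.
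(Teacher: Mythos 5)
Your argument is correct and takes essentially the same route as the paper: away from the singular endpoint you combine the representation \eqref{Jtruerr} with the uniform coefficient bound \eqref{bd4} and $\|P_n^{(\alpha,\beta)}\|_{L^\infty(-1,0)}={\cal O}(n^{\beta})$, and near $x=1$ you switch to the tail series with $a_k^{(\alpha,\beta)}(f_1)={\cal O}(k^{-2\lambda-\alpha-1})$. The only cosmetic difference is that your per-$x$ crossover split at $k^{*}\sim(1-x)^{-1/2}$ simply re-derives the uniform bound $\|P_k^{(\alpha,\beta)}\|_{L^\infty(0,1)}={\cal O}(k^{\alpha})$ (valid for $\alpha\ge-\tfrac12$ by Theorem \ref{Thm23}), which the paper invokes directly to bound the tail in one step.
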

\begin{proof}
Note  by $\|P_{n}^{(\alpha,\beta)}\|_{[0,1]}:=\max_{0 \le x \le 1} |P_{n}^{(\alpha,\beta)}|={\cal O}(n^{\alpha})$ (see Theorem \ref{Thm23}) that for $f_1$ and $x\in [0,1]$ it yields
$$
e_{f_1}^{(\alpha,\beta)}(n,x)=\Big|\sum_{j=n+1}^{\infty}a^{(\alpha,\beta)}_{j}(x;f_1)P^{(\alpha,\beta)}_{j}(x)\Big|\le C\sum_{j=n+1}^{\infty} j^{-2\lambda-\alpha-1}\|P^{(\alpha,\beta)}_{j}\|_{[0,1]}\le C_1 n^{-2\lambda}
$$
for some constants $C$ and $C_1$ independent of $x\in [0,1]$ and $n$.
While for $x\in[-1,0]$, by \eqref{bd4} and $\|P_n^{(\alpha,\beta)}\|_{[-1,0]}:=\max_{-1 \le x \le 0} |P_n^{(\alpha,\beta)}| ={\cal O}(n^{\beta})$ (see Theorem \ref{Thm23}) and  \eqref{Jtruerr}, it implies
\begin{equation*}\begin{split}
e_{f_1}^{(\alpha,\beta)}(n,x)&=\Big|A_n^{(\alpha, \beta)} {a_{n}^{(\alpha,\beta)}(x;g)P_{n+1}^{(\alpha,\beta)}(x)}- B_n^{(\alpha, \beta)} {a_{n+1}^{(\alpha,\beta)}(x;g)P_n^{(\alpha,\beta)}(x)}\Big|\\
&\le Cn^{-2\lambda-\alpha-1}\|P^{(\alpha,\beta)}_{n}\|_{[-1,0]}\\
&\le C_2 n^{-2\lambda-\alpha+\beta-1}\end{split}
\end{equation*}
for some constants $C_2$ independent of $x\in [0,1]$ and $n$. These together leads to the desired results for $f_1$. Analogously, the bound for $f_2$ is also satisfied.
\end{proof}

 In Fig. \ref{figure61}, we show the maximum error of $S_{n}^{(\alpha,\beta)}[f]$ and $p_{n}^{*}[f]$ as a function of $n$ for two functions $f_{1}(x)=(1-x)^{1/2}$ and $f_{2}(x)=(1+x)^{2/3}$. In order to consider the boundary behaviors around $x=\pm 1$, we also show a weighted maximum error defined in (\ref{PerrJac2C}). Clearly, numerical results are in good agreement with our theoretical estimates in Theorem \ref{ThmB} and Theorem \ref{ThmB2}, which implies as well the optimality of our estimates in the sense that the orders derived can no more be improved.

\begin{figure}[!t]
\centering
\includegraphics[width=.341\textwidth]{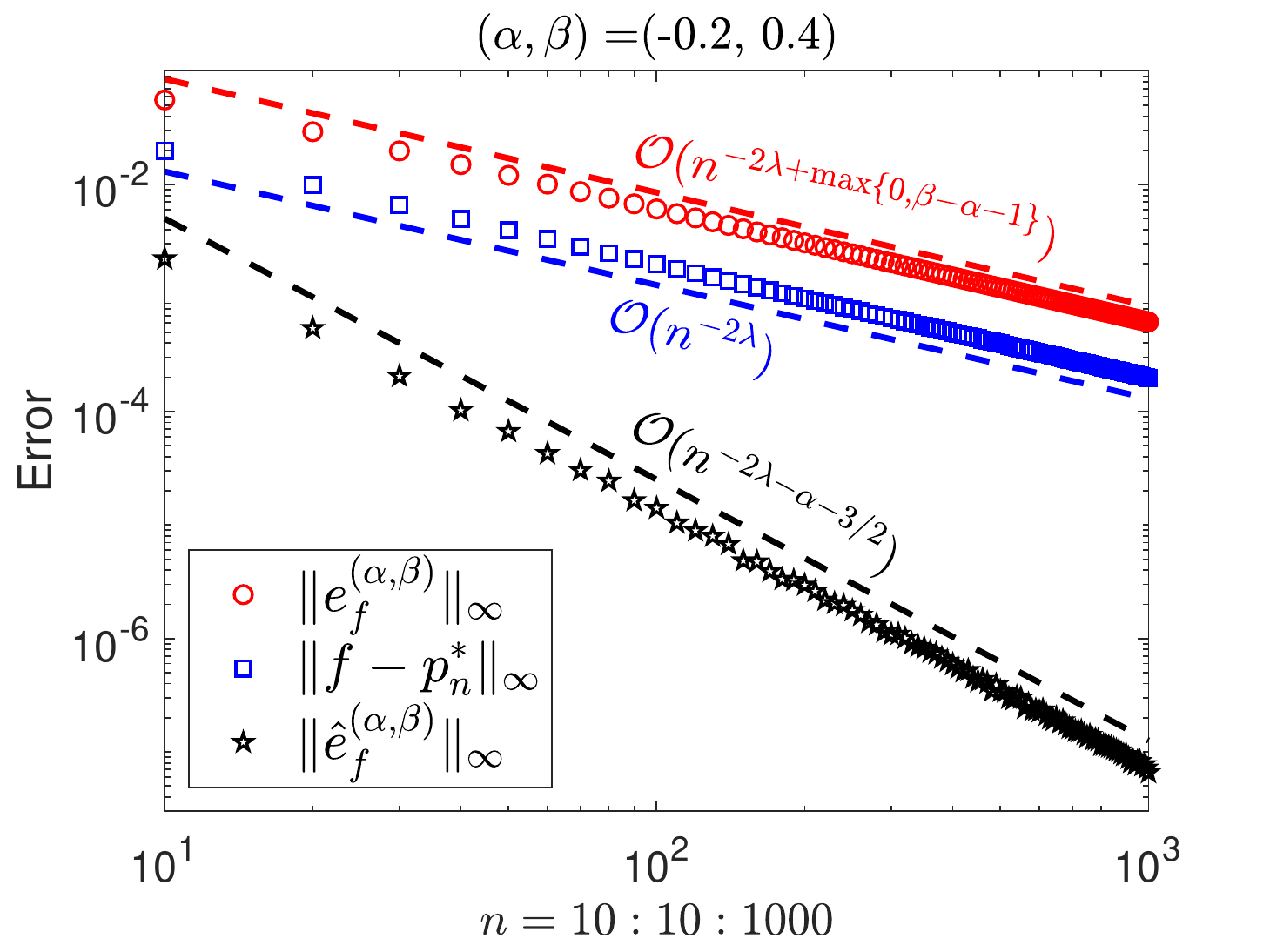}  
\includegraphics[width=.325\textwidth]{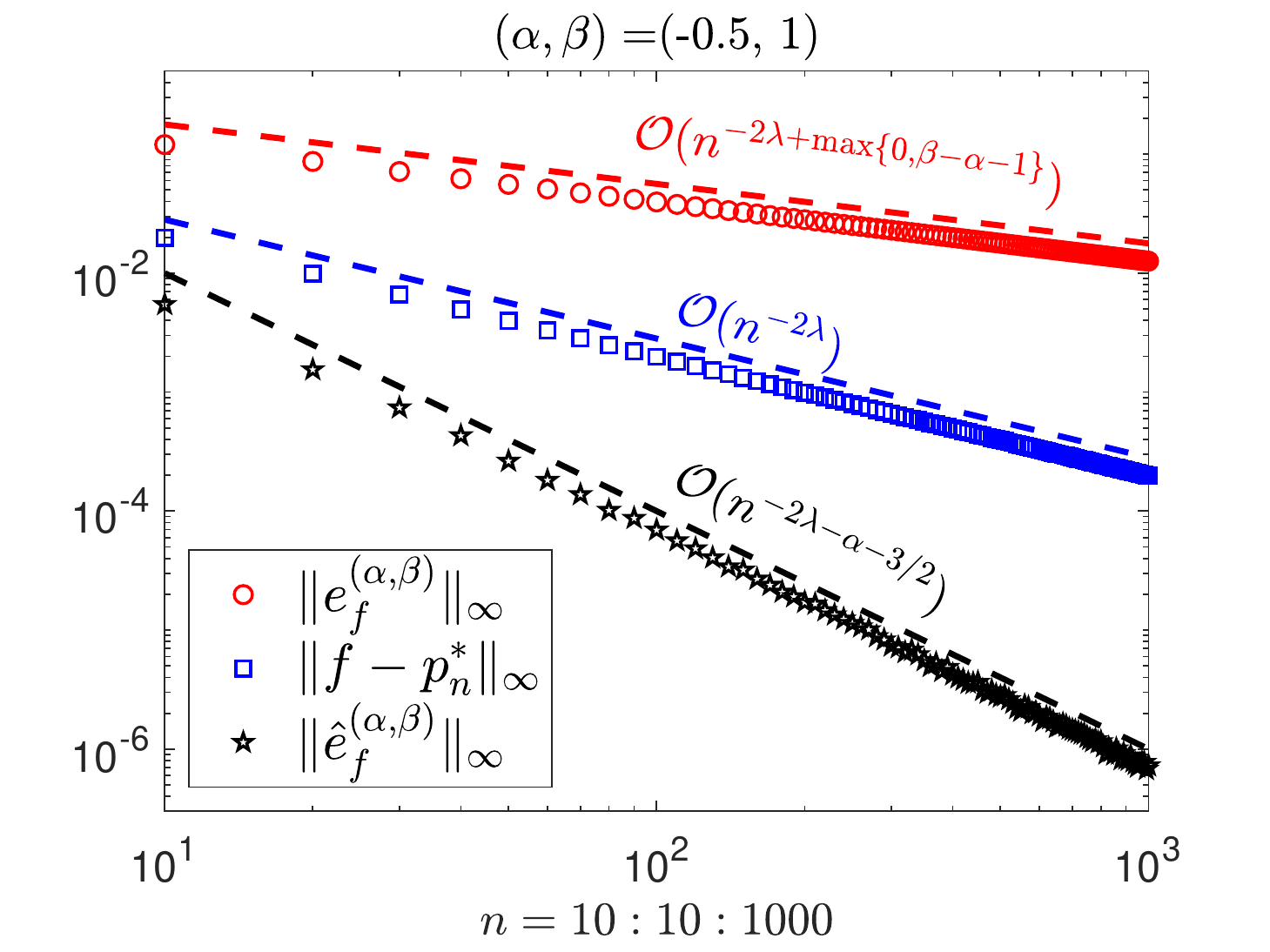}
\includegraphics[width=.325\textwidth]{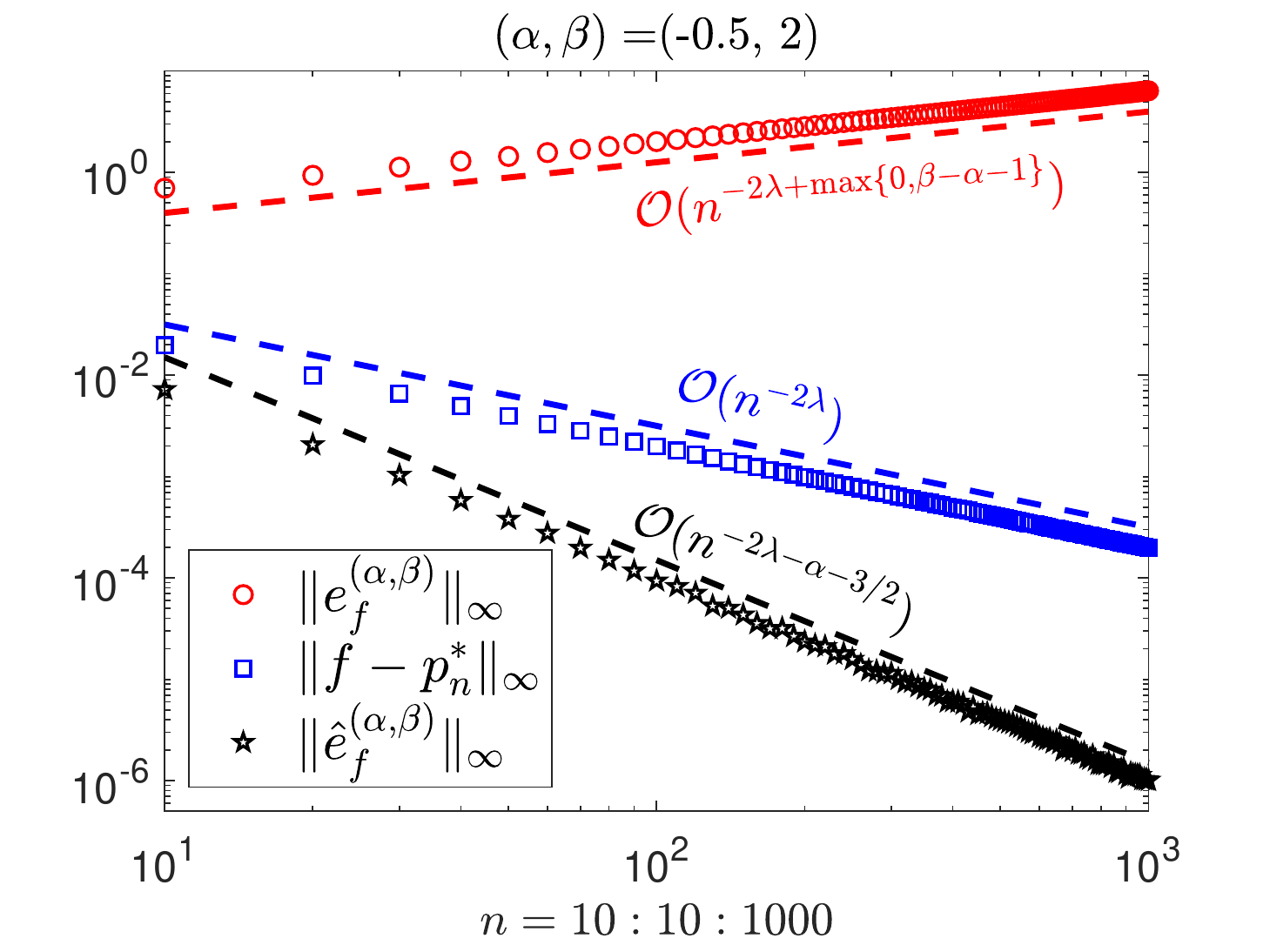} \\[10pt] 
\includegraphics[width=.341\textwidth]{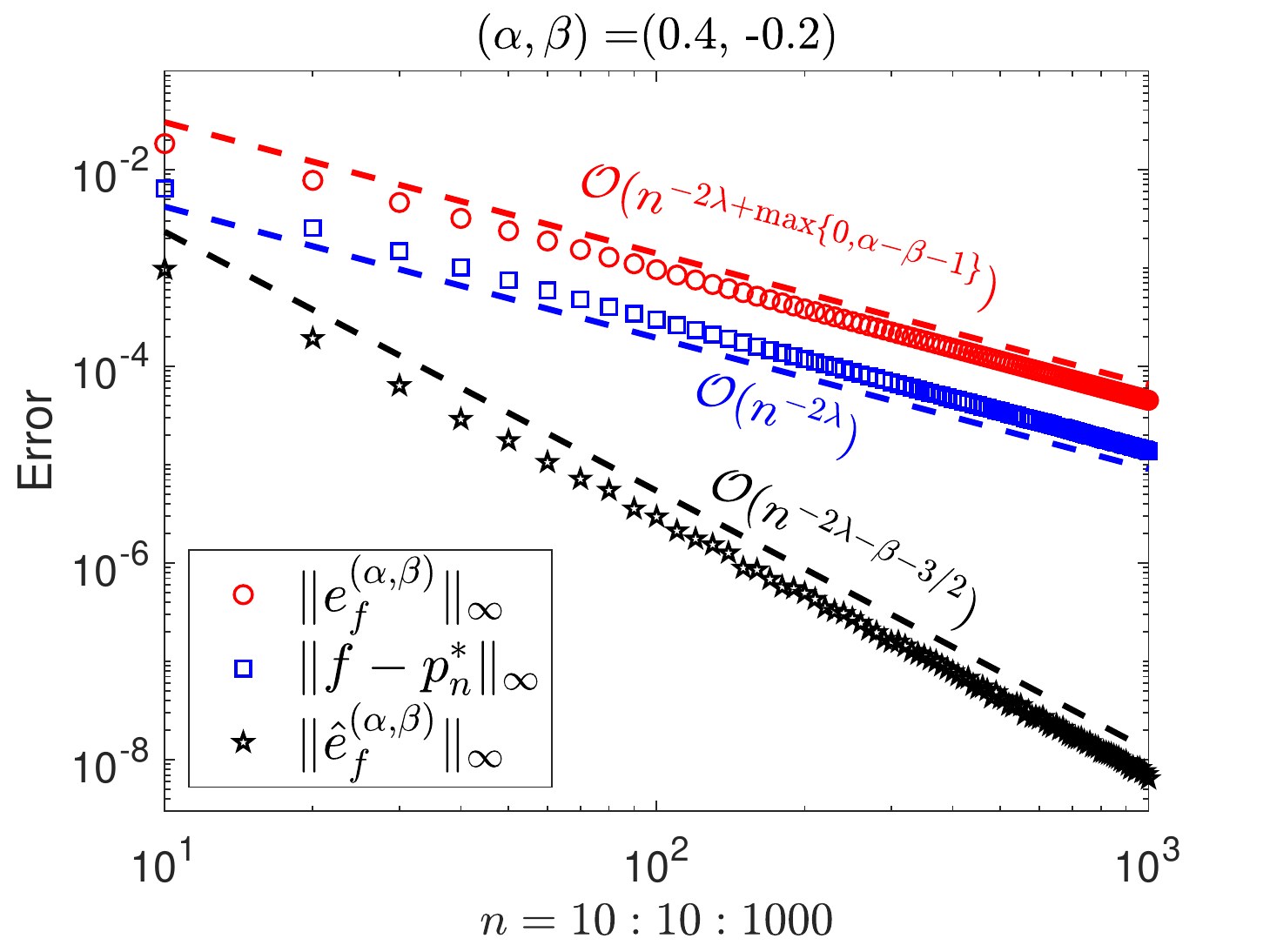}
\includegraphics[width=.325\textwidth]{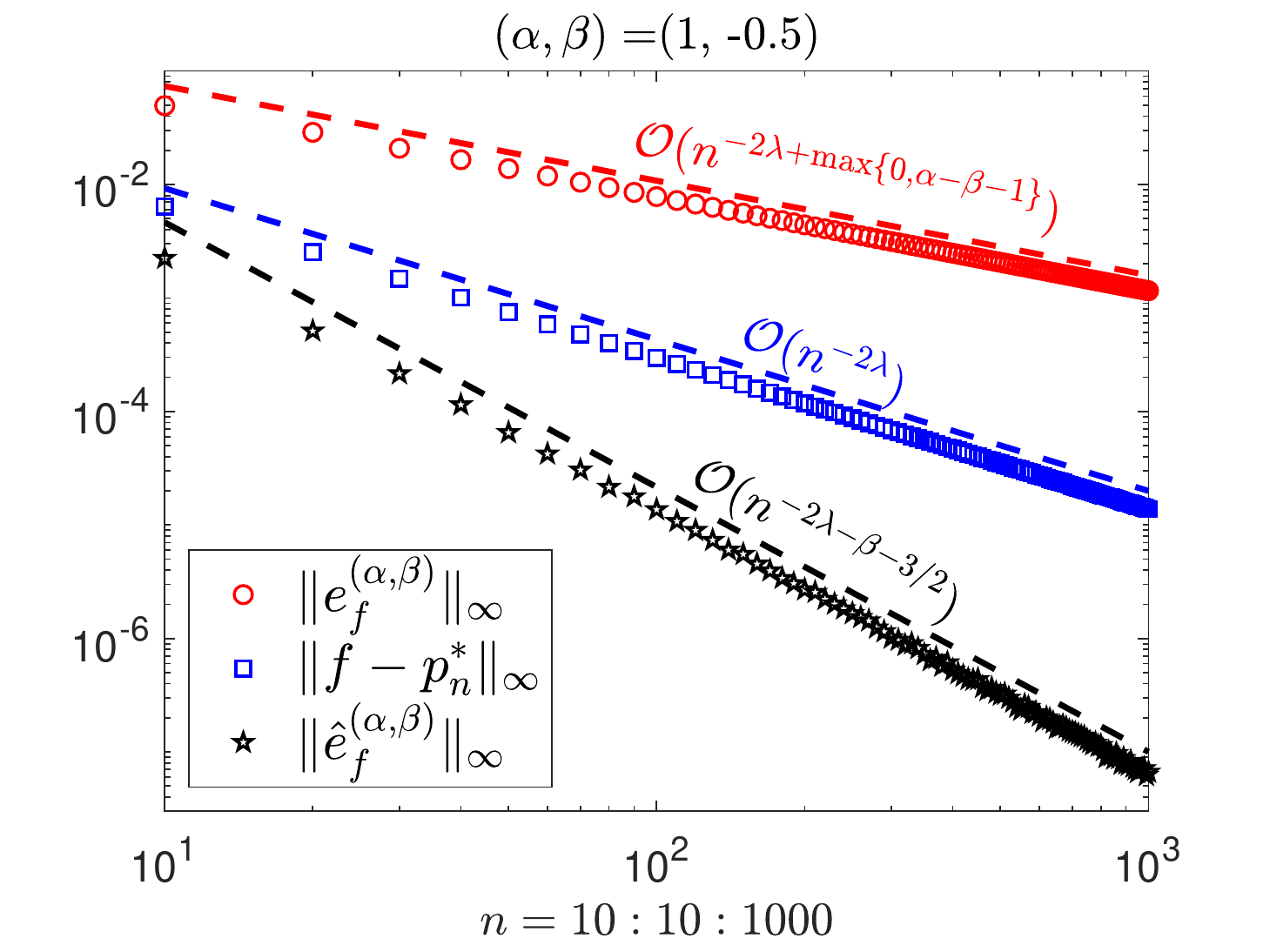}
\includegraphics[width=.325\textwidth]{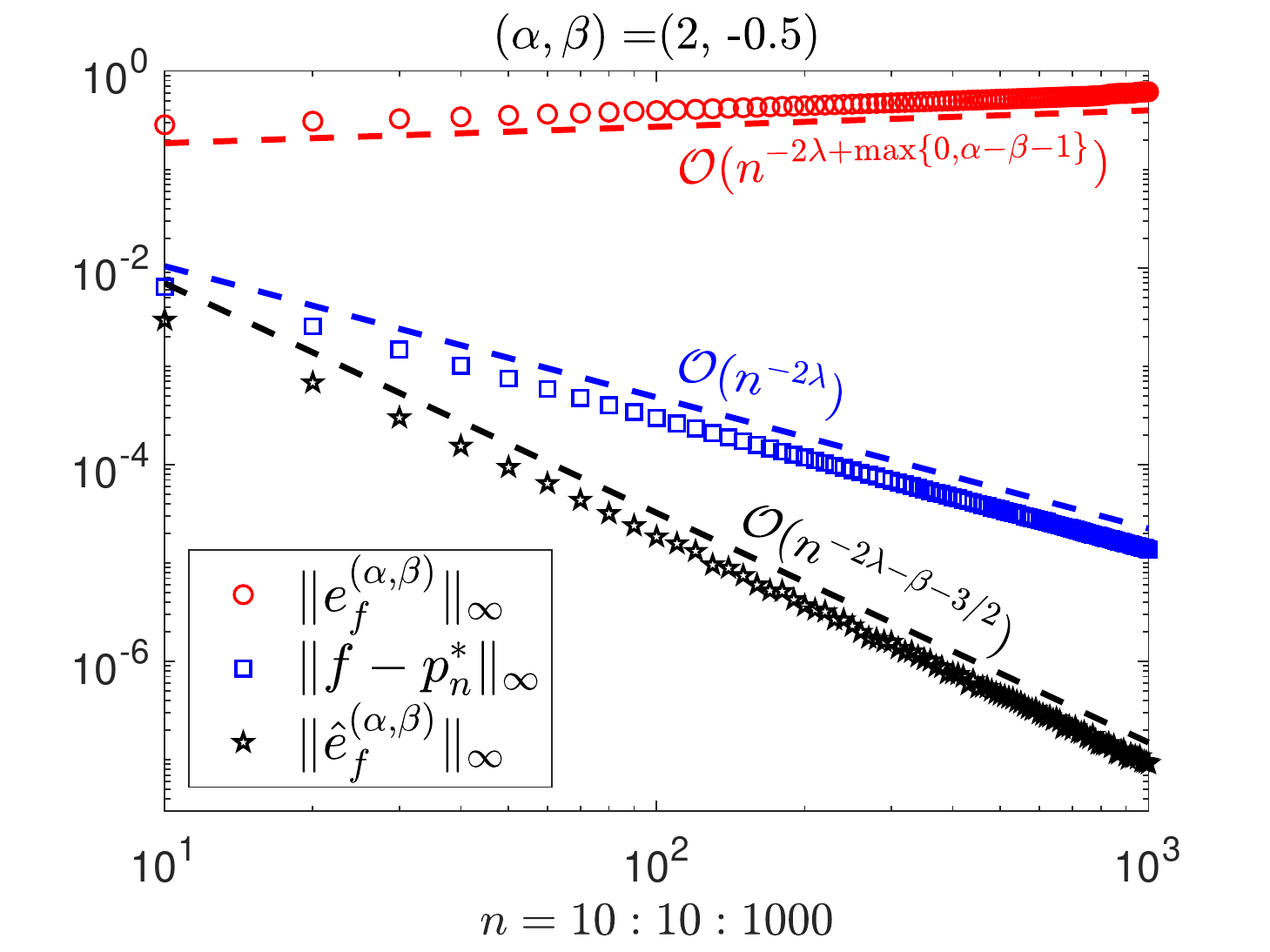}
\caption{Plots of $\|e_{f}^{(\alpha,\beta)}\|_{\infty}$ {\rm(}red circle{\rm)}, $\|f-p_{n}^{*}\|_{\infty}$ {\rm(}blue square{\rm)} and  the weighted maximum error $\|\hat{e}_{f}^{(\alpha,\beta)}\|_{\infty}$ {\rm(}black pentagram{\rm)} for $f_{1}(x)=(1-x)^{1/2}$ {\rm(}first row{\rm)} and $f_{2}(x)=(1+x)^{2/3}$ {\rm(}second row{\rm)}.}\label{figure61}
\end{figure}

\medskip
\begin{remark}
{\rm (i)} The local behaviors  of Legendre series have been extensively studied in  Wahlbin \cite[Theorem 3.3]{Wahlbin},
and was illustrated by numerical examples 6.2.b, 6.3.a-b and 6.4.a-b in \cite{Wahlbin} for some specific $x_0,$
\begin{equation}\label{Wal0}
    H(x)=\begin{dcases}
    0,&-1\le x\le 0,\\
    1,&0< x\le 1\end{dcases} \mbox{ with\quad } e_f(n,\frac{\sqrt{2}}{2})\le C \frac{\ln n}{n},\,\,  e_f(n,1)\le C \frac{\ln n}{\sqrt{n}},
  \end{equation}
or for $x_0$ a ``unit'' distance away from $0$
\begin{subequations}\label{Wal}
\begin{equation}\label{Wala}
    f(x)=|x|^{\frac{1}{2}} \mbox{\quad with\quad } e_f(n,x_0)\le C\sigma_n(x_0)n^{-\frac{3}{2}}\left(\ln n\right)^{\frac{3}{2}},
  \end{equation}
\begin{equation}\label{Walb}
    f(x)=\sqrt{1-x} \mbox{\quad with\quad } e_f(n,x_0)\le C\sigma_n(x_0)n^{-\frac{5}{2}}\left(\ln n\right)^{\frac{5}{2}},
  \end{equation}
\end{subequations}
\begin{subequations}\label{Wal1}
\begin{equation}\label{Wal1a}
    f(x)=|x|^{-\frac{1}{2}} \mbox{\quad with\quad } e_f(n,x_0)\le C\sigma_n(x_0)n^{-\frac{1}{2}}\left(\ln n\right)^{\frac{1}{2}},
  \end{equation}
\begin{equation}\label{Wal1b}
    f(x)=\frac{1}{\sqrt{1-x^2}} \mbox{\quad with\quad } e_f(n,x_0)\le C\sigma_n(x_0)n^{-\frac{1}{2}}\left(\ln n\right)^{\frac{1}{2}},
  \end{equation}
\end{subequations}
where $C$ is a constant independent of $n$, and $\sigma_n(x_0)=\min\{(1-x_0^2)^{-\frac{1}{4}},n^{\frac{1}{2}}\}$ (see \cite{Wahlbin} for more details).

\medskip
 {\rm (ii)} For  $f(x)=(x-a)_+^{\lambda}$, in more recently work by Babu\u{s}ka and  Hakula \cite{Babuska2019}, the above log-term in \eqref{Wal0} is omitted without the assumption $\delta\ge C_4 \frac{\ln n}{n}$ in Theorem 3.3 \cite{Wahlbin} if $\lambda=0$, and the log-term $\left(\ln n\right)^{\frac{3}{2}}$ in \eqref{Wala} and $\left(\ln n\right)^{\frac{1}{2}}$ in \eqref{Wal1a} is replaced by $\ln n$ respectively if $\lambda\not=0$.

\medskip
{\rm (iii)}
Following Wahlbin \cite[Theorem 3.3]{Wahlbin},	from Theorem \ref{Thm23} and  Corollary \ref{Bineua}, we may define $\widetilde{C}(n;x)=|x-a|^{-1}\sigma_n^{(\alpha,\beta)}(x)$ related to $n$ and $x$ instead of $C(x)$ in \eqref{PerrJac100} of Theorem \ref{Thm13}, while for boundary singularities, we define $\widetilde{C}_1(n;x)=(1-x)^{-1}\sigma_n^{(\alpha,\beta)}(x)$ and $\widetilde{C}_2(n;x)=(1+x)^{-1}\sigma_n^{(\alpha,\beta)}(x)$ instead of $C_1(x)$ and $C_2(x)$ in \eqref{PerrJac1B} in Theorem \ref{ThmB} respectively,
where
\begin{equation*}\begin{split}
&\sigma_n^{(\alpha,\beta)}(x)=\begin{dcases}\min\left\{(1-x)^{-\max\{\frac \alpha2+\frac 14,0\}},(n+1)^{\frac12}\right\},&\alpha<-\frac12\\
\min\left\{(1-x)^{-\max\{\frac \alpha2+\frac 14,0\}},(n+1)^{\alpha+\frac12}\right\},&\alpha\ge-\frac12
\end{dcases}\quad x\in [0,1];\\
&\sigma_n^{(\alpha,\beta)}(x)=\begin{dcases}\min\left\{(1+x)^{-\max\{\frac \beta2+\frac 14,0\}},(n+1)^{\frac12}\right\},&\beta<-\frac12\\
\min\left\{(1+x)^{-\max\{\frac \beta2+\frac 14,0\}},(n+1)^{\beta+\frac12}\right\},&\beta\ge-\frac12
\end{dcases}\quad x\in [-1,0]
\end{split}\end{equation*}
satisfied that $\sigma_n^{(0,0)}(x)\sim  \sigma_n(x)$, i.e., with the same order on $x$ and $n$ for Legendre series. Then from Theorems \ref{Thm13} and \ref{ThmB}, all the logarithmic factors in \eqref{Wal0}, \eqref{Wala}, \eqref{Walb}, \eqref{Wal1a} and \eqref{Wal1b} can be removed, improvement of \cite{Wahlbin} in Legendre series.
  As mentioned in Wahlbin \cite{Wahlbin}, these bounds  without
the logarithmic factors are sharp.
\end{remark}

\begin{remark}
From Theorem \ref{ThmB2}, we see that if $\beta-\alpha-1\leq 0$ {\rm(}resp. $\alpha-\beta-1\leq 0${\rm)} for $f_{1}(x)$ {\rm(}resp. $f_{2}(x)${\rm)}, $\|f_i-S_n^{(\alpha,\beta)}[f_i]\|_{\infty}={\cal O}(n^{-2\lambda})$ has the same order as $\|f_i-p_n^*\|_{\infty}$, $(i=1,2)$.
 \end{remark}


\begin{appendices}
\section{Proof of Case (iii) in Theorem \ref{Thm32}}\label{AppendixA}
  In Section 3, we presented  detailed proofs of (\ref{coe1}) for $x<a$ and $x=a$, respectively. In the following, we sketch the proof for Case (iii).

\medskip
  Case (iii) $x>a$: A routine computation from (\ref{Jcoe}) gives rise to
\begin{equation}\label{case31}
  \begin{split}
	&a_{n}^{(\alpha,\beta)}(x;g)\\
	=&\frac{1}{\sigma_{n}^{(\alpha,\beta)}}\left[\int_{-1}^{a}\!\frac{z(x)(a-y)^{\lambda}}{x-y}P_{n}^{(\alpha,\beta)}(y)\omega^{(\alpha,\beta)}(y)\,\mathrm{d}y
+\int_{a}^{1}\!\omega^{(\alpha,\beta)}(y)\frac{z(x)-z(y)}{x-y}(y-a)^{\lambda}P_{n}^{(\alpha,\beta)}(y)\,\mathrm{d}y\right.\\
	&\qquad\quad\left.+\int_{-1}^{1}\!\frac{(x-a)^{\lambda}-|y-a|^{\lambda}}{x-y}z(x)P_{n}^{(\alpha,\beta)}(y)\omega^{(\alpha,\beta)}(y)\,\mathrm{d}y\right].
\end{split}
\end{equation}
Similar to the proof in case (i), it is not difficult to verify that the first term in the right hand side of \eqref{case31} satisfies \eqref{coe1} and \eqref{coe11}. Since $z(x)$ is smooth on $[-1,1]$, and for any integer $k$,
 $ \partial_{y}^{k}\Big(\frac{z(x)-z(y)}{x-y}\Big)$
  is uniformly bounded by a constant independent of $x$. As a result, the second term in \eqref{case31} satisfies \eqref{coe1}-\eqref{coe11} as well by Lemma \ref{JEC}.
While for the third term, recalling the definition of $m$ ($m=\lambda-1$ if $\lambda$ is an integer, otherwise $m=\lfloor\lambda\rfloor$), and applying the Rodrigues' formula (\ref{Rodrigues}), we have
\begin{equation}\label{thirdpart}
\begin{split} &\int_{-1}^{1}\!\frac{(x-a)^{\lambda}-|y-a|^{\lambda}}{x-y}z(x)P_{n}^{(\alpha,\beta)}(y)\omega^{(\alpha,\beta)}(y)\,\mathrm{d}y\\
=&\frac{z(x)}{2^{m+1}(n)_{m+1}\sigma_{n}^{(\alpha,\beta)}} \\
&\quad \times \left\{\int_{-1}^{a}\!(a-y)^{\lambda-m-1}P_{n-m-1}^{(\alpha+m+1,\beta+m+1)}(y)\psi_{1}(x,y)\omega^{(\alpha+m+1,\beta+m+1)}(y)\,\mathrm{d}y\right.\\
	&\quad +\left.\int_{a}^{1}\!(y-a)^{\lambda-m-1}P_{n-m-1}^{(\alpha+m+1,\beta+m+1)}(y)\psi_{2}(x,y)\omega^{(\alpha+m+1,\beta+m+1)}(y)\,\mathrm{d}y\right\},
	\end{split}
\end{equation}
where 
\begin{equation*}
\begin{split} &{\displaystyle\psi_{1}(x,y)=(a-y)^{m+1-\lambda}\partial_{y}^{m+1}\Big(\frac{(x-a)^{\lambda}-(a-y)^{\lambda}}{x-y}\Big),\;\;\quad y\in[-1,a]},\\
	&{\displaystyle\psi_{2}(x,y)=(y-a)^{m+1-\lambda}\partial_{y}^{m+1}\Big(\frac{(x-a)^{\lambda}-(y-a)^{\lambda}}{x-y}\Big),\;\; \quad y\in[a,1]},\\
\end{split}
\end{equation*}
and smooth for $x>a$, and $-1\le y \le \frac {-1+a}{2}$ (resp. $\frac {1+a}{2}\le y \le 1$).
Similar to \eqref{g1}, $\psi_1(x,y)$ can be written as
\begin{equation*}
\psi_{1}(x,y)=\frac{(m+1)!} {x-y}\bigg[\frac{(x-a)^{\lambda}-(a-y)^{\lambda}}{(x-y)^{\lambda}}\Big(\frac{a-y}{x-y}\Big)^{m+1-\lambda}
+\sum_{k=1}^{m+1}\frac{(-1)^{m+1}(\lambda)_{k}}{k!}\Big(\frac{a-y}{x-y}\Big)^{m+1-k}\bigg]\\
\end{equation*}
and satisfies
\begin{equation*}
  |\psi_{1}(x,y)|\leq\frac{C_{1}}{x-y}, \quad
  |\partial_{y}\psi_{1}(x,y)|\leq\frac{C_{2}}{(x-y)^{2}},\;\; \forall\, y \in [(-1+a)/2,a],
\end{equation*}
for some constants $C_{1}$ and $C_{2}$ independent of $x$. Thus we have  $\psi_{1}(x,\cdot) \in W_{\rm AC}(\frac{-1+a}{2},a)$  and $\|\psi_{1}(x,\cdot)\|_{W^{1}_{\rm AC}(-1,\frac{-1+a}{2})}$ is uniformly bounded for all  $x$, which, together with  Lemma \ref{JEC},  leads to the desired result.

Now we turn to $\psi_{2}(x,y)$, after a calculation by the Leibniz's formula, leads to
\begin{equation}\label{A4a}
	\psi_{2}(x,y)=(m+1)!\, (y-a)^{m+1-\lambda}\frac{h(x,y)}{(x-y)^{m+2}},
\end{equation}
where
\begin{equation*}\label{A4ah}
  h(x,y)=(x-a)^{\lambda}-\sum_{k=0}^{m+1}\frac{(\lambda)_{k}}{k!}(y-a)^{\lambda-k}(x-y)^{k}.
\end{equation*}
 Similar to \eqref{Taylor}, we have for some $\xi$ between $x$ and $y$ that
\begin{equation}\label{sgnA4a}
 \frac{h(x,y)}{(x-y)^{m+2}}
 =\frac{(x-a)^{\lambda}-(y-a)^{\lambda}-\sum_{k=1}^{m+1}\frac{((y-a)^{\lambda})^{(k)}}{k!}(x-y)^{k}}{(x-y)^{m+2}} =\frac{(\lambda)_{m+2}(\xi-a)^{\lambda-m-2}}{(m+2)!}
 <0.
\end{equation}
Moreover, we find that $\psi_{2}(x,y)$ is monotonically increasing w.r.t $y$ (see the proof at the end of this section and numerical illustration in Fig. \ref{figureA1}). Therefore, we get from (\ref{A4a}) and (\ref{sgnA4a}) by letting $y\rightarrow a$ that
 \begin{equation*}\label{}
  \max_{y\in[a,\frac{1+a}{2}]}|\psi_{2}(x,y)|=|\psi_{2}(x,a)|=(\lambda)_{m+1}(x-a)^{-1},
\end{equation*}
 and
 \begin{equation*}\label{}
  \int_{a}^{\frac{1+a}{2}}\!\left|\partial_{y}\psi_{2}(x,y)\right|\mathrm{d}y=\left|\psi_{2}\Big(x,\frac{1+a}{2}\Big)-\psi_{2}(x,a)\right|\leq 2(\lambda)_{m+1}(x-a)^{-1}.
\end{equation*}
So the second integral in the right hand side of (\ref{thirdpart}) also satisfies (\ref{coe1}) as $\psi_{2}(x,\cdot)\in W_{\rm AC}(a,\frac{1+a}{2})$  and $\|\psi_{2}(x,\cdot)\|_{W^{1}_{\rm AC}(\frac{1+a}{2},1)}$ is uniformly bounded for all $x$. To sum up all the results above, we complete the proof of (\ref{coe1}).

In order to obtain the uniformly estimate (\ref{coe11}), we conduct integration by parts till $m$ instead of $m+1$. The proof is analogous to that in Case (i) and omitted here.

We end this section after providing a rigorous proof of the monotonicity of $\psi_{2}(x,y)$ with respect to $y$. For any fixed $x\in(a,1)$, we have
\begin{equation*}\label{diffpsi2}
  \partial_{y}\psi_{2}(x,y)=(m+1)!\frac{u(y)}{(x-y)^{m+3}},
\end{equation*}
where 
\begin{equation*}
  u(y)=(x-y)\partial_{y}\left((y-a)^{m+1-\lambda}h(x,y)\right)+(m+2)(y-a)^{m+1-\lambda}h(x,y).
\end{equation*}
Interestingly, we can show that $u^{(k)}(x)=0$ for any $k\in\left\{0,1,\cdots,m+1\right\}$, that is,
\begin{equation*}
  \begin{aligned}
    & u^{(k)}(x)=(m+2-k)\partial_{y}^{k}\left((y-a)^{m+1-\lambda}h(x,y)\right)\Big|_{y=x}\\
	&=(m+2-k)\partial_{y}^{k}
      \Big[(x-a)^{\lambda}(y-a)^{m+1-\lambda}
      -\sum_{j=0}^{m+1}\frac{(\lambda)_{j}}{j!}(y-a)^{m+1-j}(x-y)^{j}\Big]_{y=x}\\
    &=(m+2-k)\Big[(x-a)^{\lambda}(m+1-\lambda)_{k}(y-a)^{m+1-\lambda-k}\\
    &\qquad-\sum_{j=0}^{k}\frac{(\lambda)_{j}}{j!}\binom{k}{j}((y-a)^{m+1-j})^{(k-j)}((x-y)^{j})^{(j)}\Big]_{y=x}\\
	&=(m+2-k)(x-a)^{m+1-k}\Big[(m+1-\lambda)_{k}-\sum_{j=0}^{k}\binom{k}{j}(-1)^{j}(\lambda)_{j}(m+1-j)_{k-j}\Big]\\
	&=0,
	\end{aligned}
\end{equation*}
and further
\begin{equation*}
  \begin{aligned}
	 u^{(m+2)}(y)
    &=(x-y)\partial_{y}^{m+3}\big((y-a)^{m+1-\lambda}h(x,y)\big)\\
    &=(x-y)\partial_{y}^{m+3}\Big((x-a)^{\lambda}(y-a)^{m+1-\lambda}-\sum_{j=0}^{m+1}\frac{(\lambda)_{j}}{j!}(y-a)^{m+1-j}(x-y)^{j}\Big)\\
	&=(m+1-\lambda)_{m+3}(x-y)(x-a)^{\lambda}(y-a)^{-\lambda-2}.
  \end{aligned}
\end{equation*}
 Obviously, it follows from $0\leq m+1-\lambda<1$ that
 $ \mathrm{sgn}\big((-1)^{m+2}u^{(m+2)}(y)\big)=\mathrm{sgn}(x-y).$

As a consequence, it deduces from Taylor's theorem that
\begin{equation}\label{monoton}
  \begin{aligned}
    \partial_{y}\psi_{2}(x,y)
    &=\frac{(m+1)!}{(x-y)^{m+3}}\Big(\sum_{k=0}^{m+1}\frac{u^{(k)}(x)}{k!}(y-x)^{k}+\frac{u^{(m+2)}(\xi)}{(m+2)!}(y-x)^{m+2}\Big)\\
    &=\frac{(-1)^{m+2}u^{(m+2)}(\xi)}{(m+2)(x-y)}\geq 0,\\
  \end{aligned}
\end{equation}
where $\xi=x+\eta(y-x)$ and $0<\eta<1$. While if $y=x$, we have
\begin{equation*}
  \begin{aligned}
  \partial_{y}\psi_{2}(x,x) 
  &=\lim_{y\to x}\partial_y\psi_2(x,y)
  = \lim_{y\to x} \Big\{ (m+1)! \frac{ u(y)}{(x-y)^{m+3}} \Big\} \\
  &=\frac{(-1)^{m}(m+1-\lambda)_{m+3}}{(m+3)(m+2)} (x-a)^{-2} \geq0,
  \end{aligned}
\end{equation*}
which together with (\ref{monoton}), complete the proof.
\begin{figure}[hpbt]
\centering{\includegraphics[height=4.5cm,width=12cm]{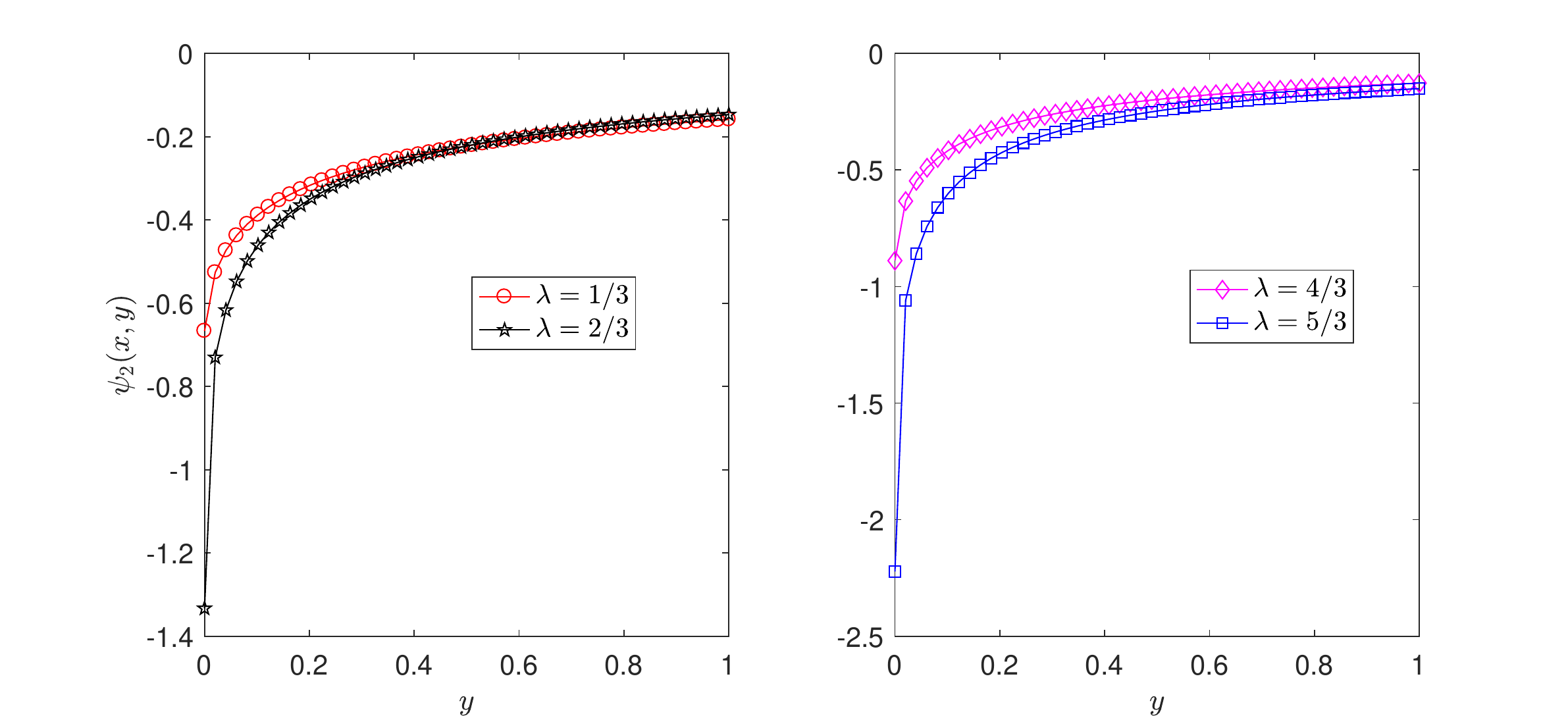}}
\caption{The monotonicity of $\psi_{2}(x,y)$ with respect to the argument $y$ on $[a,1]$, where $a=0$ and $x=1/2$.}\label{figureA1}
\end{figure}

\section*{Acknowledgement}
The authors would like to thank the anonymous referees for the valuable comments and bringing several references to our attention, which led to significant improvement for this work. We also appreciate the referee for providing us useful details for the proof of Lemma 3.2 in the report, which indeed completed its proof.
The work  is  partially supported by the  NSF of China (No.12271528, No.12001280) for the first three authors;
the Fundamental Research Funds for the Central Universities of Central South University (No. 2020zzts031) for the second author;  the Natural Science Foundation of the Jiangsu Higher Education Institutions of China (No. 20KJB110012) and Natural Science Foundation of Jiangsu Province (No. BK20211293) for the third author;  and   Singapore MOE AcRF Tier 1 Grant: RG15/21 for the last author.
\end{appendices}


\begin{thebibliography}{10}

\bibitem{Abram}
{M. Abramowitz and I.~A. Stegun}, \emph{Handbook of Mathematical Functions}, National Bureau of Standards, Washington, D.C., 1964.

\bibitem{BaGuo1}
{I. Babu\u{s}ka and B.~Guo},  \emph{Optimal estimates for lower and upper bounds of approximation errors in the p-version of the finite element method in two dimensions}, Numer. Math., 85 (2000), pp. 219--255.

\bibitem{BaGuo2}
{I. Babu\u{s}ka and B. Guo}, \emph{Direct and inverse approximation theorems for the p-version of the finite element method in the framework of weighted Besov spaces. Part I: Approximability of functions in the weighted Besov spaces}, SIAM J. Numer. Anal., 39 (2001), pp. 1512--1538.

\bibitem{BaGuo3}
{I. Babu\u{s}ka and B. Guo}, \emph{Direct and inverse approximation theorems for the p-version of the finite element method in the framework of weighted Besov spaces. Part II: Optimal rate of convergence of the p-version finite element solutions}, Math. Models Methods Appl. Sci., 12 (2002), pp. 689--719.

\bibitem{Babuska2019}
{I. Babu\u{s}ka and H. Hakula}, \emph{Pointwise error estimate of the Legendre expansion: The known and unknown features}, Comput.  Methods Appl. Mech. Engrg., 345 (2019), pp. 748--773.

\bibitem{Bary}
{N. K. Bary}, \emph{A Treatise on Trigonometric Series}, Vol. 2, Macmillan, 1964.

\bibitem{Bernstein1}
{S. Bernstein}, \emph{On the best approximation of $|x-c|^{p}$}, Doll. Akad. Nauk SSSR, 18 (1938), pp. 374-384.

\bibitem{Bernstein2}
{S. Bernstein}, \emph{Sur l' les polynomes orthogonaux relatifs \`{a} un segment fini},
J. Math. Memories de l'. Academie Royale de Belgique, 9 (1930), pp. 127--177; 10 (1931), pp. 219--286.


\bibitem{CSYZ}		
{W.~Cao, Q. Shu, Y. Yang and Z. Zhang}, \emph{Superconvergence of discontinuous Galerkin methods for two-dimensional hyperbolic equations}, SIAM J. Numer. Anal., 53 (2015), pp. 1651--1671.

 \bibitem{CSYZ2}	
{W.~Cao, Q. Shu, Y. Yang and Z. Zhang}, \emph{Superconvergence of discontinuous Galerkin method for scalar nonlinear hyperbolic equations}, SIAM J. Numer. Anal., 56 (2018),  pp. 732--765.

\bibitem{CZZ}	
{W.~Cao, Z. Zhang and Q. Zou}, \emph{Superconvergence of discontinuous Galerkin methods for linear hyperbolic equations}, SIAM J. Numer. Anal., 52 (2014), pp. 2555--2573.


 \bibitem{CCS}
{P. Castillo, B. Cockburn, D. Sch\"{o}tzau and C. Schwab}, \emph{Optimal a priori error estimates for the hp-version of the local discontinuous Galerkin method for convection-diffusion problems}, Math. Comp., 71 (2002), pp. 238, 455--478.


\bibitem{Darboux}
{G.~Darboux}, \emph{M\'{e}moire sur l'approximation des fonctions de tr\`{e}s-grands nombres et sur une classe \'{e}tendue de d\'{e}veloppements en s\'{e}rie}, J. Math. Purer Appl., 4 (1978), pp. 5--56, 377--416.

\bibitem{Forster}
{F.~F\"{o}rster}, \emph{Inequalities for ultraspherical polynomials and application to quadrature}, J. Comput. Appl. Math., 49 (1993), pp. 59--70.

\bibitem{Hesthaven}
{J. Hesthaven, S. Gottlieb and D. Gottlieb}, \emph{Spectral Methods for Time-Dependent Problems}, Cambridge University Press, 2007.

\bibitem{Kruglov}
{E. Kruglov}, \emph{Pointwise convergence of Jacobi polynomials}, Master's Thesis, Aalto University, 2018.

\bibitem{LZ}
{R. Lin and Z. Zhang}, \emph{Natural superconvergence points in three-dimensional finite elements}, SIAM J. Numer. Anal., 46 (2008),  pp. 1281--1297.

\bibitem{LW}
{W. Liu and L. Wang}, \emph{Asymptotics of generalized Gegenbauer functions of fractional degree},  J. Appr. Theory, 253 (2020), 105378.


\bibitem{LWL}
{W. Liu, L. Wang and H. Li},
\emph{Optimal error estimates for Chebyshev approximations of functions with limited regularity in fractional Sobolev-type spaces}, Math. Comp., 88 (2019), pp. 2857--2895.

\bibitem{LWW}
{W. Liu, L. Wang and B. Wu}, \emph{
Optimal error estimates for Legendre approximation of singular functions with limited regularity},   Adv. Comput. Math., (2021), pp. 47--79. 

\bibitem{Lubinsky}
{D. S. Lubinsky}, \emph{A new approach to universality limits involving orthogonal polynomials}, Ann. Math., 170 (2009), pp. 915--939.

\bibitem{Muckenhoupt}
{B. Muckenhoupt}, \emph{Transplantation theorems and multiplier theorems}, Mem. Am. Math. Soc., 64 (1986), 356.

\bibitem{Olver}
{F.~W.~J. Olver, D.~W. Lozier, R.~F. Boisvert and C.~W. Clark}, \emph{NIST Handbook of Mathematical Functions}, Cambridge University Press, 2010.

\bibitem{st}
{E. Saff and  V. Totik}, \emph{ Polynomial approximation of piecewise analytic-functions}, J. Lond. Math. Soc., 39 (1989), pp. 487--498.

\bibitem{STW}
{J. Shen, T. Tang and L. Wang}, \emph{Spectral Methods: Algorithms, Analysis and Applications}, Springer, Berlin, 2011.

\bibitem{Stein}
{E. Stein}, \emph{Harmonic Analysis: Real-Variable Methods, Orthogonality, and Oscillatory Integrals}, Princeton University Press, Princeton, 1993.

\bibitem{Stein2003}
{E. Stein and R. Shakarchi}, \emph{Real Analysis: Measure Theory, Integration, and Hilbert Spaces},
Princeton University Press, Princeton, NJ, 2005

\bibitem{Szego}
{G.  Szeg\"{o}}, \emph{Orthogonal Polynomial}, Academic
Mathematical Society, 1939.

\bibitem{Tao}
{T. Tao}, \emph{An Introduction to Measure Theory}, AMS, Providence, RI, 2011.

\bibitem{Trefethen}
{L.~N. Trefethen}, \emph{Six myths of polynomial interpolation and quadrature}, Maths. Today, 47 (2011), pp. 184--188.


\bibitem{Trefethen1}
{\sc L.~N. Trefethen}, \emph{Approximation Theory and Approximation Practice}, SIAM, Philadelphia, 2013.


\bibitem{Tuan}
P. D. Tuan and D. Elliott, \emph{Coefficients in series expansions for certain classes of functions}, Math. Comp., 26 (1972), pp. 213--232.


\bibitem{Wahlbin}
{L. B. Wahlbin}, \emph{A comparison of the local behavior of spline $L^2$-projections, Fourier series and Legendre series}, in: P. Grisvard,W.Wendland, J. Whiteman (Eds.), Singularities and Constructive Methods for their Treatment, in: Lecture Notes in Mathematics, vol. 1121, Springer Berlin Heidelberg, 1985, pp. 319--346.

\bibitem{Wang}
{H. Wang}, \emph{How fast does the best polynomial approximation converge than Legendre projection}? Numer. Math., 147 (2021),  pp. 481--503.

\bibitem{Wang1}
{H. Wang}, \emph{On the optimal estimates and comparison of Gegenbauer expansion coefficients}, SIAM, J. Numer. Anal., 34 (2016), pp. 1557--1580.

\bibitem{Xiang2020}
{S. Xiang}, \emph{Convergence rates on spectral orthogonal projection approximation for functions of algebraic and logarithmatic regularities}, SIAM J. Numer. Anal., 59 (2021), pp. 1374--1398.


\bibitem{XiangLiu}
{S. Xiang and G. Liu}, \emph{Optimal decay rates on the asymptotics of orthogonal polynomial expansions for functions of limited regularities}, Numer. Math., 145 (2020), pp. 117--148.



\bibitem{Zhang}	
{Z.  Zhang}, \emph{Superconvergence points of polynomial spectral interpolation}, SIAM J. Numer. Anal., 50 (2012),  pp. 2966--2985.

\bibitem{ZhangZ}
{Z.  Zhang and A. Naga}, \emph{A new finite element gradient recovery method: superconvergence property}, SIAM J. Sci. Comput., 26 (2005),  pp. 1192--1213.

\bibitem{ZZ}
{X.  Zhao and Z. Zhang}, \emph{Superconvergence points of fractional spectral interpolation}, SIAM J. Sci. Comput., 38 (2016),  pp. A598--A613.


\end{thebibliography}
\end{document}